\documentclass[12pt, a4paper, leqno]{amsart}
\usepackage[utf8]{inputenc}
\usepackage{amsmath}
\usepackage{amsfonts}
\usepackage{amssymb}
\usepackage{times}
\usepackage{enumitem}
\usepackage[T1]{fontenc} 
\usepackage{url} 
\usepackage{color,esint}
\usepackage[colorlinks, linkcolor=blue, citecolor=blue]{hyperref} 
\usepackage{graphicx} 
\usepackage{enumitem}
\usepackage{kotex}
\usepackage{tabularx}
    \usepackage{mathtools}
\usepackage{comment}

\let\oldmarginpar\marginpar
\renewcommand\marginpar[1]{\-\oldmarginpar[\raggedleft\footnotesize #1]%
{\raggedright\footnotesize #1}}

\usepackage{amsthm}
\theoremstyle{plain}
\newtheorem{thm}[equation]{Theorem}
\newtheorem{lem}[equation]{Lemma}
\newtheorem{prop}[equation]{Proposition}

\theoremstyle{definition}

\theoremstyle{remark}
\newtheorem{rem}[equation]{Remark}

\numberwithin{equation}{section}

\newcommand{\R}{\mathbb{R}}

\renewcommand{\div}{\divop}
\def\le{\leqslant}
\def\leq{\leqslant}
\def\ge{\geqslant}
\def\geq{\geqslant}
\def\phi{\varphi}
\def\rho{\varrho}
\def\vartheta{\theta}

\def\div{\qopname\relax o{div}}

\date{\today}

\begin{document}

\title[]{{Double phase quasiconvex functionals and their partial regularity theory}
}

\author{Sunwoo Jeong}
  \address{Sunwoo Jeong, Department of Mathematics, Institute for Mathematical and Data Science, Sogang University, Seoul 04107, Republic of Korea}
\email{sunwoo@sogang.ac.kr}

\author{Jihoon Ok}
\address{Jihoon Ok, Department of Mathematics, Institute for Mathematical and Data Science, Sogang University, Seoul 04107, Republic of Korea}
\email{\texttt{jihoonok@sogang.ac.kr}}

\thanks{MSC(2020) 35J50, 35J92, 35B65, 46E30}

\subjclass[2020]{} 
\keywords{Double phase; Partial regularity; Harmonic approximation; Quasiconvex}

\begin{abstract} 
We consider degenerate nonautonomous energies
$$
\int_\Omega f(x, Dv)\, dx,
$$
for vector-valued functions $v \in W^{1,1}(\Omega, \mathbb{R}^N)$, where the integrand $f(x,P)$ satisfies growth and weak uniform   quasiconvexity assumption associated with the double phase function $H(x,t)=t^p + a(x)t^q$. We establish partial Hölder regularity for the gradients of minimizers  under suitable, and possibly minimal, regularity assumptions on $H$ and $f$. Our approach relies on two approximation results: $\mathcal{A}$-harmonic approximation and a variational version of the $\phi$-harmonic approximation.
\end{abstract}

\maketitle


\section{Introduction}

The aim of this paper is to study the regularity theory of minimizers of the functional
\begin{equation}\label{F}
\mathcal{F}(v) = \int_\Omega f(x, Dv(x))\, dx,
\end{equation}
where $\Omega$ is a bounded open subset of $\mathbb{R}^n$ with $n \ge 2$. The integrand $f$ is a possibly degenerate Carathéodory function exhibiting non-standard growth and quasiconvexity. The admissible functions $v$ belong to $W^{1,1}(\Omega, \mathbb{R}^N)$ with $N \ge 1$, for which the energy $\mathcal{F}(v)$ is well defined. In particular, we focus on integrands with double phase type growth and quasiconvexity assumptions.

The development of partial regularity theory for nonlinear elliptic systems and variational integrals has its origins in the methods of Geometric Measure Theory. These methods were first introduced by De Giorgi and Almgren in their studies of minimal surfaces and minimizing varifolds, and later advanced through the classical works of Morrey, Giusti, and Miranda \cite{M52,M67,GM68}. After a few decades, Duzaar and Steffen \cite{DS02} provided a more direct proof of regularity for minimizers of elliptic integrals in Geometric Measure Theory, based on the $\mathcal{A}$-harmonic approximation method. This approach, which traces back to the ideas of De Giorgi and was later developed further by Simon \cite{S83,S96}, simplifies many technical arguments and allows one to obtain optimal regularity results, including those for boundary value problems. In the same line of research, the method was subsequently applied to the parametric case in \cite{DG00} and \cite{DGG00}, where it yielded optimal regularity results for nondegenerate elliptic systems and almost minimizers of quasiconvex integrals, providing a clear and effective framework for regularity theory that also extends to boundary problems (see \cite{G02}). The main analytical tool in this framework is a version of the $\mathcal{A}$-harmonic approximation lemma \cite{DS02} (see Lemma \ref{Aharmonicapproximation} below), which roughly states that if a map $w$ approximately satisfies a linear elliptic system with constant coefficients in the sense of (\ref{eq:almostAsense}), then one can find an exact solution $h$ to the same system that is close to $w$ in the sense of (\ref{eq:Aapproximatesense}).

The regularity theory for nonlinear elliptic systems and variational integrals began with scalar equations exhibiting degenerate or singular behavior, obtaining local H\"older continuity of the gradient. In the vectorial case, examples \cite{N78,SY02} suggest that only partial regularity of minimizers of (\ref{F}) is generally expected unless Uhlenbeck structure is assumed in $f$, that is, unless $f$ is isotropic in the latter term. Via the blow-up techniques that appeared first in \cite{G61,A68}, solutions to systems of the type $-\div(a(x)|Du|^{p-2}Du)=\mu$ are shown, for instance in \cite{U77,KM18}, to have everywhere regularity, provided that $a(x)$ and $\mu$ are regular enough and partial regularity in general quasiconvex problems with $p$-growth, which we refer to \cite{E86,AF87,DK02}. These results, however, consider nondegenerate autonomous integrands $f$. Using a version of $\mathcal{A}$-harmonic and $p$-harmonic approximations, Duzaar and Mingione proved, for degenerate quasiconvex functionals with $p$-growth and autonomous integrand $f(P)\equiv f(x,P)$, partial $C^{1,\alpha}$-regularity under the additional assumption that $\partial f(P)\to |P|^{p-2}P$ as $P\to\mathbf{0}$ formally \cite{DM041}. Note that the $\mathcal{A}$-harmonic approximation was first employed in partial regularity theory by Duzaar and Grotowski \cite{DG00}, while the $p$-harmonic approximation was first obtained in \cite{DM04}. Moreover, Diening, Stroffolini and Verde obtained improved approximation results using the Lipschitz truncation method which allows us to consider general $\phi$-growth problems \cite{DLSV12,DSV12}. Foss and Mingione investigated for nonautonomous quasiconvex funcionals with $p$-growth in \cite{FM08} and see also \cite{B12} for cases where degeneracy is further assumed. 
We further mention that partial regularity results for nonautonomous elliptic systems or quasiconvex functionals with non-standard growth are obtained in \cite{H13,G16,O16,GPRT17,GPT17,O18,O18B,D19,CO20}.

The double phase energy
\begin{equation*}
    \int_{\Omega}H(x,|Dv|)\, dx, \quad \text{with }\ H(x,t)=t^p+a(x)t^q,
\end{equation*}
where $1<p\le q$ and $0\le a(x) \le L$, was first introduced by Zhikov \cite{Z86,Z95,Z97} in the setting of homogenization and as a prototype exhibiting the Lavrentiev phenomenon. Esposito, Leonetti and Mingione \cite{ELM04} introduced the optimal condition \eqref{a} for $H$ to obtain regularity results for minimizers of the above functional, providing further examples of double phase problems linked to the Lavrentiev phenomenon, and proved higher integrability for the minimizers. We also refer to \cite{FMM04} for related results. Along the same lines, a sharp and maximal regularity result that the minimizer $u$ satisfies $Du\in C_{loc}^{0,\gamma}(\Omega)$ for some $\gamma\in(0,1)$ has been obtained under the condition (\ref{a}) in \cite{CM15.1,CM15.2,BCM18}. Furthermore, partial regularity for nondegenerate systems with double phase growth were addressed in \cite{O18,O18B,SS23} assuming the superquadratic range ($p\geq2$ ). General degenerate systems with double phase growth was investigated in \cite{OSS25} where partial H\"older continuity of the gradient of their weak solutions was established, without the superquadratic condition for $p$.

In this paper, we investigate the general degenerate energy functional \eqref{F} with double phase growth and quasiconvexity assumptions, and we establish a partial $C^{1,\alpha}$-regularity result for its minimizer $u$. Our overall strategy follows that of \cite{CO20}, which combines the approaches developed in \cite{DM041,DLSV12}. However, the double phase quasiconvex setting leads to various analytical difficulties that require new ideas.

We first obtain self-improving estimates, also known as higher integrability estimates, for both $H(\cdot,|Du|)$ and $H_{|Q|}(\cdot,|Du-Q|)$, where $H_{|Q|}$ denotes the shifted function associated with $H$ for $Q\in\mathbb R^{N\times n}$ (see \eqref{def:shift} with $\phi(t)=H(x,t)$ and $a=|Q|$). We then distinguish two regimes. The nondegenerate regime is characterized by
\[
\fint_{B_{r}(x_0)}
\left|V_{H_{B_{r}(x_0)}^-}(Du)-
\left(V_{H_{B_{r}(x_0)}^-}(Du)\right)_{x_0,r}\right|^2 dx
\ll
\fint_{B_{r}(x_0)}\left|V_{H_{B_{r}(x_0)}^-}(Du)\right|^2 dx,
\]
while the complementary case corresponds to the degenerate regime. Here, $H^-_{B_r}(t)$ denotes the infimum of $H(x,t)$ over $x\in B_r$, and $V_\phi$ is defined in \eqref{def:V}. In each regime, we derive an excess-decay estimate for $V_{H^-_{B_r}}(Du)$: in the nondegenerate regime this relies on the $\mathcal A$-harmonic approximation lemma (Lemma~\ref{Aharmonicapproximation}), whereas in the degenerate regime it is obtained by applying the $\phi$-minimizing approximation lemma (Lemma~\ref{lem:minimizingapproximation}).

 We summarize the main issues and novelties of our approach below.

\begin{itemize}

\item In the degenerate regime, existing partial regularity results for quasiconvex functionals with $p$- or Orlicz-type growth rely on the $p$- or $\phi$-harmonic approximation together with Ekeland’s variational principle; see \cite{DM041,CO20}. However, it is unclear whether Ekeland’s principle is applicable to the double phase quasiconvex setting (see \eqref{C4}). Moreover, under this quasiconvex setting, it is also unclear that the higher integrability $Du\in L^{q}_{\mathrm{loc}} (\Omega,\R^{N\times n})$ holds.  We therefore adopt a more direct approach based on the $\phi$-minimizing approximation lemma, which is a variational analogue of the $\phi$-harmonic approximation, which enables a direct comparison between $Du$ and the gradient of a minimizer of an isotropic functional.

\item To apply the $\phi$-minimizing approximation lemma in the degenerate regime, we impose condition \eqref{C7} on the integrand $f$, which ensures that $f(x,P)-f(x,\mathbf 0)$ approximates $H(x,|P|)$ as $P\to \mathbf 0$. In previous works, a similar assumption has been required for $\partial f(x,P)=D_P f(x,P)$. In Remark~\ref{rem:C7}, we show that our assumption \eqref{C7} is more general.

\item We assume a Hölder continuity condition in $x$ for $f(x,P)$ itself (see \eqref{C5}), rather than for $\partial f(x,P)$. This condition is essential for establishing the Caccioppoli inequality in the shifted setting (Lemma~\ref{lem:Caccioppoli2}) and for the $\phi$-minimizing approximation. Combined with the growth condition \eqref{C3} as well as \eqref{C1}, it also yields a Hölder continuity condition for $\partial f(x,P)$ (Lemma~\ref{lem:(A4.5)}), which is needed in the nondegenerate regime.

\item To apply the $\mathcal A$-harmonic approximation to $Du - Q$ with the shifted function $H_{|Q|}$ in the nondegenerate regime, we first establish self-improving estimates for $H_{|Q|}(|Du-Q|)$. This step is substantially more delicate in the double phase quasiconvex setting than in the classical $p$- or Orlicz-growth cases, and it requires both the Hölder continuity assumption \eqref{C5} for $f(x,P)$ and the self-improving estimate for $Du$.

\end{itemize}

\subsection{Main results}
 
Let $H:\Omega\times[0,\infty)\rightarrow[0,\infty)$ be defined by
\begin{equation}\label{H}
    H(x,t):=t^p+a(x)t^q,
\end{equation}
with $a:\Omega\rightarrow [0,\infty)$ and $1<p\leq q$ satisfying
\begin{equation}\label{a}
    a\in C^{0,\alpha}(\Omega),\quad0\leq a\leq L,\quad \frac{q}{p}\leq1+\frac{\alpha}{n},
\end{equation}
for some $\alpha\in(0,1]$ and $L>0$. We denote by $H'(x,t)$ and $H''(x,t)$ the first and the second derivatives of $t\mapsto H(x,t)$, respectively. We can see that $H(x,t)=t^p$ exhibits $p$-phase when $a(x)=0$, whereas if $a(x)>0$, $H$ is of $(p,q)$-phase.

With $H$, we consider a Caratheodory function $f:\Omega\times\mathbb{R}^{N\times n}\rightarrow\mathbb{R}$ that satisfies the following properties with constants  $0\le \nu \le L$ and $\beta_0\in(0,1)$: denoting the gradient and the Hessian of $f$ with respect to $P$ by $\partial f(x,P) \in \R^{N\times n}$ and  $\partial^2 f(x,P) \in \R^{(N\times n)^2}$, respectively,
\begin{enumerate} [label=(C\arabic*), ref=C\arabic*] 
\item \label{C1}
\textbf{differentiability:} for every $x\in\Omega$, the function
\begin{equation*}
    P\in\mathbb{R}^{N\times n}\mapsto f(x,P)
\end{equation*}
is of class $C^1(\mathbb{R}^{N\times n})\cap C^2(\mathbb{R}^{N\times n}\setminus\{{\bf 0}\})$;

\item \label{C2}
\textbf{coercivity:} $f(\cdot,\mathbf{0})$ is integrable on $\Omega$ and  for every $x\in\Omega$ and $P\in\mathbb{R}^{N\times n}$,
\begin{equation*}
    \nu H(x,|P|)\leq f(x,P)-f(x,\mathbf{0});
\end{equation*}

\item \label{C3} 
\textbf{growth condition:} for every $x\in\Omega$ and $P\in\mathbb{R}^{N\times n}\setminus\{{\bf0}\}$, there holds
\begin{equation}\label{ass:gc}
\begin{cases}
    f(x,P)-f(x,\mathbf{0})\leq L H(x,|P|)\\
    |\partial^2 f(x,P)|\leq L H''(x,|P|).;
\end{cases}
\end{equation}

\item \label{C4}
\textbf{uniform $W^{1,H}$-quasiconvexity:}  for every $y\in\Omega$, $P\in\mathbb{R}^{N\times n}$ and open ball $B\subset\mathbb{R}^n$,
\begin{equation*}
    \int_Bf(y,P+D\zeta(x))-f(y,P)\, dx\geq \nu \int_BH''(y,|P|+|D\zeta(x)|)|D\zeta(x)|^2\, dx
\end{equation*}
holds for all $\zeta\in C_c^\infty(B,\mathbb{R}^N)$;
\item \label{C5}

\textbf{H\"older continuity assumption with respect to $x$:} 
for every $x_1,x_2\in\Omega$ and every $P\in\mathbb{R}^{N\times n}$,
\begin{equation*}
    |f(x_1,P)-f(x_2,P)|\leq L|x_1-x_2|^{\beta_0}\big(H(x_1,|P|)+H(x_2,|P|)\big)+L|a(x_1)-a(x_2)||P|^q;
\end{equation*}

\item \label{C6}
 \textbf{assumption for the nondegenerate case:} 
 \begin{equation*}
    |\partial^2 f(x,P)-\partial^2 f(x,P+Q)|\leq c_1H''(x,|P|)\left(\frac{|Q|}{|P|}\right)^{\beta_0}
\end{equation*}
holds for every $x\in\Omega$ and $P,Q\in\mathbb{R}^{N\times n}$ with $0<|Q|\leq\frac{|P|}{2}$;

\item \label{C7}
\textbf{assumption for the degenerate case:} for every $x\in\Omega$ and $\delta>0$, there is $\kappa=\kappa(\delta)>0$ such that whenever $P\in \R^{N\times n}$ with $0< |P| < \kappa$, there holds
\begin{equation}\label{ass:A6}
    |H(x,|P|)-f(x,P)+f(x,\mathbf{0})|<\delta H(x,|P|).
\end{equation}
\end{enumerate}
Note that the condition \eqref{C4} implies the following strong Legendre-Hadamard condition of $\partial^2 f(x,P)$: for every $x\in \Omega$,  $P\in\mathbb{R}^{N\times n}\setminus\{{\bf0}\}$, $b=(b^1,\dots,b^N)\in\mathbb{R}^N$, and  $z\in\mathbb{R}^n$
\begin{equation}\label{eq:LH}
    \langle\,  \partial^2 f(x,P) (b\otimes z)\, |\, b\otimes z \, \rangle\geq cH''(x,|P|)|b|^2|z|^2,
\end{equation}
where $\langle\, \cdot\, |\, \cdot\, \rangle$ is the inner product in $\mathbb{R}^{N\times n}$ and $b\otimes z:=(b^iz_j)_{i,j}\in\mathbb{R}^{N\times n}$ denotes their tensor product, as we follow the argument in \cite{Giusti}.  

\begin{rem}\label{rem:C7}
In previous literature, such as \cite{DM04, CO20}, assumptions for the degenerate case are typically formulated using the derivative $\partial f$. In our context, an analogous condition can be stated as:
\begin{equation*}
    \left| H'(x, |P|) \frac{P}{|P|} - \partial f(x, P) \right| < \delta H'(x, |P|),
\end{equation*}
where the parameters $x, \delta,$ and $P$ are as specified in \eqref{C7}. This derivative-based approach is more restrictive than our assumption \eqref{C7}. Indeed, the former implies the latter through the following integral estimate:
\begin{equation*}
    \begin{aligned}
        |H(x, |P|) - f(x, P) + f(x, \mathbf{0})| 
        &= \left| \int_0^1 \frac{d}{dt} \left[ H(x, t|P|) - f(x, tP) \right] dt \right| \\
        &= \left| \int_0^1 \left\langle H'(x, t|P|) \frac{P}{|P|} - \partial f(x, tP) \, \middle| \, P \right\rangle dt \right| \\
        &\leq \int_0^1 \left| H'(x, t|P|) \frac{P}{|P|} - \partial f(x, tP) \right| |P| \, dt \\
        &< \int_0^1 \delta H'(x, t|P|) |P| \, dt = \delta H(x, |P|).
    \end{aligned}
\end{equation*}
Consequently, our assumption \eqref{C7} represents a refinement and relaxation of the conditions used in previous works.
\end{rem}

Our main theorem is stated as follows.
 \begin{thm}\label{mnthm}
     Let $H\,:\,\Omega\times[0,\infty)\to[0,\infty)$ be defined as in (\ref{H}) with (\ref{a}), and $f:\Omega\times\mathbb{R}^{N\times n}\to\mathbb{R}$ satisfy the assumptions  \eqref{C1}--\eqref{C7}. If $u\in W^{1,1}(\Omega,\mathbb{R}^N)$ with $H(\cdot,|Du|)\in L^1(\Omega)$ minimizes $\mathcal{F}$, then there exist $\beta=\beta(n,N,p,q,\alpha,L,[a]_{C^{0,\alpha}},\nu,\beta_0)\in(0,1)$ and an open subset $\Omega_0\subset\Omega$ such that $|\Omega\setminus\Omega_0|=0$ and
     \begin{equation*}
         Du\in C_{loc}^{0,\beta}\left(\Omega_0,\R^{N\times n}\right).
     \end{equation*}
     Moreover, $\Omega\setminus\Omega_0\subset\Sigma_1\cup\Sigma_2$, where
     \begin{equation*}
         \begin{aligned}
             &\Sigma_1:=\left\{x_0\in\Omega\ \Bigg|\ \liminf_{r\to0+}\fint_{B_r(x_0)}|V_{H_{B_r(x_0)}^-}(Du)-(V_{H_{B_r(x_0)}^-}(Du))_{x_0,r}|^2\, dx>0\right\},\\
             &\Sigma_2:=\left\{x_0\in\Omega\ \Bigg|\ \limsup_{r\to0+}\fint_{B_r(x_0)}|V_{H_{B_r(x_0)}^-}(Du)|^2\, dx=\infty\right\}.
         \end{aligned}
     \end{equation*}
 \end{thm}

 \begin{rem}
     In view of (\ref{eq:frozenH}), (\ref{eq:Vtoshift}) and (\ref{eq:Vmean}),
     \begin{equation*}
         \fint_{B_r(x_0)}\left|V_{H_{B_r(x_0)}^-}(Du)\right|^2\, dx\leq \fint_{B_r(x_0)}\left|V_{H}(x,Du)\right|^2\, dx
     \end{equation*}
     and
     \begin{equation*}
     \begin{aligned}
         \fint_{B_r(x_0)}|V_{H_{B_r(x_0)}^-}(Du)-(V_{H_{B_r(x_0)}^-}&(Du))_{x_0,r}|^2\, dx\\
         &\lesssim\fint_{B_r(x_0)}|V_{H}(x,Du)-(V_{H}(x,Du))_{x_0,r}|^2\, dx.
     \end{aligned}
     \end{equation*}
 Thus, since $|V_H(x,Du)|^2\sim H(x,Du)\in L^1(\Omega)$, we deduce from the Lebesgue differentiation theorem that $|\Sigma_1|=|\Sigma_2|=0$.
 \end{rem}


\section{Preliminaries}

\subsection{Basic notation}\ \\
$\Omega$ is an open bounded domain in $\mathbb{R}^n$ with $n\ge2$, and $B_r(x_0)$ denotes the open ball of radius $r>0$ centered at $x_0\in\mathbb{R}^n$. $B_r$ may often be used in place of $B_r(0)$, in the case when $x_0=0$. For $g\in L^1(B_r(x_0),\mathbb{R}^m)$, the average of $g$ in $B_r(x_0)$ is denoted as
\begin{equation*}
    (g)_{x_0,r}:=\fint_{B_r(x_0)}g\, dx.
\end{equation*}
$\mathbb{R}^{N\times n}$ is the space of all $N\times n$ real matrices.  We let $Dw$ denote the distributional derivative of $w\in L^1(\Omega,\mathbb{R}^N)$ in $\mathbb{R}^{N\times n}$. For $p>1$, $p':=\frac{p}{p-1}$ represents the H\"older conjugate of $p$. Moreover, if $1\le p<n$, $p^*:=\frac{np}{n-p}$ stands for the Sobolev conjugate of $p$ and if $p\geq n$, $p^*$ is any real number larger than $p$. Given $U\subset\mathbb{R}^n$, $\chi_U$ is the characteristic function with respect to $U$. For two real-valued functions $g$ and $h$ on the same domain, write $g\lesssim h$ provided that $g\leq ch$ for some $c>0$ only depending on structural constants and write $g\sim h$ if $g\lesssim h$ and $h\lesssim g$. Throughout the paper, we denote by $u$ a minimizer of (\ref{F})

\subsection{N-functions}\ \\
Here we recall some elementary definitions and basic results about Orlicz functions.

 An \textit{N-function} is defined as a real-valued function $\varphi:[0,\infty)\to[0,\infty)$, which is convex and satisfies the following conditions: 
\begin{itemize}
    \item $\varphi(0)=0$
    \item $\varphi$ admits the right-continuous, nondecreasing derivative $\varphi'$ which satisfies $\varphi'(0)=0$
    \item $\varphi'(t)>0$ for $t>0$, and $\lim_{t\to+\infty}\varphi'(t)=\infty$.
\end{itemize}
$\varphi$ is said to satisfy the $\Delta_2$-condition and write $\varphi\in\Delta_2$ if there is $c>0$ such that for all $t\geq0$, there holds $\varphi(2t)\leq c\varphi(t)$. The smallest such $c$ is denoted by $\Delta_2(\varphi)$. 

For an N-function $\varphi$, we assume that
\begin{equation}\label{eq:Nincrange}
    p_1\leq\inf_{t>0}\frac{t\varphi'(t)}{\varphi(t)}\leq\sup_{t>0}\frac{t\varphi'(t)}{\varphi(t)}\leq p_2,
\end{equation}
for some $1<p_1\leq p_2<\infty$. Furthermore, we can also assume that $\varphi\in C^2((0,\infty))$ satisfies
\begin{equation}\label{eq:N'incrange}
    0<p_1-1\leq\inf_{t>0}\frac{t\varphi''(t)}{\varphi'(t)}\leq\sup_{t>0}\frac{t\varphi''(t)}{\varphi'(t)}\leq p_2-1.
\end{equation}
Note that if $\varphi$ satisfies (\ref{eq:N'incrange}), then (\ref{eq:Nincrange}) holds and hence we have
\begin{equation}\label{eq:Neqv}
    \varphi(t)\sim t\varphi'(t)\quad\text{ and }\quad \varphi(t)\sim t^2\varphi''(t),\qquad t>0.
\end{equation}
For $H$ defined as in (\ref{H}) and fixed $x\in\Omega$, $\varphi(t):=H(x,t)$ is an N-function satisfying (\ref{eq:N'incrange}) with $p_1=p$ and $p_2=q$.

The conjugate of $\varphi$ is denoted by $\varphi^*(t):=\sup_{s\geq0}(st-\varphi(s))$. Then it is again an N-function with $\frac{p_2}{p_2-1}$ and $\frac{p_1}{p_1-1}$ in place of $p_1$ and $p_2$, respectively, and $(\varphi^*)^*=\varphi$. $\Delta_2(\varphi,\varphi^*)$ shall denote the constants depending on $\Delta_2(\varphi)$ and $\Delta_2(\varphi^*)$. The following properties then follow from \eqref{eq:Nincrange}. 

\begin{prop}
    Let $\varphi:[0,\infty)\to[0,\infty)$ be an N-function with (\ref{eq:Nincrange}). Then
    \begin{itemize}
        \item[$(a)$] the mappings
        \begin{equation*}
            t\in(0,\infty)\to\frac{\varphi'(t)}{t^{p_1-1}},\,\frac{\varphi(t)}{t^{p_1}} \quad\text{ and }\quad t\in(0,\infty)\to\frac{\varphi'(t)}{t^{p_2-1}},\,\frac{\varphi(t)}{t^{p_2}}
        \end{equation*}
        are nondecreasing and nonincreasing, respectively. In particular,
        \begin{equation}\label{eq:Nconstant}
                \min\{C^{p_1},C^{p_2}\}\varphi(t)\leq  \varphi(Ct) \le  \max\{C^{p_1},C^{p_2}\}\varphi(t)  ,\quad C>0.
        \end{equation}
        Moreover,
        \begin{equation*}
             \min\{C^{\frac{p_1}{p_1-1}},C^{\frac{p_2}{p_2-1}}\} \varphi^*(t) \le  \varphi^*(Ct) \le  \max\{C^{\frac{p_1}{p_1-1}}, C^{\frac{p_2}{p_2-1}}\}\varphi^*(t),\quad C>0.
        \end{equation*}
        \item[$(b)$] (Young's inequality) for any $\lambda\in(0,1]$, it holds that
        \begin{equation}\label{eq:Nyoung}
                \tau t\leq\lambda^{-p_2+1}\varphi(\tau)+\lambda\varphi^*(t) 
                \quad\text{and}\quad
                \tau t\leq \lambda\varphi(\tau)+\lambda^{-\frac{1}{p_1-1}}\varphi^*(t).
        \end{equation}
        \item[$(c)$] there exists a constant $c=c(p_1,p_2)>1$ such that
        \begin{equation}\label{eq:conjugatecompositedifferentiation}
            c^{-1}\varphi(t)\leq\varphi^*(t^{-1}\varphi(t))\leq c\varphi(t).
        \end{equation}
    \end{itemize}
\end{prop}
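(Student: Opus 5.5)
The statement to be proved is the preliminary Proposition on N-functions satisfying \eqref{eq:Nincrange}. All three parts follow from integrating the index bounds \eqref{eq:Nincrange} and from the definition of the conjugate.

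\emph{Part (a).} Write $\varphi(t)=\exp(\log\varphi(t))$. Then
\[
\frac{d}{dt}\log\frac{\varphi(t)}{t^{p_1}}=\frac{1}{t}\Big(\frac{t\varphi'(t)}{\varphi(t)}-p_1\Big)\ge0 .
\]
This holds a.e., and $\log\varphi$ is locally absolutely continuous since $\varphi$ is convex. Hence $\varphi(t)/t^{p_1}$ is nondecreasing, and in the same way $\varphi(t)/t^{p_2}$ is nonincreasing.

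For $\varphi'$ I would argue as follows. \eqref{eq:Nincrange} alone controls only $\varphi$; the monotonicity of $\varphi'(t)/t^{p_1-1}$ genuinely requires \eqref{eq:N'incrange}, which is the setting used in the paper (e.g.\ for $H$). I would therefore invoke that hypothesis for the $\varphi'$ statements, with the analogous log-derivative computation
\[
\frac{t\varphi''(t)}{\varphi'(t)}-(p_1-1)\ge0 .
\]

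Inequality \eqref{eq:Nconstant} follows from the monotonicity of $\varphi(t)/t^{p_i}$. For $C\ge1$,
\[
C^{p_1}\varphi(t)\le\varphi(Ct)\le C^{p_2}\varphi(t),
\]
and for $C<1$ the inequalities reverse; this gives the min/max form. For $\varphi^*$ I would use the standard fact that $\varphi^*$ has indices $p_2'$ and $p_1'$. The derivative $(\varphi^*)'$ is the right-inverse of $\varphi'$. Substituting $s=\varphi'(t)$ gives
\[
\frac{s(\varphi^*)'(s)}{\varphi^*(s)}=\frac{t\varphi'(t)}{t\varphi'(t)-\varphi(t)}=\Big(1-\frac{\varphi(t)}{t\varphi'(t)}\Big)^{-1}\in[p_2',p_1'] ,
\]
and then the same argument applies.

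\emph{Part (b).} Start from the Young inequality $\tau t\le\varphi(\tau)+\varphi^*(t)$, which is immediate from the definition of $\varphi^*$.
\begin{itemize}
\item For the first inequality, write $\tau t=(\lambda^{-1}\tau)(\lambda t)$. Then use $\varphi(\lambda^{-1}\tau)\le\lambda^{-p_2}\varphi(\tau)$, and $\varphi^*(\lambda t)\le\lambda\varphi^*(t)$, which holds by convexity since $\varphi^*(0)=0$. This gives the bound $\lambda^{-p_2}\varphi+\lambda\varphi^*$. To reach the stated exponent $-p_2+1$, instead split as $\tau t=\lambda^{-1}\big(\tau\cdot\lambda t\big)$ and apply Young with the pair $(\lambda^{-1}\tau)\cdot\lambda t$ after rescaling by $\lambda$. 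Concretely,
\[
\tau t=\lambda\big[(\lambda^{-1}\tau)t\big]\le\lambda\varphi(\lambda^{-1}\tau)+\lambda\varphi^*(t)\le\lambda^{1-p_2}\varphi(\tau)+\lambda\varphi^*(t).
\]
\item The second inequality is symmetric: $\tau t=\lambda[\tau(\lambda^{-1}t)]$, followed by the growth bound $\varphi^*(\lambda^{-1}t)\le\lambda^{-p_1'}\varphi^*(t)$ from part (a). This gives $\lambda^{1-p_1'}=\lambda^{-1/(p_1-1)}$.
\end{itemize}

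\emph{Part (c).} Use the identity $\varphi^*(\varphi'(t))=t\varphi'(t)-\varphi(t)$, the equality case of Young. Together with \eqref{eq:Nincrange} it gives
\[
(p_1-1)\varphi(t)\le\varphi^*(\varphi'(t))\le(p_2-1)\varphi(t).
\]
Next, $\varphi(t)/t\in[\varphi'(t)/p_2,\varphi'(t)/p_1]$. By the monotonicity of $\varphi^*$ and the scaling from part (a) with constants depending only on $p_1,p_2$, $\varphi^*(\varphi(t)/t)$ is comparable to $\varphi^*(\varphi'(t))$, hence to $\varphi(t)$.

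\emph{Main obstacle.} The only subtle points are the index computation for $\varphi^*$ and the observation that the $\varphi'$ monotonicity needs \eqref{eq:N'incrange}. Everything else is routine.
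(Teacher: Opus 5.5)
Your proof is correct. The paper gives no proof of this proposition; it only remarks that the properties follow from \eqref{eq:Nincrange}. Your argument is the standard one being invoked: log-derivative monotonicity, the index computation for $\varphi^*$, Young's inequality applied to $\lambda[(\lambda^{-1}\tau)t]$ and $\lambda[\tau(\lambda^{-1}t)]$, and the equality case $\varphi^*(\varphi'(t))=t\varphi'(t)-\varphi(t)$ combined with $\varphi'(t)/p_2\le\varphi(t)/t\le\varphi'(t)/p_1$.

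Your caveat about the $\varphi'$ statements in (a) is justified, and it corrects the paper's wording. Under \eqref{eq:Nincrange} alone, $\varphi'$ may be constant on an interval. For instance, take $\varphi'(t)=t$ on $[0,1]$, $\varphi'(t)=1$ on $[1,\tfrac{11}{10}]$, and $\varphi'(t)=t-\tfrac1{10}$ for $t\ge\tfrac{11}{10}$. Then $t\varphi'(t)/\varphi(t)\in[\tfrac{11}{6},\tfrac{21}{10}]$, so \eqref{eq:Nincrange} holds with $p_1=\tfrac32$, $p_2=\tfrac52$. Yet $\varphi'(t)/t^{1/2}$ strictly decreases on $(1,\tfrac{11}{10})$. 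So \eqref{eq:N'incrange}, which the paper imposes just before the proposition and which $H(x,\cdot)$ satisfies, is genuinely needed there.

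Everything else, including the $\varphi^*$ scaling, needs only \eqref{eq:Nincrange}. For the $\varphi^*$ scaling you can also avoid the inverse-function bookkeeping: substitute $t=C^{1/(p_i-1)}\sigma$ in $\varphi^*(Cs)=\sup_t(Cst-\varphi(t))$ and use \eqref{eq:Nconstant}.

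One clean-up in (b): delete the first attempt, which only yields $\lambda^{-p_2}$, and the garbled sentence about splitting $\tau t=\lambda^{-1}(\tau\cdot\lambda t)$. The displayed chain $\tau t=\lambda[(\lambda^{-1}\tau)t]\le\lambda\varphi(\lambda^{-1}\tau)+\lambda\varphi^*(t)\le\lambda^{1-p_2}\varphi(\tau)+\lambda\varphi^*(t)$ is the whole proof.
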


Note from (\ref{eq:Nconstant}) that $\varphi$ and $\varphi^*$ both satisfy $\Delta_2$-condition with constants $\Delta_2(\varphi)$ and $\Delta_2(\varphi^*)$ determined by $p_1$ and $p_2$.

Also from \cite[Lemma 20]{DE08}, we have the following property for N-functions.
\begin{lem}\label{lem:phi''line}
    Let $\varphi$ be an N-function with $\varphi,\varphi^*\in\Delta_2$. Then for all $P_0,P_1\in\mathbb{R}^{N\times n}$ with $|P_0|+|P_1|>0$, there holds
    \begin{equation*}
        \int_0^1\frac{\varphi'(|(1-\theta)P_0+\theta P_1|)}{|(1-\theta)P_0+\theta P_1|}\, d\theta\sim\frac{\varphi'(|P_0|+|P_1|)}{|P_0|+|P_1|},
    \end{equation*}
    where hidden constants depend only on $\Delta_2(\varphi,\varphi^*)$.
\end{lem}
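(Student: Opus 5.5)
We split the claim into a lower and an upper bound for the integral, and set $M:=|P_0|+|P_1|>0$ and $P_\theta:=(1-\theta)P_0+\theta P_1$, so that $|P_\theta|\le M$ for all $\theta\in[0,1]$.

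\textbf{Tools.} We use only consequences of $\varphi,\varphi^*\in\Delta_2$. First, $\varphi(t)\sim t\varphi'(t)$, which follows from convexity and $\Delta_2(\varphi)$. Second, there exist exponents $1<p_1\le p_2<\infty$, depending only on $\Delta_2(\varphi,\varphi^*)$, such that
\begin{equation*}
\varphi(st)\le c\, s^{p_1}\varphi(t)\quad\text{and}\quad \varphi(st)\ge c^{-1}s^{p_2}\varphi(t)\qquad\text{for } s\in(0,1],\ t>0.
\end{equation*}
This is the almost-monotone form of \eqref{eq:Nconstant}; the lower index $p_1>1$ comes from $\Delta_2(\varphi^*)$. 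Combining the two, $g(t):=\varphi'(t)/t\sim \varphi(t)/t^2$ satisfies
\begin{equation*}
g(s)\lesssim (s/M)^{p_1-2}\, g(M)\quad\text{for }0<s\le M,\qquad\text{and}\qquad g(s)\sim g(M)\quad\text{for } s\in[M/8,M].
\end{equation*}

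\textbf{Lower bound.} Assume without loss of generality that $|P_0|\ge M/2$. Then $|P_1-P_0|\le M$, so for $\theta\in[0,1/4]$ we get $|P_\theta|\ge |P_0|-\theta|P_1-P_0|\ge M/4$. On this interval $g(|P_\theta|)\sim g(M)$, hence the integral is at least $\tfrac14 c^{-1}g(M)$.

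\textbf{Upper bound, easy cases.} If $p_1\ge2$, then $g(s)\lesssim g(M)$ for all $s\le M$ and the bound is immediate. So assume $p_1<2$. If $|P_1-P_0|<M/4$, then both $|P_0|$ and $|P_1|$ are at least $3M/8$, and therefore $|P_\theta|\ge 3M/8-M/4=M/8$ for every $\theta$. Again $g(|P_\theta|)\sim g(M)$ on all of $[0,1]$ and we are done.

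\textbf{Upper bound, main case.} The remaining case, $p_1<2$ with $|P_1-P_0|\ge M/4$, is the only real obstacle, because the integrand may blow up where the segment passes near the origin. Let $\theta_0\in\mathbb{R}$ minimize $\theta\mapsto|P_\theta|$ over $\mathbb{R}$. Then $P_{\theta_0}\perp(P_1-P_0)$, so by Pythagoras
\begin{equation*}
|P_\theta|\ge|\theta-\theta_0|\,|P_1-P_0|\ge \tfrac{M}{4}|\theta-\theta_0|.
\end{equation*}
Since $p_1-2<0$, this gives
\begin{equation*}
\int_0^1 g(|P_\theta|)\,d\theta\lesssim g(M)\int_0^1\Big(\tfrac{|\theta-\theta_0|}{4}\Big)^{p_1-2}d\theta\lesssim g(M).
\end{equation*}
The last integral is finite, uniformly in $\theta_0$, precisely because $p_1>1$. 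All constants depend only on $\Delta_2(\varphi,\varphi^*)$.
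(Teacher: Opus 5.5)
Your proof is correct. The lower bound from the quarter-interval where $|P_\theta|\ge M/4$ is sound, and so is the comparison $\varphi'(t)/t\lesssim (t/M)^{p_1-2}\varphi'(M)/M$ for $t\le M$; here $p_1>1$ is the standard consequence of $\varphi^*\in\Delta_2$ (equivalently $\varphi\in\nabla_2$), and it is immediate from \eqref{eq:Nconstant} in the paper's setting. The reduction to $|P_1-P_0|\ge M/4$ and the Pythagoras bound $|P_\theta|\ge|\theta-\theta_0|\,|P_1-P_0|$ are also sound, and all constants depend only on $\Delta_2(\varphi,\varphi^*)$.

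The paper gives no proof of its own and simply quotes \cite[Lemma 20]{DE08}. Your argument is the standard route to this estimate: reduce to power growth via the indices of $\varphi$, then integrate the negative power $|P_\theta|^{p_1-2}$, with $p_1-2>-1$, along the segment. Your Pythagoras step makes the classical bound $\int_0^1|P_\theta|^{p_1-2}\,d\theta\lesssim(|P_0|+|P_1|)^{p_1-2}$ self-contained.
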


 We define the Orlicz and Orlicz--Sobolev spaces, denoted by 
$L^\varphi(\Omega,\mathbb{R}^N)$ and $W^{1,\varphi}(\Omega,\mathbb{R}^N)$, respectively. 
They are given by
\[
    g\in L^\varphi(\Omega,\mathbb{R}^N)\ \ 
    \overset{\mathrm{def}}{\Longleftrightarrow}\ \
    g\in L^1(\Omega,\mathbb{R}^N)
    \ \text{and}\ 
    \int_\Omega \varphi(|g|)\, dx < \infty,
\]
equipped with the Luxemburg norm $\|g\|_{L^\varphi(\Omega,\mathbb{R}^N)}
    := 
    \inf\{\lambda>0 : \int_\Omega \varphi(|g|/\lambda)\, dx \le 1\}$.
Similarly,
\[
    g\in W^{1,\varphi}(\Omega,\mathbb{R}^N)\ \ 
    \overset{\mathrm{def}}{\Longleftrightarrow}\ \ 
    g\in W^{1,1}(\Omega,\mathbb{R}^N)
    \ \text{and}\ 
    |g|, |D g|\in L^\varphi(\Omega,\R^1),
\]
with norm
$
    \|g\|_{W^{1,\varphi}(\Omega,\mathbb{R}^N)}
    := 
    \|g\|_{L^\varphi(\Omega,\R^N)}
    +
    \|Dg\|_{L^\varphi(\Omega,\R^{N\times n})}$.
We denote by $W^{1,\varphi}_0(\Omega,\mathbb{R}^N)$ the closure of 
$C_0^\infty(\Omega,\mathbb{R}^N)$ in $W^{1,\varphi}(\Omega,\mathbb{R}^N)$.  
For simplicity, we write 
$\|g\|_{L^\varphi(\Omega)}=\|g\|_{L^\varphi(\Omega,\mathbb{R}^N)}$ and 
$\|g\|_{W^{1,\varphi}(\Omega)}=\|g\|_{W^{1,\varphi}(\Omega,\mathbb{R}^N)}$.  
In the same manner, replacing $\varphi$ by $H(x,\cdot)$ above, we define the Lebesgue space $L^H(\Omega,\mathbb{R}^N)$ and 
the Sobolev spaces $W^{1,H}(\Omega,\mathbb{R}^N)$ and $W^{1,H}_0(\Omega,\mathbb{R}^N)$ 
associated with the double phase function $H$ defined in~\eqref{H}.

We introduce an important concept: the \textit{shifted} N-functions $\varphi_s$ where $s\geq0$; see \cite{DE08}. We define 
\begin{equation}\label{def:shift}
    \varphi_s(t):=\int_0^t\varphi_s'(\tau)\, d\tau \quad\text{with}\quad \varphi_s'(t):=\frac{\varphi'(s+t)}{s+t} t,\qquad t\geq0.
\end{equation}
Note that, as stated in \cite[Proposition~2.3]{CO20}, the functions $\varphi_s$
satisfy the $\Delta_2$-condition uniformly in $s\ge 0$ and we have
\begin{equation*}
    \varphi_s(t)\sim\varphi_s'(t)t;
\end{equation*}
\begin{equation}\label{eq:shifteqv}
    \varphi_s(t)\sim\varphi''(s+t)t^2\sim\frac{\varphi(s+t)}{(s+t)^2}t^2\sim\frac{\varphi'(s+t)}{s+t}t^2;
\end{equation}
\begin{equation}\label{eq:addtoshift}
    \varphi(s+t)\sim[\varphi_s(t)+\varphi(s)].
\end{equation}
Also, by \cite[Lemma 30]{DE08}, 
\begin{equation*}
    \varphi_s^*(\kappa\varphi'(s))\sim\kappa^2\varphi(s)
\end{equation*}
uniformly in $\kappa\in[0,1]$ and $s\geq0$. Note that for any $x\in \Omega$, the N-function $H(x,\cdot)$ satisfies all the properties above about shifting, uniformly in $x$. 
In the case of $H$ in (\ref{H}), denoting $\psi(t):=t^p$ and $ \widetilde \psi(t):=t^q$, we have that for $s,t\geq0$,
\begin{equation}\label{eq:shiftislinear}
    H_s'(x,t)=p(t+s)^{p-2}t+a(x)q(s+t)^{q-2}t =\psi_s'(t)+a(x)\widetilde{\psi}_s'(t).
\end{equation}

We next define
\begin{equation}\label{def:V}
    V_\varphi(P):=\sqrt{\frac{\varphi'(|P|)}{|P|}}P
    \ \ \text{and}\ \      V_H(x,P):=\sqrt{\frac{H'(x,|P|)}{|P|}}P
    ,\quad P\in\mathbb{R}^{N\times n}.
\end{equation}
and in particular, if $\varphi(t)=t^p$, we reduce $V_\varphi$ to $V_p$. Note that
\begin{equation}\label{eq:Vsquare}
|V_\varphi(P)|^2\sim \varphi(|P|).    
\end{equation}
We have from \cite[Lemma 7]{DLSV12} that for every $P,Q\in\mathbb{R}^{N\times n}$,
\begin{equation}\label{eq:Vtoshift}
    |V_\varphi(P)-V_\varphi(Q)|^2\sim\varphi_{|Q|}(|P-Q|)\sim \frac{\varphi'(|P|+|Q|)}{|P|+|Q|}|P-Q|^2.
\end{equation}
In particular, as in \cite[(2.14)]{DLSV12}, we also have from \eqref{C3} that for every $x\in\Omega$ and every $P,Q\in\mathbb{R}^{N\times n}$,
\begin{equation}\label{eq:differencetoshift}
\begin{aligned}
    |\partial f(x,P)-\partial f(x,Q)|&\leq \left(\int_0^1|\partial^2 f(x,P+t(Q-P))|\, dt \right)|P-Q|\\
    &\leq cH''(x,|P|+|P-Q|)|P-Q|\\
    &\leq cH'_{|P|}(x,|P-Q|)
\end{aligned}
\end{equation}
Recall from \cite[Lemma A.2]{DKS12} that for $g\in L^{\varphi}(B_r(x_0),\mathbb{R}^{N\times n})$,
\begin{equation}\label{eq:Vmean}
    \fint_{B_r(x_0)}|V_\varphi(g)-V_\varphi((g)_{x_0,r})|\, dx\sim\fint_{B_r(x_0)}|V_\varphi(g)-(V_\varphi(g))_{x_0,r}|\, dx.
\end{equation}

Recall the modulating coefficient $a:\Omega\to [0,\infty)$ appearing in the double phase function $H$ in \eqref{H}, which is assumed to satisfy $a\in C^{0,\alpha}(\Omega)$. 
Given an open set $U\Subset\Omega$, we define points $x_U^\pm\in\overline{U}$ such that
\begin{equation} \label{eq:pm}
    a_{U}^+:=a(x_{U}^+)=\sup_{x\in U}a(x)\quad\text{ and }\quad a_{U}^-:=a(x_{U}^-)=\inf_{x\in U}a(x).
\end{equation}
In particular, if $U=B_r(x_0)$, we also write $x_{x_0,r}^\pm:=x_{B_r(x_0)}^\pm$ and $a_{x_0,r}^\pm:=a_{B_r(x_0)}^\pm$. In addition, we write
\begin{equation}\label{eq:frozenH}
    H_{U}^\pm(t):=H(x_{U}^\pm,t).
\end{equation}

We introduce Sobolev--Poincar\'e type inequalities for the double phase function $H$. 
From \cite[Theorem~2.13]{O17} (see also \cite[Lemma~2.4]{OSS25}), we first obtain the following estimate.
\begin{lem}\label{lem:sobolevpoincare}
    (Sobolev-Poincar\'e inequality) Let $H$ be defined as in (\ref{H}) and (\ref{a}). Then there exists $\theta_0=\theta_0(n,p,q)\in(0,1)$ such that for any $w\in W^{1,1}(\Omega,\mathbb{R}^N)$ and $B_r=B_r(x_0)\Subset\Omega$ with $r\leq1$, we have
    \begin{equation*}\begin{aligned}
        &\fint_{B_r}H\left(x^+_{x_0,r},\frac{|w-(w)_{x_0,r}|}{r}\right)\, dx\\
        &\leq c(1+[a]_{C^{0,\alpha}}\|Dw\|_{L^p(B_r)}^{q-p})\left(\fint_{B_r}H(x_{x_0,r}^-,|Dw|)^{\theta_0}\, dx\right)^\frac{1}{\theta_0},
    \end{aligned}
    \end{equation*}
    for some $c=c(n,p,q)\geq1$.
\end{lem}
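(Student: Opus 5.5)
The plan is to reduce the estimate to a classical Sobolev--Poincaré inequality for the $p$-phase and handle the $q$-phase by trading the oscillation of $a$ for the factor $[a]_{C^{0,\alpha}}\|Dw\|_{L^p}^{q-p}$. Write $a^+=a^+_{x_0,r}$, $a^-=a^-_{x_0,r}$ and $\tilde w:=w-(w)_{x_0,r}$, and fix an exponent $\theta_0<1$ with $p\theta_0$ and $q\theta_0$ both close to $p$ and $q$, chosen so that $(p\theta_0)^*\ge q$ when possible; this works because $q\le p(1+\alpha/n)\le p+p/n<p^*$. The left-hand side equals $\fint|\tilde w/r|^p\,dx+a^+\fint|\tilde w/r|^q\,dx$. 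For the first term the standard Sobolev--Poincaré inequality with exponent $p\theta_0<p$ and $(p\theta_0)^*\ge p$ gives
\[
\fint|\tilde w/r|^p\le c\Big(\fint|Dw|^{p\theta_0}\Big)^{1/\theta_0}\le c\Big(\fint H(x^-_{x_0,r},|Dw|)^{\theta_0}\Big)^{1/\theta_0}.
\]

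For the second term, split $a^+=a^-+(a^+-a^-)$. The piece with $a^-$ is treated like the $p$-term: Sobolev--Poincaré with exponent $q\theta_0$ gives $a^-\fint|\tilde w/r|^q\le c\,(\fint (a^-)^{\theta_0}|Dw|^{q\theta_0})^{1/\theta_0}$, and this is bounded by the $H^-$ average because $(s+t)^{\theta_0}$ is comparable to $s^{\theta_0}+t^{\theta_0}$. The piece with $a^+-a^-\le [a]_{C^{0,\alpha}}(2r)^\alpha$ is the main obstacle, since it must be controlled without any $a$-weight. Here I will use Sobolev--Poincaré from exponent $p\theta_0$ up to $q$, which requires $q\le (p\theta_0)^*$ and thus the assumption $q/p\le1+\alpha/n<1+1/n$ with $\theta_0$ close to $1$:
\[
\Big(\fint|\tilde w/r|^q\Big)^{1/q}\le c\Big(\fint|Dw|^{p\theta_0}\Big)^{1/(p\theta_0)}.
\]
Raising this to the power $q$ and writing $q=p+(q-p)$, the factor with exponent $q-p$ is estimated by $\big(\fint|Dw|^{p}\big)^{(q-p)/p}\le c\,r^{-n(q-p)/p}\|Dw\|_{L^p(B_r)}^{q-p}$. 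Combined with $r^{\alpha}$ this produces $r^{\alpha-n(q-p)/p}\ge0$ as an exponent, which is bounded by $1$ for $r\le1$ because of \eqref{a}. The remaining factor $(\fint|Dw|^{p\theta_0})^{1/\theta_0}$ is again bounded by the $H^-$ average.

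The delicate point is compatibility of the exponents. One needs $\theta_0=\theta_0(n,p,q)$ with $(p\theta_0)^*\ge q$. This is possible since $p^*>q$ strictly when $p<n$; when $p\ge n$ any $\theta_0$ works after adjusting. Note that the first step needs only $\theta_0\le1$. Collecting the three pieces yields the stated constant $c(1+[a]_{C^{0,\alpha}}\|Dw\|_{L^p(B_r)}^{q-p})$.
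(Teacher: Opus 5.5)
The paper does not prove this lemma; it imports it from \cite[Theorem~2.13]{O17} (see also \cite[Lemma~2.4]{OSS25}), so there is no in-paper argument to compare against. Your argument is a correct, self-contained proof. You split $a^+_{x_0,r}=a^-_{x_0,r}+(a^+_{x_0,r}-a^-_{x_0,r})$ and use $a^+_{x_0,r}-a^-_{x_0,r}\le[a]_{C^{0,\alpha}}(2r)^\alpha$. You then apply the Sobolev--Poincar\'e embedding from exponent $p\theta_0$ up to $q$, and use $r^{\alpha-n(q-p)/p}\le 1$ (from \eqref{a} and $r\le1$). Together these produce exactly the factor $1+[a]_{C^{0,\alpha}}\|Dw\|_{L^p(B_r)}^{q-p}$ with $c=c(n,p,q)$.

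The one point to tighten is the choice of $\theta_0$. The Sobolev--Poincar\'e inequality is only available when the gradient exponent satisfies $p\theta_0\ge 1$. It genuinely fails for exponents below $1$: for a smoothed step of width $\varepsilon$, $\fint|Dw|^s\sim\varepsilon^{1-s}\to0$ while the oscillation stays bounded below. So your remark that the first step ``needs only $\theta_0\le1$'' is not accurate, and ``any $\theta_0$ works after adjusting'' for $p\ge n$ should be made explicit.

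Taking $\theta_0:=\max\{1/p,\,nq/(p(n+q))\}$ settles everything:
\begin{itemize}
\item $\theta_0\in(0,1)$, since $p>1$ and $q(n-p)<np$ by \eqref{a};
\item $p\theta_0=\max\{1,nq/(n+q)\}\in[1,n)$, so $(p\theta_0)^*\ge q$;
\item consequently $q\theta_0\ge1$ and $(q\theta_0)^*\ge q$ as well.
\end{itemize}
With this choice, all three applications of Sobolev--Poincar\'e in your argument are legitimate, with constants depending only on $n,p,q$.
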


Moreover, a Sobolev--Poincar\'e inequality for the shifted function $H_{|Q|}$ under an a priori higher integrability assumption is obtained in \cite[Lemma~2.4]{OSS25}.

\begin{lem}\label{lem:shiftsobolevpoincare}
    Let $H$ be defined as in (\ref{H}) and (\ref{a}), $Q\in\mathbb{R}^{N\times n}\setminus\{0\}$. Then there exists $\theta=\theta(n,p,q)\in (0,1)$ such that for any $w\in W^{1,1}(\Omega,\R^N)$ with $Dw\in L_{\mathrm{loc}}^{p(1+\sigma_0)}(\Omega,\R^{N\times n})$ for some $\sigma_0>0$, and $B_r(x_0)\Subset\Omega$ with $r\leq1$ satisfying $\|Dw\|_{L^{p(1+\sigma_0)}(\Omega)}\leq1$, we have
    \begin{equation*}
    \begin{aligned}
        \fint_{B_r(x_0)}H_{|Q|}\left(x_{x_0,r}^+,\frac{|w-(w)_{x_0,r}|}{r}\right)\, dx&\leq c\left(\fint_{B_r(x_0)}H_{|Q|}(x_{x_0,r}^-,|Dw|))^\theta\, dx\right)^\frac{1}{\theta}\\
        &\quad+c(r^{\alpha(\sigma_0)}+r^\alpha|Q|^{q-p})|Q|^p,
    \end{aligned}
    \end{equation*}
    for some $c=c(n,p,q,\alpha,[a]_{C^{0,\alpha}})\geq1$, where $\alpha(\sigma_0):=\alpha-\frac{(q-p)n}{p(1+\sigma_0)}>0$.
\end{lem}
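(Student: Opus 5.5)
The plan is to exploit the additive structure of the shifted function in \eqref{eq:shiftislinear}. Write $\psi(t)=t^p$ and $\widetilde\psi(t)=t^q$. Then for every $x$,
\[
H_{|Q|}(x,t)=\psi_{|Q|}(t)+a(x)\widetilde\psi_{|Q|}(t)\quad\text{up to constants},
\]
and, by \eqref{eq:shifteqv}, $\psi_{|Q|}(t)\sim(|Q|+t)^{p-2}t^2$ and $\widetilde\psi_{|Q|}(t)\sim(|Q|+t)^{q-2}t^2$. Set $v:=w-(w)_{x_0,r}$ and $x^\pm:=x^\pm_{x_0,r}$. Since $a\in C^{0,\alpha}$, we have $a(x^+)\le a(x^-)+[a]_{C^{0,\alpha}}(2r)^\alpha$. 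This splits the left-hand side as
\[
\fint_{B_r}H_{|Q|}\Big(x^-,\frac{|v|}{r}\Big)dx+c\,r^\alpha\fint_{B_r}\widetilde\psi_{|Q|}\Big(\frac{|v|}{r}\Big)dx=:I+II.
\]

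\textbf{Term $I$.} For fixed $x^-$, the function $\varphi:=H_{|Q|}(x^-,\cdot)$ is an N-function. It satisfies $\Delta_2(\varphi,\varphi^*)$ with constants depending only on $p,q$, uniformly in $|Q|$ and in $a(x^-)$. I will invoke the classical modular Sobolev--Poincar\'e inequality for such N-functions (as in \cite{DE08} or \cite[Lemma~7]{DLSV12}): there is $\theta=\theta(n,p,q)\in(0,1)$ with
\[
\fint_{B_r}\varphi(|v|/r)\,dx\le c\Big(\fint_{B_r}\varphi(|Dw|)^\theta dx\Big)^{1/\theta}.
\]
This already has the desired form for $I$.

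\textbf{Term $II$.} I split $B_r$ according to whether $|v|/r\le|Q|$ or $|v|/r>|Q|$.
\begin{itemize}
\item On $\{|v|/r\le|Q|\}$, we have $\widetilde\psi_{|Q|}(|v|/r)\lesssim|Q|^q$. This part is therefore bounded by $c\,r^\alpha|Q|^{q-p}|Q|^p$, which is one of the error terms in the statement.
\item On $\{|v|/r>|Q|\}$, we have $\widetilde\psi_{|Q|}(|v|/r)\lesssim(|v|/r)^q$. So it remains to bound $r^\alpha\fint_{B_r}(|v|/r)^q\,dx$.
\end{itemize}
For the last quantity I will follow the Esposito--Leonetti--Mingione scheme. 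Apply the Sobolev--Poincar\'e inequality with an exponent $s<p$ whose Sobolev conjugate is at least $q$; this is possible because $q\le p(1+\alpha/n)<p^*$. Then use H\"older's inequality to peel off a factor $|Dw|^{q-p}$, controlled by the $L^{p(1+\sigma_0)}$ norm. This should give
\[
r^\alpha\fint_{B_r}\Big(\frac{|v|}{r}\Big)^qdx\le c\,r^\alpha\Big(\fint_{B_r}|Dw|^{p(1+\sigma_0)}dx\Big)^{\frac{q-p}{p(1+\sigma_0)}}\Big(\fint_{B_r}|Dw|^{p\theta}dx\Big)^{1/\theta}.
\]
Here $\theta\in(0,1)$ is possibly smaller than before, and I will shrink $\theta$ once at the end so that a single exponent works for both $I$ and $II$. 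By the normalization $\|Dw\|_{L^{p(1+\sigma_0)}}\le1$, the prefactor is at most
\[
c\,r^{\alpha-\frac{n(q-p)}{p(1+\sigma_0)}}=c\,r^{\alpha(\sigma_0)}.
\]
Finally, $|Dw|^p\lesssim\psi_{|Q|}(|Dw|)+|Q|^p\le H_{|Q|}(x^-,|Dw|)+|Q|^p$ by \eqref{eq:addtoshift}. Together with $r^{\alpha(\sigma_0)}\le1$, this converts the last bound into
\[
c\Big(\fint_{B_r} H_{|Q|}(x^-,|Dw|)^\theta dx\Big)^{1/\theta}+c\,r^{\alpha(\sigma_0)}|Q|^p.
\]

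\textbf{Main obstacle.} I expect the delicate step to be the interpolation in $II$, namely choosing the exponents so that three requirements hold at once:
\begin{itemize}
\item the power of $r$ produced is exactly $\alpha(\sigma_0)$;
\item the gradient integral on the right appears with a power $\theta<1$, so that it matches $I$;
\item the Sobolev exponent is admissible, which needs $q<s^*$; this is guaranteed by $q/p\le1+\alpha/n$.
\end{itemize}
If the direct H\"older split loses too much, I would instead first apply the unshifted estimate Lemma~\ref{lem:sobolevpoincare} to the $q$-part. I would then absorb its factor $[a]_{C^{0,\alpha}}\|Dw\|^{q-p}_{L^p}$ using the higher integrability assumption on $Dw$.
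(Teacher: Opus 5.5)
The paper does not prove this lemma; it quotes it from \cite[Lemma~2.4]{OSS25}, so there is no in-text argument to compare against. Judged on its own, your argument is correct, and it is the natural route.

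The splitting $H_{|Q|}(x,t)=\psi_{|Q|}(t)+a(x)\widetilde\psi_{|Q|}(t)$ is exact; it follows by integrating \eqref{eq:shiftislinear}. Combined with $a(x^+_{x_0,r})-a(x^-_{x_0,r})\le[a]_{C^{0,\alpha}}(2r)^\alpha$, it reduces the left-hand side to two pieces:
\begin{itemize}
\item the frozen term, covered by the modular Orlicz Sobolev--Poincar\'e inequality of \cite{DE08}, since the $\Delta_2$ constants of $H_{|Q|}(x^-_{x_0,r},\cdot)$ and of its conjugate depend only on $p,q$;
\item the error $c\,r^\alpha|Q|^q+c\,r^\alpha\fint_{B_r}(|v|/r)^q\,dx$. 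Because $\widetilde\psi_{|Q|}(t)\lesssim|Q|^q+t^q$ for all $t$, your case split at $|v|/r=|Q|$ is not even needed.
\end{itemize}

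Your H\"older split works as you describe. Take $s:=\max\{1,\tfrac{nq}{n+q}\}<p$ and write $(\fint_{B_r}|Dw|^s)^{q/s}=(\fint_{B_r}|Dw|^s)^{(q-p)/s}(\fint_{B_r}|Dw|^s)^{p/s}$. Jensen and the normalization $\|Dw\|_{L^{p(1+\sigma_0)}}\le1$ turn the first factor into exactly $c\,r^{-n(q-p)/(p(1+\sigma_0))}$, which together with $r^\alpha$ gives $r^{\alpha(\sigma_0)}$. The second factor is $(\fint_{B_r}(|Dw|^p)^{\theta_2})^{1/\theta_2}$ with $\theta_2=s/p$. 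The pointwise bound $|Dw|^p\lesssim H_{|Q|}(x^-_{x_0,r},|Dw|)+|Q|^p$ and $r^{\alpha(\sigma_0)}\le1$ then finish the argument. The fallback via Lemma~\ref{lem:sobolevpoincare} is unnecessary.

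The one thing to fix is the direction in which you unify the exponents. You cannot \emph{shrink} $\theta$. The map $t\mapsto(\fint_{B_r}g^t\,dx)^{1/t}$ is nondecreasing, so the bounds with $\theta_1$ (from the frozen term) and $\theta_2$ (from the error term) both imply the bound with $\theta:=\max\{\theta_1,\theta_2\}\in(0,1)$. Neither implies the bound with $\min\{\theta_1,\theta_2\}$. Taking the maximum, which still depends only on $n,p,q$, yields the lemma exactly as stated.
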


{The following lemma about the almost concave condition is from \cite[Lemma 2.2]{O18}.  
\begin{lem}
    Let $\Psi : [0,\infty)\to[0,\infty)$ be nondecreasing and such that $t\mapsto\frac{\Psi(t)}{t}$ is nonincreasing. Then there exists a concave function $\Bar{\Psi}\ :\ [0,\infty)\to[0,\infty)$ such that
    \begin{equation*}
        \frac{1}{2}\Bar{\Psi}(t)\leq\Psi(t)\leq\Bar{\Psi}(t)\qquad\text{ for all }\ t\geq0.
    \end{equation*}
\end{lem}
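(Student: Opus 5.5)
Let me take $\bar\Psi$ to be the least concave majorant of $\Psi$, namely
\[
\bar\Psi(t):=\inf\{\ell(t)\ :\ \ell(s)=\alpha s+\beta\ \text{with}\ \alpha,\beta\ge 0,\ \ell\ge\Psi \text{ on }[0,\infty)\}.
\]
The first task is to check that the admissible family of majorants is nonempty. Because $\Psi(s)/s$ is nonincreasing, for $s\ge1$ we get $\Psi(s)\le \Psi(1)s$, and because $\Psi$ is nondecreasing, for $s\le1$ we get $\Psi(s)\le\Psi(1)$. Consequently $\ell(s)=\Psi(1)(1+s)$ is admissible. Once this is settled, $\bar\Psi$ is an infimum of affine nonnegative functions, so it is concave, nonnegative and finite, and by construction $\Psi\le\bar\Psi$. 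Only the lower bound $\bar\Psi\le 2\Psi$ remains.

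For the lower bound, fix $t>0$ and write down one explicit affine majorant, $\ell_t(s):=\Psi(t)+\frac{\Psi(t)}{t}s$. If $s\le t$, monotonicity gives $\Psi(s)\le\Psi(t)\le\ell_t(s)$. If $s\ge t$, monotonicity of $\Psi(s)/s$ gives $\Psi(s)\le \frac{\Psi(t)}{t}s\le\ell_t(s)$. Thus $\ell_t$ is admissible, since its coefficients are nonnegative, and evaluating at $s=t$ yields $\bar\Psi(t)\le\ell_t(t)=2\Psi(t)$.

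The case $t=0$ needs separate treatment, and it is the only delicate point. Here $\ell_\varepsilon$ gives $\bar\Psi(0)\le\Psi(\varepsilon)$ for every $\varepsilon>0$, so $\bar\Psi(0)\le\lim_{\varepsilon\to0^+}\Psi(\varepsilon)$, and this limit exists by monotonicity. If $\Psi$ is right-continuous at $0$, this is exactly $\bar\Psi(0)\le\Psi(0)\le 2\Psi(0)$. Otherwise I would redefine $\bar\Psi(0):=\Psi(0)$. Lowering the value at the endpoint keeps the function concave: the chord inequality only becomes easier when the endpoint value decreases, and $\bar\Psi(0)\ge\Psi(0)$ still holds. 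This keeps $\bar\Psi(0)\ge\Psi(0)$ and $\bar\Psi\ge0$.

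Combining these steps gives $\tfrac12\bar\Psi\le\Psi\le\bar\Psi$ on $[0,\infty)$. The concavity of $\bar\Psi$ is automatic from the infimum construction, so I do not expect any step to be a genuine obstacle; the whole argument rests on the two-regime comparison that shows $\ell_t$ is a majorant.
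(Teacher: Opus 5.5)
Your proof is correct and complete. The paper does not prove this lemma itself; it only quotes it from \cite[Lemma 2.2]{O18}, so there is no in-paper argument to compare with, and your proposal works as a self-contained justification.

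The key step is the two-regime check that $\ell_t(s)=\Psi(t)+\frac{\Psi(t)}{t}s$ majorizes $\Psi$: monotonicity of $\Psi$ handles $s\le t$, and monotonicity of $\Psi(s)/s$ handles $s\ge t$. This is exactly what gives the factor $2$. You could streamline the construction by taking directly $\bar\Psi(t):=\inf_{s>0}\bigl(\Psi(s)+\frac{\Psi(s)}{s}t\bigr)$ for $t>0$. Your $\ell_1$ then already shows the family is nonempty, so the separate check with $\Psi(1)(1+s)$ becomes redundant.

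There is one small imprecision in terminology. The infimum of affine majorants is upper semicontinuous, so it is the least \emph{upper semicontinuous} concave majorant, not the least concave majorant. On $[0,\infty)$ these two can differ at $t=0$. This is why your endpoint correction is genuinely needed and not just a precaution. For example, $\Psi(0)=1$ with $\Psi\equiv 5$ on $(0,\infty)$ satisfies the hypotheses, yet every affine majorant has $\ell(0)\ge 5>2\Psi(0)$.

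Your fix, lowering the value at $0$ to $\Psi(0)$, is valid. In the chord inequality with $\lambda\in(0,1)$, the point $0$ can only occur as an endpoint and never as the interior point, so lowering it cannot break concavity. You may as well set $\bar\Psi(0):=\Psi(0)$ unconditionally.
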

}
\subsection{Approximation and regularity results for autonomous problems}
We begin with the version of $\mathcal{A}$-harmonic approximation from \cite[Lemma 2.7]{CO20}, which originally refers to \cite[Theorem 14]{DLSV12}.
\begin{lem}\label{Aharmonicapproximation}
    Let $\mathcal{A}$ be a bilinear form on $\mathbb{R}^{N\times n}$ satisfying the Legendre-Hadamard condition, that is, there are $0< \nu_\mathcal{A} \le L_\mathcal{A}$ such that for every $b\in\mathbb{R}^N$ and  $z\in\mathbb{R}^n$,
    \begin{equation}\label{eq:LH1}
        \nu_\mathcal{A}|b|^2|z|^2\leq\langle\, \mathcal{A}(b\otimes z)\, |\, (b\otimes z)\, \rangle\leq L_\mathcal{A}|b|^2|z|^2,
    \end{equation}
    and let $\varphi$ be an N-function with $\varphi,\varphi^*$ satisfying $\Delta_2$-condition, and let $\sigma,\mu>0$. Then for every $\varepsilon>0$, there exists $\delta=\delta(n,N,\nu_\mathcal{A},L_\mathcal{A},\Delta_2(\varphi,\varphi^*),\sigma,\varepsilon)>0$ such that whenever $w\in W^{1,\varphi}(B_r,\R^N)$ satisfies
    \begin{equation*}
        \fint_{B_r}\varphi(|Dw|)\, dx\leq\left(\fint_{B_r}\varphi(|Dw|)^{1+\sigma}\, dx\right)^\frac{1}{1+\sigma}\leq\varphi(\mu),
    \end{equation*}
    and is almost $\mathcal{A}$-harmonic in $B_r$ in the sense that
    \begin{equation}\label{eq:almostAsense}
        \left|\fint_{B_r}\langle\, \mathcal{A}Dw\, |\, D\zeta\, \rangle\, dx\right|\leq\delta\mu\|D\zeta\|_{L^\infty(B_r)}
    \end{equation}
  for all $\zeta\in W_0^{1,\infty}(B_r,\R^{N})$, we have
    \begin{equation}\label{eq:Aapproximatesense}
        \fint_{B_r}\varphi\left(\frac{|w-h|}{r}\right)\, dx+\fint_{B_r}\varphi(|Dw-Dh|)\, dx\leq\varepsilon\varphi(\mu),
    \end{equation}
    where $h\in W^{1,\varphi}(B_r,\R^N)$ is the unique weak solution of the problem
    \begin{equation*}
        \begin{dcases}
            -\div(\mathcal{A}Dh)=0&\text{in }\ B_r,\\
            h =w&\text{on }\ \partial B_r.
        \end{dcases}
    \end{equation*}
\end{lem}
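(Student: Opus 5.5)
This result is quoted from \cite[Lemma 2.7]{CO20} and \cite[Theorem 14]{DLSV12}. If I had to prove it myself, I would follow the direct Lipschitz-truncation approach of \cite{DLSV12} rather than a compactness argument. By scaling, $x\mapsto x_0+rx$ and $w\mapsto w/r$, it suffices to treat $B_r=B_1$. Set $z:=w-h\in W^{1,\varphi}_0(B_1,\R^N)$. For every $\zeta\in W^{1,\infty}_0(B_1,\R^N)$ we have
\begin{equation*}
\fint_{B_1}\langle\,\mathcal{A}Dz\,|\,D\zeta\,\rangle\,dx=\fint_{B_1}\langle\,\mathcal{A}Dw\,|\,D\zeta\,\rangle\,dx,
\end{equation*}
so $z$ inherits the almost-$\mathcal{A}$-harmonicity (\ref{eq:almostAsense}). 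Since $\mathcal{A}$ has constant coefficients and satisfies (\ref{eq:LH1}), the $L^s$ (and Orlicz) Calder\'on--Zygmund theory for the Dirichlet problem gives $\fint\varphi(|Dh|)^{1+\sigma'}\lesssim\fint\varphi(|Dw|)^{1+\sigma'}$ for some $\sigma'\le\sigma$. Consequently
\begin{equation*}
\Bigl(\fint_{B_1}\varphi(|Dz|)^{1+\sigma'}\,dx\Bigr)^{\frac{1}{1+\sigma'}}\lesssim\varphi(\mu).
\end{equation*}

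The main obstacle is that the Legendre--Hadamard condition gives no pointwise coercivity, so we cannot test with $z$ itself and argue as in the strongly elliptic case. I would work by duality instead. Write $\fint\varphi(|Dz|)\sim\fint\langle\,\varphi'(|Dz|)\frac{Dz}{|Dz|}\,|\,Dz\,\rangle$ and set $G:=\varphi'(|Dz|)\frac{Dz}{|Dz|}$, which lies in $L^{\varphi^*}$ with the corresponding higher integrability. Let $\xi\in W^{1,\varphi^*}_0(B_1)$ solve the adjoint problem $-\div(\mathcal{A}^{T}D\xi)=-\div G$. The Calder\'on--Zygmund estimate in $L^{\varphi^*}$ gives $\fint\varphi^*(|D\xi|)^{1+\sigma'}\lesssim\fint\varphi^*(|G|)^{1+\sigma'}$, and then
\begin{equation*}
\fint_{B_1}\langle\,G\,|\,Dz\,\rangle\,dx=\fint_{B_1}\langle\,\mathcal{A}Dz\,|\,D\xi\,\rangle\,dx.
\end{equation*}

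Now $\xi$ is not Lipschitz, so I would replace it by its Lipschitz truncation $\xi_\lambda$ at level $\lambda$. This satisfies $\|D\xi_\lambda\|_\infty\lesssim\lambda$ and $\xi_\lambda=\xi$ outside a set $E_\lambda$ with $|E_\lambda|\lesssim\varphi^*(\lambda)^{-1}\fint\varphi^*(|D\xi|)$, or the level-set version of this bound. The term $\fint\langle\,\mathcal{A}Dz\,|\,D\xi_\lambda\,\rangle=\fint\langle\,\mathcal{A}Dw\,|\,D\xi_\lambda\,\rangle$ is at most $\delta\mu\lambda$ by (\ref{eq:almostAsense}). The error $\fint\langle\,\mathcal{A}Dz\,|\,D\xi-D\xi_\lambda\,\rangle$ is supported on $E_\lambda$, and I would bound it using Young's inequality (\ref{eq:Nyoung}), H\"older's inequality, and the higher integrability of $Dz$ and $D\xi$. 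This gives a bound of the form $c|E_\lambda|^{\sigma'/(1+\sigma')}\varphi(\mu)$, which tends to $0$ as $\lambda\to\infty$, uniformly in $w$.

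Normalizing so that $\varphi^*(|G|)\sim\varphi(|Dz|)\lesssim\varphi(\mu)$, and using (\ref{eq:conjugatecompositedifferentiation}), I expect to reach
\begin{equation*}
\fint_{B_1}\varphi(|Dz|)\,dx\le c\,\delta\mu\lambda+o_{\lambda\to\infty}(1)\,\varphi(\mu).
\end{equation*}
To make this scale-invariant, I would take $\lambda=K\varphi'(\mu)$. Then $\delta\mu\lambda\sim\delta K\varphi(\mu)$, so choosing $K$ large first and then $\delta$ small gives $\fint\varphi(|Dz|)\le\frac{\varepsilon}{2}\varphi(\mu)$. Finally, the Poincar\'e inequality in $W^{1,\varphi}_0$ controls $\fint\varphi(|z|/r)$ by $\fint\varphi(|Dz|)$, which yields (\ref{eq:Aapproximatesense}). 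The delicate points are the uniform control of $|E_\lambda|$ together with the Orlicz Calder\'on--Zygmund estimate for Legendre--Hadamard systems, whose constants depend only on $\Delta_2(\varphi,\varphi^*)$.
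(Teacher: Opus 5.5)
Your proposal is correct. The paper does not prove this lemma itself; it imports it from \cite[Lemma 2.7]{CO20} and \cite[Theorem 14]{DLSV12}, and your argument is essentially the direct Lipschitz-truncation proof behind that cited result: you pass the almost $\mathcal{A}$-harmonicity to $z=w-h$, use duality through the adjoint Dirichlet problem with Orlicz Calder\'on--Zygmund bounds, truncate the dual solution, and choose $\lambda=K\varphi'(\mu)$ scale-invariantly before fixing $\delta$. The two points you flag as delicate are exactly the ingredients that source supplies: uniform control of the bad set, and Orlicz estimates for Legendre--Hadamard systems with constants depending only on $\Delta_2(\varphi,\varphi^*)$.
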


The next result is an approximation result for almost minimizers for the $\phi$-energy functional $v\mapsto \int_{\Omega}\phi(|Dv|)\,dx$. In \cite[Lemma 4.13]{HO23}, the authors establish an approximation lemma for almost minimizers of energy functionals with general $\phi$‑growth and 
$\phi$‑ellipticity in the scalar case. The same argument applies to the $\phi$‑energy functional in the vectorial case. Therefore, we state the following approximation result without providing a proof.

\begin{lem}[$\phi$-minimizing approximation]\label{lem:minimizingapproximation}
    Let $\phi\in C^1([0,\infty))\cap C^2((0,\infty))$ satisfy  \eqref{eq:N'incrange}, $\delta\in(0,1)$, and $\mu>0$.  Suppose that $w\in W^{1,\varphi}(B_r,\R^N)$ satisfies  that  for some $\sigma,\gamma,\tilde c>0$,
    \begin{equation*}
        \fint_{B_r}\phi(|Dw|)^{1+\sigma}\, dx\leq \bar{c}\mu^{1+\sigma}
    \end{equation*}
    and
    \begin{equation*}
        \fint_{B_{r}} \phi(|Dw|)\, dx\leq\fint_{B_{r}}\phi(|Dw+D\zeta|)\, dx+\tilde c\delta\left(\frac{\|D\zeta\|_{L^\infty(B_r)}}{\phi^{-1}(\mu)}+1\right)^\gamma \mu
    \end{equation*}
 for all $\zeta\in W_0^{1,\infty}(B_r)$.
    Then there exists $c=c(n,N,p_1,p_2,\sigma,\gamma,\bar{c},\tilde c)>0$ such that
    \begin{equation*}
        \fint_{B_{r}}\frac{\phi'(|Dw|+|Dh|)}{|Dw|+|Dh|}|Dw-Dh|^2\, dx\leq c\delta^{\Bar{\gamma}}\mu,
    \end{equation*}
    where $\Bar{\gamma}:=\frac{\sigma p_1}{\gamma+\sigma p_1}$ and $h \in w+ W_0^{1,\phi}(B_r(x_0),\R^N)$ is the minimizer of the energy functional  
 $$
 w+ W_0^{1,\phi}(B_r,\R^N)\ni v \ \ \mapsto\ \  \int_{B_r} \phi(|Dv|)\, dx.
 $$
\end{lem}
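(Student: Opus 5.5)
The plan is to compare the energies of $w$ and $h$ from both sides: convexity and the Euler--Lagrange equation of $h$ give a lower bound for $\int\phi(|Dw|)-\int\phi(|Dh|)$, and the almost-minimality of $w$, tested with a Lipschitz truncation of $h-w$, gives an upper bound. This follows the scalar argument of \cite[Lemma 4.13]{HO23}, and nothing in it uses $N=1$.

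\emph{Lower bound.} Since $h$ minimizes the $\phi$-energy in $w+W^{1,\phi}_0(B_r,\R^N)$, it is a weak solution of $\div\big(\phi'(|Dh|)\frac{Dh}{|Dh|}\big)=0$. For every $P,Q\in\R^{N\times n}$ we have
\[
\phi(|P|)-\phi(|Q|)-\Big\langle \phi'(|Q|)\tfrac{Q}{|Q|}\,\Big|\,P-Q\Big\rangle\ \gtrsim\ \frac{\phi'(|P|+|Q|)}{|P|+|Q|}|P-Q|^2 .
\]
This follows from Taylor's formula, \eqref{eq:N'incrange} and Lemma~\ref{lem:phi''line}. Integrating with $P=Dw$, $Q=Dh$ and using the weak equation with test function $w-h\in W^{1,\phi}_0$ removes the linear term. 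Hence
\[
\fint_{B_r}\frac{\phi'(|Dw|+|Dh|)}{|Dw|+|Dh|}|Dw-Dh|^2\,dx\ \lesssim\ \fint_{B_r}\phi(|Dw|)\,dx-\fint_{B_r}\phi(|Dh|)\,dx .
\]
It remains to bound the right-hand side by $c\,\delta^{\bar\gamma}\mu$.

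\emph{Upper bound via truncation.} The function $h-w$ is not Lipschitz, so it cannot be plugged into the almost-minimality inequality directly. For a level $\lambda>0$ I take the Lipschitz truncation $\zeta_\lambda\in W^{1,\infty}_0(B_r,\R^N)$ of $h-w$, as in \cite{DLSV12}. It satisfies $\|D\zeta_\lambda\|_\infty\le c\lambda$ and $\zeta_\lambda=h-w$ outside a bad set $E_\lambda\subset\{M(|Dh-Dw|)>\lambda\}$, where $|E_\lambda|\lesssim \int_{\{|Dh-Dw|>\lambda/2\}}|Dh-Dw|/\lambda$. Testing with $\zeta_\lambda$ gives
\[
\fint\phi(|Dw|)\le\fint_{B_r\setminus E_\lambda}\phi(|Dh|)+\frac{1}{|B_r|}\int_{E_\lambda}\phi(|Dw|+c\lambda)+\tilde c\,\delta\Big(\frac{c\lambda}{\phi^{-1}(\mu)}+1\Big)^\gamma\mu .
\]
For the integral over $E_\lambda$, I first need higher integrability of $Dh$ up to the boundary, namely $\fint\phi(|Dh|)^{1+\sigma'}\lesssim\mu^{1+\sigma'}$ for some $0<\sigma'\le\sigma$. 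This comes from a global Gehring argument for the $\phi$-minimizer with boundary datum $w$, using the hypothesis $\fint\phi(|Dw|)^{1+\sigma}\le\bar c\mu^{1+\sigma}$; shrinking $\sigma$ only changes constants. With this bound, Chebyshev and H\"older give
\[
\frac{1}{|B_r|}\int_{E_\lambda}\phi(|Dw|+|Dh|+\lambda)\,dx\ \lesssim\ \Big(\frac{\mu}{\phi(\lambda)}\Big)^{\sigma}\mu .
\]
The integral of $\phi(|Dh|)$ over $E_\lambda$ is estimated in the same way.

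\emph{Choice of the level.} Set $\lambda=t\,\phi^{-1}(\mu)$ with $t\ge1$. Since $\phi(t\phi^{-1}(\mu))\ge t^{p_1}\mu$, the two error terms become $\delta t^\gamma\mu$ and $t^{-\sigma p_1}\mu$. Balancing them with $t=\delta^{-1/(\gamma+\sigma p_1)}$ makes both equal to $\delta^{\sigma p_1/(\gamma+\sigma p_1)}\mu=\delta^{\bar\gamma}\mu$, which gives the claimed estimate.

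The main obstacle is the truncation step. One has to produce $\zeta_\lambda$ in the vectorial Orlicz setting with zero boundary values, and control the bad set through the maximal function of $Dh-Dw$. This in turn requires the global higher integrability of $Dh$, which has to be derived from that of $Dw$. I would take both the truncation and the Gehring-type estimates from \cite{DLSV12}; everything else is routine.
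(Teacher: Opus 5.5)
The paper gives no proof of this lemma. It defers to \cite[Lemma 4.13]{HO23} and asserts that the scalar argument carries over to systems. Your outline is a sound reconstruction of that kind of argument: convexity plus the Euler--Lagrange system of $h$ for the lower bound, Lipschitz truncation of $h-w$ at level $\lambda=t\,\phi^{-1}(\mu)$ for the upper bound, then optimization in $t$. The lower bound is correct. The term $\int_{E_\lambda}\phi(|Dh|)\,dx$ enters with a favorable sign and need not be estimated at all. The choice $t=\delta^{-1/(\gamma+sp_1)}$ correctly balances $\delta t^{\gamma}$ against $t^{-sp_1}$, for whatever integrability exponent $s$ of $Dh-Dw$ is actually available.

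\textbf{The gap is in that exponent.} After multiplying by $\phi(\lambda)$, your bad-set bound reads $\phi(\lambda)|E_\lambda|\lesssim\int_{\{|Dh-Dw|>\lambda/2\}}\phi(|Dh-Dw|)\,dx$. This needs a quantitative tail estimate for $\phi(|Dh|)$. A global Gehring argument for the boundary problem only gives $\fint_{B_r}\phi(|Dh|)^{1+\sigma_1}\,dx\lesssim\mu^{1+\sigma_1}$ for some $\sigma_1\in(0,\sigma]$ fixed by the data, in general strictly smaller than $\sigma$; this is the $\sigma'$ you introduced yourself.

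Your assertion that shrinking $\sigma$ ``only changes constants'' is false. The bad-set term is then only $\lesssim(\mu/\phi(\lambda))^{\sigma_1}\mu\le t^{-\sigma_1p_1}\mu$. The optimization therefore yields $c\,\delta^{\sigma_1p_1/(\gamma+\sigma_1p_1)}\mu$, which for $\sigma_1<\sigma$ is strictly weaker than $c\,\delta^{\bar\gamma}\mu$ because $\delta<1$. Lowering $\sigma$ in the hypothesis is legitimate, but the exponent in the conclusion moves with it.

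So your argument proves the lemma only when $\sigma$ does not exceed the Gehring exponent for $h$, or else with $\bar\gamma$ computed from $\sigma_1$. To get the stated $\bar\gamma$ for every $\sigma>0$ you would need $\fint_{B_r}\phi(|Dh|)^{1+\sigma}\,dx\lesssim\fint_{B_r}\phi(|Dw|)^{1+\sigma}\,dx$ with the full $\sigma$. That is a global Calder\'on--Zygmund estimate for the $\phi$-Laplace system on the ball, which Gehring's lemma does not give.

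For the paper this is harmless, since in Lemma~\ref{lem:degexcessdecay0} only the smallness of $C_0\delta^{\bar\gamma}$ is used. You should nevertheless either state the result with $\sigma_1$ in place of $\sigma$ in $\bar\gamma$, or supply the stronger global estimate.
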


The following lemma provides the supremum estimate and the excess decay estimate for the gradient of  $\overline{\phi}$-harmonic mappings, where
    \begin{equation}\label{eq:phiD}
        \overline{\phi}(t):=t^p+at^q\quad\text{ with }\quad1<p\leq q\quad\text{ and }\quad a\geq0.
        \end{equation}
For these estimates, we refer to \cite[Lemma 5.8 and Theorem 6.4]{DSV09} in the case $\phi=\overline{\phi}$.
\begin{lem}\label{lem:gamma0}
    Let    $\overline{\phi}:[0,\infty)\to [0,\infty)$ be as above in \eqref{eq:phiD}. 
    There exist constants $c>0$ and $\gamma_0\in(0,1)$ depending on $n$, $N$, $p$ and $q$ and  independent of $a\ge 0$, such that if $\bar{h}\in W^{1,\overline{\phi}}(B_R(x_0),\R^N)$ is a weak solution to the $\overline\phi$-Laplace system
    \begin{equation*}
        \div\left(\frac{\overline{\phi}'(|D\bar{h}|)}{|D\bar{h}|}D\bar{h}\right)=0\quad\text{ in }\quad B_R(x_0),
    \end{equation*}
    then  we have that for every $\tau\in(0,1]$,
    \begin{equation*}
        \sup_{B_{\frac{\tau R}{2}}(x_0)}\overline{\phi}(|D\bar{h}|)\leq c\fint_{B_{\tau R}(x_0)}\overline{\phi}(|D\bar{h}|)\, dx,
    \end{equation*}
    and
    \begin{equation*}
        \fint_{B_{\tau R}(x_0)}|V_{\overline{\phi}}(D\bar{h})-(V_{\overline{\phi}}(D\bar{h}))_{x_0,\tau R}|^2\, dx\leq c\tau^{2\gamma_0}\fint_{B_{R}(x_0)}|V_{\overline{\phi}}(D\bar{h})-(V_{\overline{\phi}}(D\bar{h}))_{x_0, R}|^2\, dx.
    \end{equation*}
\end{lem}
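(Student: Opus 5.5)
The plan is to reduce the statement to the general Orlicz theory of \cite{DSV09} and check that every constant there depends on $\overline{\phi}$ only through its characteristics $p_1,p_2$ in \eqref{eq:N'incrange}, never on $a$. The first step is elementary: for $t>0$,
\begin{equation*}
\frac{t\,\overline{\phi}''(t)}{\overline{\phi}'(t)}=\frac{p(p-1)t^{p-1}+a\,q(q-1)t^{q-1}}{p\,t^{p-1}+a\,q\,t^{q-1}}\in[p-1,\,q-1],
\end{equation*}
since this is a convex combination of $p-1$ and $q-1$ with nonnegative weights $p\,t^{p-1}$ and $a\,q\,t^{q-1}$. Hence $\overline{\phi}$ satisfies \eqref{eq:N'incrange} with $p_1=p$ and $p_2=q$ for every $a\ge 0$. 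Consequently $\Delta_2(\overline{\phi},\overline{\phi}^*)$, the equivalences \eqref{eq:Neqv}, \eqref{eq:shifteqv}, \eqref{eq:Vtoshift} and Lemma \ref{lem:phi''line} all hold with constants depending only on $p,q$.

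Next I would go through the two results of \cite{DSV09} and note where the structure enters. I would apply both with the ball $B_{\tau R}(x_0)$ in place of the original ball, which is legitimate because the estimates are scale invariant and a weak solution on $B_R$ is also one on every $B_{\tau R}$.

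For the sup bound (Lemma 5.8 there), the argument runs as follows. Difference quotients give $V_{\overline{\phi}}(D\bar h)\in W^{1,2}_{\mathrm{loc}}$. Differentiating the system then shows that a convex function of $|D\bar h|$, comparable to $\overline{\phi}(|D\bar h|)$, is a weak subsolution of a linear elliptic equation. The ellipticity ratio of that equation is bounded by $\sup t\overline{\phi}''/\overline{\phi}'$ divided by $\inf t\overline{\phi}''/\overline{\phi}'$, up to factors, hence by a function of $p,q$ alone. A Moser (or De Giorgi) iteration then yields $\sup_{B_{\tau R/2}}\overline{\phi}(|D\bar h|)\le c\fint_{B_{\tau R}}\overline{\phi}(|D\bar h|)\,dx$ with $c=c(n,N,p,q)$.

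For the excess decay (Theorem 6.4 there), I would follow the usual degenerate/nondegenerate alternative.
\begin{itemize}
\item[(i)] If the excess of $V_{\overline{\phi}}(D\bar h)$ is small compared with $|(V_{\overline{\phi}}(D\bar h))|^2$ on a ball, then $D\bar h$ stays near a fixed matrix $Q$. In that case the linearized system, with coefficients $\partial^2\overline{\phi}(|\cdot|)$ at $Q$ normalized by $\overline{\phi}''(|Q|)$, is uniformly elliptic with ratio bounded by $p,q$. Campanato estimates for linear systems give decay at rate $\tau^2$.
\item[(ii)] Otherwise the sup bound together with the Caccioppoli inequality for $V_{\overline{\phi}}(D\bar h)$ gives decay by a fixed factor.
\end{itemize}
Iterating the alternative yields the power $\tau^{2\gamma_0}$ with $\gamma_0=\gamma_0(n,N,p,q)$.

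The only genuine issue, and the step I expect to need most care, is confirming that no hidden dependence on $a$ enters \cite{DSV09}. In particular:
\begin{itemize}
\item[(a)] the Caccioppoli constants for $V_{\overline{\phi}}$ must depend only on $p,q$;
\item[(b)] the linearization constants in case (i) must depend only on $p,q$;
\item[(c)] the qualitative regularity needed to justify the difference quotients must not rely on smallness or bounds of $a$.
\end{itemize}
Since \cite{DSV09} states its constants in terms of the characteristics of the N-function, I would verify (a)--(c) there and conclude with $c,\gamma_0$ depending only on $n,N,p,q$.
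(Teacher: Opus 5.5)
Your proposal matches the paper, which gives no independent proof of this lemma. It simply invokes \cite[Lemma 5.8 and Theorem 6.4]{DSV09} and relies on exactly your observation: $\overline{\phi}$ satisfies \eqref{eq:N'incrange} with $p_1=p$, $p_2=q$ for every $a\ge 0$, so all constants there depend only on $n,N,p,q$.

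Your sketch of the degenerate alternative is only heuristic, since the sup bound plus a Caccioppoli inequality merely bounds the excess rather than making it decay; actual decay requires an additional argument such as a De Giorgi-type reduction of the supremum for the subsolution $\overline{\phi}(|D\bar h|)$. Because your argument ultimately rests on the citation, this does not affect its correctness.
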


\section{Caccioppoli and Self-improving estimates}

We derive various self-improving estimates, also known as higher integrability estimates, for the gradients of minimizers of $\mathcal F$ in \eqref{F}. To this end, we obtain Caccioppoli type estimates for the minimizers.  

We will use the following two lemmas. The first is a technical iteration lemma from \cite[Chapter~V, Lemma~3.1]{Gia83}, and the second is a variant of the results by Gehring \cite{Gh73} and by Giaquinta and Modica \cite[Theorem~6.6]{Giusti}.

\begin{lem}\label{lem:it}
    Let $g:[r,R]\subset\mathbb{R}^+\to [0,\infty)$ be a bounded function. Assume further that for all $r\leq\rho_1<\rho_2 \leq R$ we have
    \begin{equation*}
        g(\rho_1)\leq \theta g(\rho_2) + C_1(\rho_2-\rho_1)^{-\gamma_1}+C_2
    \end{equation*}
    for some nonnegative constants $C_1$ and $C_2$, nonnegative exponent $\gamma_1$, and a parameter $\theta\in[0,1)$. Then we have
    \begin{equation*}
        g(r)\le  c \big( C_1(R-r)^{-\gamma_1}+C_2\big)
    \end{equation*}
    for some $c=c(\theta,C_1,C_2)>0$.
\end{lem}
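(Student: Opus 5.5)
The plan is the classical telescoping/iteration argument for Lemma~\ref{lem:it}. Fix $\lambda\in(0,1)$, to be chosen depending only on $\theta$ and $\gamma_1$. Build an increasing sequence of radii by $\rho_0:=r$ and $\rho_{i+1}:=\rho_i+(1-\lambda)\lambda^i(R-r)$. Then $\rho_i\nearrow R$ and $\rho_i\le R$ for all $i$, and $\rho_{i+1}-\rho_i=(1-\lambda)\lambda^i(R-r)$.

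Applying the hypothesis with $\rho_1=\rho_i$ and $\rho_2=\rho_{i+1}$ gives
\[
g(\rho_i)\le\theta g(\rho_{i+1})+C_1(1-\lambda)^{-\gamma_1}\lambda^{-i\gamma_1}(R-r)^{-\gamma_1}+C_2 .
\]
Iterating this from $i=0$ to $k-1$ yields
\[
g(r)\le\theta^k g(\rho_k)+\sum_{i=0}^{k-1}\theta^i\Big(C_1(1-\lambda)^{-\gamma_1}\lambda^{-i\gamma_1}(R-r)^{-\gamma_1}+C_2\Big).
\]

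Now choose $\lambda$ so that $\theta\lambda^{-\gamma_1}<1$; for instance $\lambda^{\gamma_1}=\frac{1+\theta}{2}$ works if $\gamma_1>0$, and any $\lambda$ works if $\gamma_1=0$. With this choice both geometric series $\sum_i(\theta\lambda^{-\gamma_1})^i$ and $\sum_i\theta^i$ converge, with sums bounded in terms of $\theta$ and $\gamma_1$ only.

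Since $g$ is bounded on $[r,R]$, say by $M$, we have $\theta^k g(\rho_k)\le\theta^kM\to0$ as $k\to\infty$, because $\theta<1$. Letting $k\to\infty$ therefore gives
\[
g(r)\le c(\theta,\gamma_1)\big(C_1(R-r)^{-\gamma_1}+C_2\big),
\]
which is the claim; the constant in fact depends only on $\theta$ and $\gamma_1$.

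The only delicate point is the choice of $\lambda$, which must balance the growth $\lambda^{-i\gamma_1}$ coming from the shrinking gaps against the decay $\theta^i$. The boundedness of $g$ is used precisely to kill the remainder term $\theta^k g(\rho_k)$. No earlier results of the paper are needed.
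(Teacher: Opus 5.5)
Your proof is correct and follows essentially the same approach as the classical iteration argument of \cite[Chapter~V, Lemma~3.1]{Gia83}, which the paper cites rather than proves: the same radii $\rho_{i+1}=\rho_i+(1-\lambda)\lambda^i(R-r)$, the same choice $\theta\lambda^{-\gamma_1}<1$, and the boundedness of $g$ used to make $\theta^k g(\rho_k)$ vanish. Your constant $c(\theta,\gamma_1)$ is the correct dependence; the paper's $c(\theta,C_1,C_2)$ appears to be a slip, since the estimate is linear in $C_1$ and $C_2$.
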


\begin{lem}\label{lem:gehring}
    Let $B_0\subset\mathbb{R}^n$ be a ball, $g\in L^1(B_0)$ and $h\in L^{s_0}(B_0)$ for some $s_0>1$. Assume that for some $\gamma\in(0,1)$, $c_1>0$ and all balls $B$ with $2B\subset B_0$, where $2B$ denotes the ball centered at the same point as $B$ with radius twice that of $B$,
    \begin{equation*}
        \fint_B|g|\, dx\leq c_1\left(\fint_{2B}|g|^\gamma\, dx\right)^\frac{1}{\gamma}+c_1\fint_{2B}|h|\, dx.
    \end{equation*}
    Then there exist $s_1,c_2>0$ depending on $s_0$, $\gamma$, $c_1$ and $n$ such that $h\in L_{\mathrm{loc}}^{s_1}(B)$ and for all $s_2\in[1,s_1]$,
    \begin{equation*}
        \left(\fint_{B}|g|^{s_2}\, dx\right)^\frac{1}{s_2}\leq c_2\fint_{2B}|g|\, dx+c_2\left(\fint_{2B}|h|^{s_2}\, dx\right)^\frac{1}{s_2}.
    \end{equation*}
\end{lem}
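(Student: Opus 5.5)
The last statement is Lemma \ref{lem:gehring}, a Gehring-type lemma with a reverse Hölder inequality carrying a lower exponent $\gamma<1$ on the right. I would reduce it to the classical Gehring--Giaquinta--Modica lemma, \cite[Theorem~6.6]{Giusti}, in its standard form. That form says: if $G\ge0$, $F\ge0$, $F\in L^{s_0}$, and
\[
\fint_B G^{m}\,dx\le b\Big(\fint_{2B}G\,dx\Big)^{m}+b\fint_{2B}F^{m}\,dx+\vartheta\fint_{2B}G^{m}\,dx
\]
for all $2B\subset B_0$ with $m>1$ and $\vartheta<1$, then $G\in L^{m+\epsilon}_{\mathrm{loc}}$ with the corresponding reverse Hölder estimate.

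\textbf{Reduction.} Set $m:=1/\gamma>1$, $G:=|g|^\gamma\in L^{m}(B_0)$ and $F:=|h|^\gamma\in L^{ms_0}(B_0)$. The hypothesis then reads
\[
\fint_B G^{m}\,dx\le c_1\Big(\fint_{2B}G\,dx\Big)^{m}+c_1\fint_{2B}F^{m}\,dx,
\]
which is the standard form with $\vartheta=0$. Note that $F^m=|h|\in L^{s_0}$.

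\textbf{Core argument.} If one wants to prove this rather than cite it, I would argue directly.
\begin{enumerate}
\item Normalize $B_0$ and consider the weighted function $G\,\mathrm{dist}(x,\partial B_0)^{n/m}$, or equivalently work on nested balls.
\item For each level $t$ above a threshold fixed by $\fint_{B_0}G^m$ and $\fint_{B_0}F^m$, run a Calderón--Zygmund / Vitali covering of the superlevel set $\{G^m>t\}$ by balls $B$ on which $\fint_B G^m\sim t$.
\item On each such ball, apply the hypothesis and split $\fint_{2B}G$ according to whether $G^m\le \eta t$ or not. This gives the level-set inequality
\[
\int_{\{G^m>t\}}G^m\le c\,t^{1-1/m}\int_{\{G^m>\eta t\}}G+c\int_{\{F^m>\eta t\}}F^m .
\]
\item Multiply by $t^{\epsilon-1}$, integrate in $t$, and use Fubini. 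For $\epsilon$ small, absorb the $G$ term into the left-hand side; the $F$ term is controlled because $F^m\in L^{s_0}$.
\end{enumerate}
The result is
\[
\Big(\fint_B G^{m(1+\epsilon)}\,dx\Big)^{\frac1{1+\epsilon}}\le c\fint_{2B}G^m\,dx+c\Big(\fint_{2B}F^{m(1+\epsilon)}\,dx\Big)^{\frac1{1+\epsilon}}
\]
for some $\epsilon=\epsilon(n,\gamma,c_1,s_0)>0$, which is at most $s_0-1$.

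\textbf{Translation back.} Put $s_1:=1+\epsilon$. Since $G^{m}=|g|$ and $F^m=|h|$, the display above is the claim for $s_2=s_1$ (the hypothesis $h\in L^{s_0}$ gives $h\in L^{s_1}$). For $s_2\in[1,s_1]$, I would apply Hölder on the left, which gives $\|g\|_{L^{s_2}}\le\|g\|_{L^{s_1}}$ in averaged norms. I would then need to replace $\big(\fint|h|^{s_1}\big)^{1/s_1}$ by $\big(\fint|h|^{s_2}\big)^{1/s_2}$. The cleanest way is to rerun the level-set integration with exponent $s_2$ in place of $s_1$, which is allowed since $s_2-1\le\epsilon$. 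Along the way the radii $B\to2B$ may have to be adjusted through a covering argument, at the cost of constants.

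\textbf{Main obstacle.} The main difficulty is the level-set absorption step: choosing $\eta$ and $\epsilon$ so that the $G$ term can be absorbed. To make the absorption legitimate, I would first work with truncations $\min\{G,k\}$ so that all quantities are finite, and then let $k\to\infty$.
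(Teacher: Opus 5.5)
Your proposal is correct and matches the paper. The paper gives no separate proof; it invokes this lemma as a variant of Gehring's lemma and of \cite[Theorem~6.6]{Giusti}, and your substitution $m=1/\gamma$, $G=|g|^\gamma$, $F=|h|^\gamma$ is exactly what puts the hypothesis into that theorem's form (with $\vartheta=0$ and $F\in L^{ms_0}$, $ms_0>m$). Taking the exponent $p=ms_2$ there and raising to the power $m$ then gives the stated estimate for $s_2\in[1,s_1]$, so your level-set sketch is just the standard proof of the cited theorem and is not needed.
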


\subsection{Self-improving estimates for gradients}
We begin with a basic Caccioppoli-type estimate for the minimizers of (\ref{F}). Note that in this subsection, we may assume that the function $f(x,P)$ in \eqref{F} satisfies only \eqref{C2} and the first inequality in \eqref{ass:gc}.

\begin{lem}\label{lem:Caccioppoli1} 
Let $u\in W^{1,p}(\Omega,\mathbb{R}^N)$ with $H(\cdot,|Du|)\in L^1(\Omega)$ be a  minimizer of (\ref{F}). Then there exists $c=c(n,N,p,q,L,\nu)>0$ such that for all balls $B_{2r}(x_0)\Subset\Omega$, 
    \begin{equation*}
        \int_{B_r(x_0)}H(x,|Du|)\,dx\leq c\int_{B_{2r}(x_0)}H\left(x,\frac{|u-(u)_{x_0,2r}|}{r}\right)\,dx.
    \end{equation*}
\end{lem}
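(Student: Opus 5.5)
We follow the standard Widman hole-filling argument, using only \eqref{C2} and the first inequality in \eqref{ass:gc}. Fix $B_{2r}(x_0)\Subset\Omega$ and radii $r\le\rho_1<\rho_2\le 2r$. Take a cutoff $\eta\in C_c^\infty(B_{\rho_2}(x_0))$ with $0\le\eta\le1$, $\eta\equiv1$ on $B_{\rho_1}(x_0)$ and $|D\eta|\le 2/(\rho_2-\rho_1)$. Write $\bar u:=(u)_{x_0,2r}$ and use the competitor
\[
v:=u-\eta(u-\bar u)=(1-\eta)(u-\bar u)+\bar u .
\]
Then $v-u$ has compact support in $B_{\rho_2}(x_0)$. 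Moreover $Dv=(1-\eta)Du-D\eta\otimes(u-\bar u)$, so $H(\cdot,|Dv|)\in L^1$ whenever $H(\cdot,|u-\bar u|/(\rho_2-\rho_1))\in L^1$. This can be assumed, since otherwise the right-hand side is infinite. Hence $v$ is admissible and minimality gives $\int_{B_{\rho_2}}f(x,Du)\le\int_{B_{\rho_2}}f(x,Dv)$.

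Next subtract $f(x,\mathbf 0)$ from both sides; this is integrable by \eqref{C2}. Apply \eqref{C2} on the left and the growth bound $f(x,P)-f(x,\mathbf0)\le LH(x,|P|)$ on the right. Since $Dv=Du$ outside $\mathrm{supp}\,\eta$, we subtract the common contribution on $B_{\rho_2}\setminus\mathrm{supp}\,\eta$. More simply, we keep the whole integrals and use $\eta=1$ on $B_{\rho_1}$, where $Dv=-D\eta\otimes(u-\bar u)=0$. This yields
\[
\nu\int_{B_{\rho_1}}H(x,|Du|)\,dx\le L\int_{B_{\rho_2}\setminus B_{\rho_1}}H(x,|Dv|)\,dx .
\]
(The term $f(x,Du)-f(x,\mathbf0)\ge0$ on the annulus can be dropped from the left.) On the annulus, $|Dv|\le|Du|+|D\eta||u-\bar u|$. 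Convexity together with the $\Delta_2$ bound \eqref{eq:Nconstant}, uniform in $x$ with exponents $p,q$, gives
\[
H(x,|Dv|)\le c\,H(x,|Du|)+c\,H\Big(x,\frac{|u-\bar u|}{\rho_2-\rho_1}\Big).
\]

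The main obstacle is that the right-hand side still contains $\int_{B_{\rho_2}\setminus B_{\rho_1}}H(x,|Du|)$ with a large constant. We handle this by hole filling: add $cL\int_{B_{\rho_1}}H(x,|Du|)$ to both sides and divide by $\nu+cL$. Setting $g(\rho):=\int_{B_\rho}H(x,|Du|)\,dx$, which is bounded because $H(\cdot,|Du|)\in L^1$, we get
\[
g(\rho_1)\le\theta\, g(\rho_2)+c\int_{B_{2r}}H\Big(x,\frac{|u-\bar u|}{\rho_2-\rho_1}\Big)dx,
\qquad \theta=\frac{cL}{\nu+cL}<1 .
\]
To put this into the form required by Lemma \ref{lem:it}, use \eqref{eq:Nconstant} with $C=2r/(\rho_2-\rho_1)\ge1$:
\[
H\Big(x,\frac{t}{\rho_2-\rho_1}\Big)\le\Big(\frac{2r}{\rho_2-\rho_1}\Big)^{q}H\Big(x,\frac{t}{2r}\Big).
\]
So $C_1=c(2r)^q\int_{B_{2r}}H(x,|u-\bar u|/(2r))\,dx$, $\gamma_1=q$ and $C_2=0$. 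Lemma \ref{lem:it} with $\rho_1=r$ and $\rho_2=2r$ then gives
\[
g(r)\le c\int_{B_{2r}}H\Big(x,\frac{|u-\bar u|}{2r}\Big)dx\le c\int_{B_{2r}}H\Big(x,\frac{|u-\bar u|}{r}\Big)dx .
\]
The constant of Lemma \ref{lem:it} depends only on $\theta$ and $\gamma_1$, hence on $n,N,p,q,L,\nu$.
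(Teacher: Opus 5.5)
Your proof is correct and follows essentially the same route as the paper: you test minimality with $u-\eta(u-(u)_{x_0,2r})$, bound the two sides using \eqref{C2} and the first inequality in \eqref{ass:gc}, fill the hole, and conclude with Lemma~\ref{lem:it} after rescaling via $H(x,t/(\rho_2-\rho_1))\le (2r/(\rho_2-\rho_1))^q H(x,t/(2r))$. The only cosmetic difference is that you note directly that $Dv=0$ on $B_{\rho_1}$, whereas the paper first bounds the integral over all of $B_{r_2}$ and then splits it, which yields the same annulus estimate.
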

\begin{proof}
Throughout the proof, we omit the center $x_0$ from the notation. For $0<r\leq r_1<r_2\leq2r$ and $\eta\in C_c^\infty(B_{r_2})$ with $0\leq\eta\leq1$, $\eta=1$ in $B_{r_1}$ and $|D\eta|\leq\frac{c}{r_2-r_1}$, define $\zeta:=\eta(u-(u)_{x_0,2r})$. Then using \eqref{C2} and the first inequality in \eqref{ass:gc},
\begin{equation*}
\begin{aligned}
    \int_{B_{r_1}} H(x,|Du|)\, dx &\leq c\int_{B_{r_1}}f(x,Du)-f(x,\mathbf{0})\, dx\\
    &\leq c\int_{B_{r_2}}f(x,Du)-f(x,\mathbf{0})\, dx\\
    &\leq c\int_{B_{r_2}}f(x,Du-D\zeta)-f(x,\mathbf{0})\, dx\\
    &\leq c\int_{B_{r_2}}H(x,|Du-D\zeta|)\, dx\\
    &\leq c^*\int_{B_{r_2}\setminus B_{r_1}}H(x,|Du|)\, dx+c\int_{B_{r_2}}H\left(x,\frac{|u-(u)_{2r}|}{r_2-r_1}\right)\, dx.
\end{aligned}
\end{equation*}
Filling the hole by adding $c^*\int_{ B_{r_1}}H(x,|Du|)\, dx$ and then writing $r_1=sr$ and $r_2=tr$ for some $1\leq s<t\leq2$, we obtain
\begin{equation*}
    \begin{aligned}
        \int_{B_{sr}} H(x,|Du|)\, dx&\leq \frac{c^*}{1+c^*}\int_{B_{tr}}H(x,|Du|)\, dx+\frac{c}{(t-s)^q}\int_{B_{tr}}H\left(x,\frac{|u-(u)_{2r}|}{r}\right)\, dx.
    \end{aligned}
\end{equation*}
Finally, using the iteration of Lemma \ref{lem:it}, we have the assertion.
\end{proof}

 Applying Lemmas \ref{lem:sobolevpoincare} and  \ref{lem:gehring} to Lemma \ref{lem:Caccioppoli1}, we  obtain the higher integrability result as follows.
\begin{lem}\label{lem:Highertintegrability0} 
    Let $H$ satisfy \eqref{a}, and  $u\in W^{1,p}(\Omega,\mathbb{R}^N)$ with $H(\cdot,|Du|)\in L^1(\Omega)$ be a minimizer of (\ref{F}). Then there exist constants $\sigma_0>0$ and $c>0$ depending on $n,N,p,q,\alpha,[a]_{C^{0,\alpha}},L$ and $\nu$ such that for any $B_{2r}(x_0)\Subset\Omega$ with $\int_{B_{2r}(x_0)}H(x,|Du|)\, dx\leq1$, we have $H(\cdot,|Du|)\in L_{\mathrm{loc}}^{1+\sigma_0}(\Omega)$ with
    \begin{equation*}
        \left(\fint_{B_r(x_0)}H(x,|Du|)^{1+\sigma_0}\, dx\right)^{\frac{1}{1+\sigma_0}}\leq c\fint_{B_{2r}(x_0)}H(x,|Du|)\, dx.
    \end{equation*}
    Moreover, in view of \cite[Remark 6.12]{Giusti}, for each $t\in(0,1)$, there exists a constant $c_t>0$ depending also on $t$ such that
     \begin{equation*}
        \left(\fint_{B_r(x_0)}H(x,|Du|)^{1+\sigma_0}\, dx\right)^{\frac{1}{1+\sigma_0}}\leq c_t\left(\fint_{B_{2r}(x_0)}H(x,|Du|)^t\, dx\right)^\frac{1}{t}.
    \end{equation*}
\end{lem}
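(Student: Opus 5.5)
We combine the Caccioppoli inequality of Lemma \ref{lem:Caccioppoli1} with the Sobolev--Poincar\'e inequality of Lemma \ref{lem:sobolevpoincare}, so that the reverse H\"older hypothesis of Lemma \ref{lem:gehring} holds with $g=H(\cdot,|Du|)$ and $h\equiv0$. Gehring's lemma then gives the claim.

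Fix $B_{2r}(x_0)\Subset\Omega$ with $\int_{B_{2r}(x_0)}H(x,|Du|)\,dx\le1$; we may also take $r\le1$ after shrinking. Let $B=B_\rho(y)$ be any ball with $2B\subset B_{2r}(x_0)$. Lemma \ref{lem:Caccioppoli1} applied on $B_\rho(y)\subset B_{2\rho}(y)$ gives
\[
\fint_{B}H(x,|Du|)\,dx\le c\fint_{2B}H\Bigl(x,\frac{|u-(u)_{y,2\rho}|}{\rho}\Bigr)dx .
\]
Since $a(x)\le a^+_{y,2\rho}$ on $2B$, we have $H(x,t)\le H(x^+_{y,2\rho},t)$ there. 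Applying Lemma \ref{lem:sobolevpoincare} with $w=u$ on $2B$ (the radius $2\rho$ only changes constants) gives
\[
\fint_{B}H(x,|Du|)\,dx\le c\bigl(1+[a]_{C^{0,\alpha}}\|Du\|_{L^p(2B)}^{q-p}\bigr)\Bigl(\fint_{2B}H(x,|Du|)^{\theta_0}dx\Bigr)^{1/\theta_0}.
\]
Here we used $H(x^-_{y,2\rho},t)\le H(x,t)$ on $2B$.

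The main obstacle is the factor $\|Du\|_{L^p(2B)}^{q-p}$, which must be bounded by a structural constant. The normalization gives $\|Du\|_{L^p(2B)}^p\le\int_{B_{2r}(x_0)}H(x,|Du|)\,dx\le1$, hence $\|Du\|_{L^p(2B)}^{q-p}\le1$. This is the reason for imposing $\int_{B_{2r}}H\le1$. As stated, Lemma \ref{lem:sobolevpoincare} already absorbs the relation $q/p\le1+\alpha/n$ into the factor $(1+[a]\|Dw\|_{L^p}^{q-p})$, so we only need this uniform bound. If a sharper form were required, we would note that $\|Du\|^{q-p}_{L^p(2B)}$ is in fact of size $\rho^{n(q-p)/p}\ge \rho^{\alpha}$ times an average, which compensates the oscillation $a^+-a^-\le[a]\rho^\alpha$. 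Either way, the hypothesis of Lemma \ref{lem:gehring} holds with $\gamma=\theta_0$, $h=0$, and $c_1$ depending only on $n,N,p,q,\alpha,[a]_{C^{0,\alpha}},L,\nu$.

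Lemma \ref{lem:gehring} on $B_0=B_{2r}(x_0)$ now yields $\sigma_0>0$ such that $H(\cdot,|Du|)\in L^{1+\sigma_0}$ on smaller balls. A standard covering argument then gives the estimate on $B_r(x_0)$ in terms of the average over $B_{2r}(x_0)$. Covering $\Omega$ locally by small balls on which $\int H\le1$, which is possible by absolute continuity of the integral, gives $H(\cdot,|Du|)\in L^{1+\sigma_0}_{\mathrm{loc}}(\Omega)$.

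For the second estimate we follow \cite[Remark 6.12]{Giusti}. Write $\Phi(\varrho)=\bigl(\fint_{B_\varrho}H^{1+\sigma_0}\bigr)^{1/(1+\sigma_0)}$ for intermediate radii $r\le\varrho_1<\varrho_2\le 2r$. Interpolate the $L^1$ norm between $L^t$ and $L^{1+\sigma_0}$, and use Young's inequality to obtain
\[
\Phi(\varrho_1)\le\tfrac12\Phi(\varrho_2)+c(\varrho_2-\varrho_1)^{-\gamma_1}\Bigl(\fint_{B_{2r}}H^t\Bigr)^{1/t}.
\]
Lemma \ref{lem:it} then removes the first term on the right and gives the second estimate.
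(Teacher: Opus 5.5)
Your proposal is correct and follows the paper's own route. The paper combines the Caccioppoli inequality of Lemma~\ref{lem:Caccioppoli1} with the Sobolev--Poincar\'e inequality of Lemma~\ref{lem:sobolevpoincare}, uses the normalization $\int_{B_{2r}(x_0)}H(x,|Du|)\,dx\le1$ to control $\|Du\|_{L^p(2B)}^{q-p}$, feeds the resulting reverse H\"older inequality into Lemma~\ref{lem:gehring}, and gets the $L^t$ version from \cite[Remark 6.12]{Giusti}, exactly as you do.

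A few small points. No covering is needed for the first estimate, since $B=B_r(x_0)$ already satisfies $2B=B_{2r}(x_0)=B_0$. To apply Lemma~\ref{lem:sobolevpoincare} literally on $2B$ you need $2\rho\le1$, i.e.\ $r\le\frac12$ rather than $r\le1$. In the $L^t$ step, it is the covering by balls of radius comparable to $\varrho_2-\varrho_1$ that produces the factor $(\varrho_2-\varrho_1)^{-\gamma_1}$, and you should say so there.
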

Here, we remark that the following smallness condition arises from the previous lemma.
\begin{rem}
    Since $H(\cdot,|Du|)\in L_{\mathrm{loc}}^{1+\sigma_0}(\Omega)$, given $\Omega'\Subset\Omega$, there exists $r_0\in(0,1]$ such that for any $B_{2r}(x_0)\subset\Omega'$ with $r\in(0,r_0]$,
    \begin{equation}\label{eq:smallness}
        |B_{2r}(x_0)|\leq1\qquad\text{and}\qquad\int_{B_{2r}(x_0)}H(x,|Du|)^{1+\sigma_0}\, dx\leq1.
    \end{equation}
    Note that (\ref{eq:smallness}) implies 
    \begin{equation*}
    \int_{B_{2r}}H(x,|Du|)\, dx\leq\frac{1}{1+\sigma_0}\int_{B_{2r}}H(x,|Du|)^{1+\sigma_0}\, dx+\frac{\sigma_0}{1+\sigma_0}|B_{2r}|\leq1.
\end{equation*}
\end{rem}

Combining the above higher integrability result with the argument in \cite[Lemma 3.3]{OSS25}, we further obtain the following estimates.

\begin{lem}
   Let  $H$ satisfy \eqref{a}, and   $u\in W^{1,p}(\Omega,\mathbb{R}^N)$ with $H(\cdot,|Du|)\in L^1(\Omega)$ be a minimizer of (\ref{F}). For any $B_{2r}(x_0)\Subset\Omega$ with $\int_{B_{2r}}H(x,|Du|)\, dx\leq1$ and $r\leq\frac{1}{2}$, there exists $c=c(n,N,p,q,\alpha,[a]_{C^{0,\alpha}},L,\nu)>0$ such that we have
    \begin{equation}\label{eq:Hoderintegrability}
        \left(\fint_{B_r(x_0)}H(x,|Du|)^{1+\sigma_0}\,dx\right)^{\frac{1}{1+\sigma_0}}\leq cH_{B_{2r}(x_0)}^-\left(\fint_{B_{2r}(x_0)}|Du|\,dx\right),
    \end{equation}
    and
    \begin{equation}\label{eq:H'oderintegrability}
        \fint_{B_r(x_0)}(H_{B_{2r}(x_0)}^-)'(|Du|)\,dx\leq c(H_{B_{2r}(x_0)}^-)'\left(\fint_{B_{2r}(x_0)}|Du|\,dx\right).
    \end{equation}
\end{lem}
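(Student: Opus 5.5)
The plan is to follow the argument of \cite[Lemma 3.3]{OSS25}: use the higher integrability of Lemma~\ref{lem:Highertintegrability0} together with the H\"older continuity of $a$ and the restriction $q/p\le 1+\alpha/n$. The point is that on small balls the function $H(x,\cdot)$ can be replaced by the frozen function $H^-_{B_{2r}}$, up to a controlled error. Write $B=B_{2r}(x_0)$, $a^\pm=a^\pm_{x_0,2r}$ and $H^-=H^-_B$.

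\textbf{Step 1: $a^+ - a^-$ is small relative to $|Du|^{p-q}$.} By (\ref{a}) we have $a^+-a^-\le [a]_{C^{0,\alpha}}(4r)^\alpha$. From $\int_B H(x,|Du|)\,dx\le 1$ we get $\fint_B |Du|^p\,dx\lesssim r^{-n}$, and therefore
\[
(a^+-a^-)\Big(\fint_B|Du|^p\,dx\Big)^{\frac{q-p}{p}}\lesssim r^{\alpha-\frac{n(q-p)}{p}}\le c.
\]

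\textbf{Step 2: reduce to $H^-$ evaluated at $\fint_B|Du|$.} Start from the first inequality of Lemma~\ref{lem:Highertintegrability0} in its second form with $t\in(0,1)$ chosen suitably; a ball-covering argument allows $B_{2r}$ on the right-hand side. This gives
\[
\Big(\fint_{B_r}H(x,|Du|)^{1+\sigma_0}\Big)^{\frac1{1+\sigma_0}}\le c_t\Big(\fint_B H(x,|Du|)^t\Big)^{1/t}.
\]
Since $t<1$, the map $s\mapsto H^-(s)^t$ is not convex, so Jensen cannot be applied directly. Instead we choose $t=1/p$. Pointwise,
\[
H(x,|Du|)\le H^-(|Du|)+(a^+-a^-)|Du|^q .
\]
For the main part, $(H^-)^{1/p}(s)=(s^p+a^-s^q)^{1/p}$ is comparable to a convex function, because $s\mapsto s^q$ raised to $1/p$ is convex when $q\ge p$. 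Jensen then bounds
\[
\Big(\fint_B H^-(|Du|)^{1/p}\Big)^{p}\lesssim H^-\Big(\fint_B|Du|\Big).
\]

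\textbf{Step 3 (main obstacle): the term $(a^+-a^-)|Du|^q$.} Here we must bound $(a^+-a^-)\big(\fint_B|Du|^{q/p}\big)^{p}$ by $c\big(\fint_B|Du|\big)^p$. My first attempt is to use Step 1 with $\fint_B|Du|^p$ replaced by a quantity controlled by $(\fint_B|Du|)^p$, via a reverse H\"older (Gehring) estimate for $|Du|$ at a low exponent. This is where the precise exponent bookkeeping of \cite{OSS25} is required, and I expect it to be the delicate point. The alternative is to run a case distinction, as in \cite{OSS25}:
\begin{itemize}
\item If $a^-\ge K[a]_{C^{0,\alpha}}r^\alpha$, then $a(x)\sim a^-$ on $B$, so $H(x,\cdot)\sim H^-$ there, and the frozen estimate of Step 2 applies.
\item Otherwise $a^+\lesssim r^\alpha$. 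Then $a(x)|Du|^q\lesssim r^\alpha|Du|^{q-p}|Du|^p$, and the higher integrability with $q/p\le 1+\alpha/n$ absorbs this into $p$-growth, which reduces everything to $|Du|^p$ and Jensen.
\end{itemize}

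\textbf{Step 4: derive (\ref{eq:H'oderintegrability}).} Use $(H^-)'(s)\sim H^-(s)/s$ and the fact that $(H^-)'$ is comparable to a function of the form $\Phi\circ (H^-)$ with $\Phi$ concave, by the almost-concavity lemma \cite[Lemma 2.2]{O18} applied to $\Psi=(H^-)'\circ (H^-)^{-1}$. Note that $\Psi(\tau)/\tau=1/(H^-)^{-1}(\tau)$ is nonincreasing. Jensen then gives
\[
\fint_{B_r}(H^-)'(|Du|)\lesssim \Psi\Big(\fint_{B_r}H^-(|Du|)\Big),
\]
and (\ref{eq:Hoderintegrability}) together with the monotonicity of $\Psi$ yields the claim.
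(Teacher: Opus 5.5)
There is a genuine gap in Step~2, and it carries over into Step~3.

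\textbf{Where Step~2 fails.} You apply Jensen's inequality in the wrong direction. For convex $\phi$, Jensen gives $\phi(\fint g)\le\fint\phi(g)$, which is a \emph{lower} bound for the average. You need the upper bound $\big(\fint_B H^-(|Du|)^{t}\big)^{1/t}\lesssim H^-(\fint_B|Du|)$, and that requires $s\mapsto H^-(s)^t$ to be (almost) \emph{concave}. With $t=1/p$ you have $H^-(s)^{1/p}\sim s+(a^-)^{1/p}s^{q/p}$, which is strictly convex when $q>p$ and $a^->0$. The claimed inequality is then false for general integrands: take $g=M\chi_E$ with $|E|=|B|/M$. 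Then $\fint_B g=1$, while $\fint_B H^-(g)^{1/p}\ge (a^-)^{1/p}M^{q/p-1}\to\infty$.

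\textbf{Why Step~3 does not close.} The choice $t=1/p$ is also what creates your ``main obstacle''. The nonautonomous term becomes $(a^+-a^-)\big(\fint_B|Du|^{q/p}\big)^p$, which you cannot bound by $(\fint_B|Du|)^p$ without a reverse H\"older inequality you do not have. Neither branch of your case distinction repairs this. The first branch invokes the flawed frozen estimate. In the second, $r^\alpha|Du|^{q-p}$ is not pointwise bounded, so nothing gets ``absorbed into $p$-growth''.

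\textbf{The fix.} Take $t=1/q$ in the second form of Lemma~\ref{lem:Highertintegrability0}. The paper gives no details beyond citing \cite[Lemma 3.3]{OSS25}, but it uses exactly $t=1/q$ for the shifted analogue, Lemma~\ref{lem:Higherintegrability3}. With this choice:
\begin{itemize}
\item[(i)] $s\mapsto H^-(s)^{1/q}$ is nondecreasing, and $H^-(s)^{1/q}/s=(s^{p-q}+a^-)^{1/q}$ is nonincreasing. By \cite[Lemma 2.2]{O18} it is comparable to a concave function, so Jensen gives $\big(\fint_B H^-(|Du|)^{1/q}\,dx\big)^q\le c\,H^-(\fint_B|Du|\,dx)$.
\item[(ii)] By subadditivity of $\tau\mapsto\tau^{1/q}$, $H(x,|Du|)^{1/q}\le H^-(|Du|)^{1/q}+(a^+-a^-)^{1/q}|Du|$. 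The error is now \emph{linear} in $|Du|$, and it contributes
\[
(a^+-a^-)\Big(\fint_B|Du|\,dx\Big)^q\le (a^+-a^-)\Big(\fint_B|Du|^p\,dx\Big)^{\frac{q-p}{p}}\Big(\fint_B|Du|\,dx\Big)^p\le c\,r^{\alpha-\frac{n(q-p)}{p}}\Big(\fint_B|Du|\,dx\Big)^p\le c\,H^-\Big(\fint_B|Du|\,dx\Big).
\]
\end{itemize}
The middle step there is exactly your (correct) Step~1. No case distinction and no low-exponent Gehring argument are needed.

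\textbf{Step~4.} This is fine, with one small correction. You only have $\Psi(\tau)/\tau\sim 1/(H^-)^{-1}(\tau)$, not equality, while \cite[Lemma 2.2]{O18} needs exact monotonicity. Apply it instead to $\tau\mapsto\tau/(H^-)^{-1}(\tau)$. This function is comparable to $\Psi$, and it is exactly nondecreasing with $\tau\mapsto 1/(H^-)^{-1}(\tau)$ exactly nonincreasing.
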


We also recall two comparison estimates from \cite[Lemma 3.6]{OSS25} that compare the functions $H(x,t)$ and $H_{B_{2r}}^-(t)$, and $H'(x,t)$ and $(H_{B_{2r}}^-)'(t)$. Note here that $H(x,t)-H_{B_{2r}}^-(t)=(a(x)-a_{B_{2r}}^-)t^q$ and $H'(x,t)-(H_{B_{2r}}^-)'(t)=q(a(x)-a_{B_{2r}}^-)t^{q-1}$.

\begin{lem}\label{lem:qintegrability} 
  Let $H$ satisfy \eqref{a}, and   $u\in W^{1,p}(\Omega,\mathbb{R}^N)$ with $H(\cdot,|Du|)\in L^1(\Omega)$ be a minimizer of (\ref{F}). There exist $\alpha_0=\alpha_0(n,N,p,q,\alpha,[a]_{C^{0,\alpha}},L,\nu)>0$ and \\
  $c=c(n,N,p,q,\alpha,[a]_{C^{0,\alpha}},L,\nu)>0$ such that for every $B_{2r}(x_0)\Subset\Omega$ with $r\leq\frac{1}{2}$ satisfying (\ref{eq:smallness}),
\begin{equation}\label{eq:qintegrabilityq}
    \fint_{B_r(x_0)}(a(x)-a_{x_0,2r}^-)|Du|^q\, dx \leq cr^{\alpha_0}H^-_{B_{2r}(x_0)}\left(\fint_{B_{2r}(x_0)}|Du|\, dx\right)
\end{equation}
and
\begin{equation}\label{eq:qintegrabilityq-1}
    \fint_{B_r(x_0)}(a(x)-a_{x_0,2r}^-)|Du|^{q-1}\, dx\leq cr^{\alpha_0}(H^-_{B_{2r}(x_0)})'\left(\fint_{B_{2r}(x_0)}|Du|\, dx\right).
\end{equation}

\end{lem}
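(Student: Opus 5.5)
The plan is to use the Hölder continuity of $a$ to convert the weight $a(x)-a_{x_0,2r}^-$ into a power of $r$, and then to absorb the extra integrability that this costs using the higher integrability estimate \eqref{eq:Hoderintegrability} together with the gap condition $q/p\le 1+\alpha/n$. Since $a\in C^{0,\alpha}$, for $x\in B_{2r}(x_0)$ we have
\[
0\le a(x)-a^-_{x_0,2r}\le [a]_{C^{0,\alpha}}(4r)^\alpha .
\]
Hence the left-hand side of \eqref{eq:qintegrabilityq} is bounded by $c\,r^\alpha\fint_{B_r}|Du|^q\,dx$.

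Next I write $|Du|^q=|Du|^p|Du|^{q-p}$ and apply Hölder's inequality with exponents $1+\sigma_0$ and $(1+\sigma_0)/\sigma_0$. The first factor is controlled by $\left(\fint_{B_r} H(x,|Du|)^{1+\sigma_0}\right)^{1/(1+\sigma_0)}$, and \eqref{eq:Hoderintegrability} bounds this by $c\,H^-_{B_{2r}}(\fint_{B_{2r}}|Du|)$. For the factor $|Du|^{q-p}$, I use the smallness condition \eqref{eq:smallness}, which gives $\int_{B_{2r}}|Du|^{p(1+\sigma_0)}\le1$. This yields
\[
\left(\fint_{B_r}|Du|^{p(1+\sigma_0)}\right)^{\frac{q-p}{p(1+\sigma_0)}}\le c\, r^{-\frac{n(q-p)}{p(1+\sigma_0)}} .
\]
To make the exponents match, I first take $\sigma_0$ small enough that $(q-p)(1+\sigma_0)/\sigma_0\ge p(1+\sigma_0)$ fails. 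The cleaner route is to split differently: I bound $r^\alpha\fint_{B_r}|Du|^q$ by $r^\alpha\left(\fint_{B_r}|Du|^{p(1+\sigma_0)}\right)^{\frac{q-p}{p(1+\sigma_0)}}\cdot\left(\fint_{B_r}|Du|^{p(1+\sigma_0)}\right)^{\frac{1}{1+\sigma_0}}$. This requires $q\le p(1+\sigma_0)$. If that fails, I would instead use an interpolation with weight, namely $a(x)|Du|^q\le H$. The total power of $r$ is then
\[
\alpha-\frac{n(q-p)}{p(1+\sigma_0)}=:\alpha_0>0 .
\]
This quantity is positive precisely because $q-p\le \alpha p/n$, with strict inequality gained from $\sigma_0>0$; it is the same exponent $\alpha(\sigma_0)$ as in Lemma~\ref{lem:shiftsobolevpoincare}. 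The remaining factor $\left(\fint|Du|^{p(1+\sigma_0)}\right)^{1/(1+\sigma_0)}$ is bounded by $\left(\fint H^{1+\sigma_0}\right)^{1/(1+\sigma_0)}$, and then by \eqref{eq:Hoderintegrability} again.

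The main obstacle is the exponent bookkeeping when $q>p(1+\sigma_0)$. In that case I would handle it separately: on the set where $a(x)\ge 2[a]_{C^{0,\alpha}}(4r)^\alpha$, the frozen coefficient satisfies $a_{x_0,2r}^-\ge a(x)/2$, so $(a-a^-)|Du|^q\le r^\alpha$-small multiples are not needed. There one simply gets $a(x)-a^-\le a^-$, i.e. $(a-a^-)|Du|^q\le a^-|Du|^q\cdot c r^{\alpha}/a^-$. Alternatively, and more simply, I would follow \cite[Lemma 3.6]{OSS25} verbatim, since the argument only uses \eqref{eq:Hoderintegrability} and \eqref{eq:smallness}.

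For \eqref{eq:qintegrabilityq-1} the same steps apply with $q-1$ and $p-1$ in place of $q$ and $p$. Here I use \eqref{eq:H'oderintegrability} and $t\,(H^-)'(t)\sim H^-(t)$. Concretely, after Hölder the bound becomes $r^{\alpha_0}\left(\fint|Du|^{p(1+\sigma_0)}\right)^{\frac{p-1}{p(1+\sigma_0)}}$. Comparing this with $(H^-)'(\fint|Du|)\ge (\fint|Du|)^{p-1}$ through the reverse-Hölder estimate \eqref{eq:Hoderintegrability} (with $\psi=t^p$ part) gives the claim.
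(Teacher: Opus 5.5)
The paper does not prove this lemma itself. Both estimates are quoted from \cite[Lemma 3.6]{OSS25}, so your closing fallback is exactly what the paper does. The argument you actually write, however, has a genuine gap.

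Throughout, write $t:=\fint_{B_{2r}}|Du|\,dx$, $H^-:=H^-_{B_{2r}(x_0)}$ and $a^-:=a^-_{x_0,2r}$.

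\textbf{Where your argument breaks.} Bounding the weight by $cr^\alpha$ and then applying H\"older against $|Du|\in L^{p(1+\sigma_0)}$ is fine. It gives \eqref{eq:qintegrabilityq} with $\alpha_0=\alpha-\frac{n(q-p)}{p(1+\sigma_0)}$, but only when $q\le p(1+\sigma_0)$. Nothing ensures this. The exponent $\sigma_0$ is a small Gehring exponent, while \eqref{a} allows $q$ up to $p(1+\alpha/n)$. The paper also explicitly notes that $Du\in L^q_{\mathrm{loc}}$ is not known in this quasiconvex setting, so $r^\alpha\fint_{B_r}|Du|^q\,dx$ may be infinite.

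Your proposed repair produces no decay. On the set where $a\ge 2[a]_{C^{0,\alpha}}(4r)^\alpha$ you get $(a-a^-)|Du|^q\le (cr^\alpha/a^-)\,a^-|Du|^q$, but there $r^\alpha/a^-$ is merely bounded, not small. On the complementary set you are back to needing $|Du|\in L^q$.

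The same restriction, now $q-1\le p(1+\sigma_0)$, affects your route to \eqref{eq:qintegrabilityq-1}, and there the final comparison is also wrong. Using $(H^-)'(t)\ge t^{p-1}$ would require $\bigl(\fint_{B_r}|Du|^{p(1+\sigma_0)}dx\bigr)^{1/(p(1+\sigma_0))}\lesssim t$, whereas \eqref{eq:Hoderintegrability} only gives $\lesssim H^-(t)^{1/p}$. The correct step is $H^-(t)^{(p-1)/p}\le H^-(t)/t\lesssim (H^-)'(t)$, which uses $H^-(t)\ge t^p$.

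\textbf{The missing idea.} You mention interpolation with the weight but never carry it out. The weight is bounded both by $cr^\alpha$ and by $a(x)$, and $a|Du|^q\le H(x,|Du|)$ itself lies in $L^{1+\sigma_0}$. Set $\theta:=\min\{1,p\sigma_0/(q-p)\}$, with $\theta=1$ if $q=p$. Then
\[
(a-a^-)|Du|^q\le (cr^\alpha)^{\theta}\,\bigl(a|Du|^q\bigr)^{1-\theta}\bigl(|Du|^p\bigr)^{\theta q/p},
\]
and the total power $E:=1+\theta\frac{q-p}{p}$ is at most $1+\sigma_0$.

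H\"older, Jensen and \eqref{eq:Hoderintegrability} bound the average over $B_r$ by $cr^{\alpha\theta}H^-(t)^{E}$. Jensen and H\"older together with \eqref{eq:smallness} give
\[
H^-(t)\le\fint_{B_{2r}}H(x,|Du|)\,dx\le|B_{2r}|^{-1/(1+\sigma_0)}.
\]
So the surplus factor $H^-(t)^{\theta(q-p)/p}$ costs only $cr^{-\theta n(q-p)/(p(1+\sigma_0))}$. This proves \eqref{eq:qintegrabilityq} in all cases with $\alpha_0=\theta\bigl(\alpha-\frac{n(q-p)}{p(1+\sigma_0)}\bigr)$. When $q\le p(1+\sigma_0)$ one has $\theta=1$, which recovers your exponent.

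Then \eqref{eq:qintegrabilityq-1} follows from the first estimate. H\"older gives
\[
\fint_{B_r}(a-a^-)|Du|^{q-1}dx\le\bigl(\fint_{B_r}(a-a^-)|Du|^q dx\bigr)^{(q-1)/q}(cr^\alpha)^{1/q}.
\]
Next, $H^-(t)^{(q-1)/q}\le c\,(H^-)'(t)\,t^{1-p/q}$, and $t^{1-p/q}\le H^-(t)^{\frac{q-p}{pq}}\le cr^{-\frac{n(q-p)}{pq(1+\sigma_0)}}$. This leaves the positive exponent $\frac{q-1}{q}\alpha_0+\frac1q\bigl(\alpha-\frac{n(q-p)}{p(1+\sigma_0)}\bigr)$.
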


\subsection{Self-improving estimates in the shifted setting}
We begin by deriving a Caccioppoli-type estimate in the shifted setting. Note that in this subsection, we assume that the function $H$ in \eqref{H} satisfies \eqref{a} and the function $f(x,P)$ in \eqref{F} satisfies \eqref{C1}--\eqref{C5}.

\begin{lem}\label{lem:Caccioppoli2}
Let   $u\in W^{1,p}(\Omega,\mathbb{R}^N)$ with $H(\cdot,|Du|)\in L^1(\Omega)$ be a minimizer of (\ref{F}).   For every $B_{2r}(x_0)\Subset\Omega$ with $r\leq\frac{1}{2}$ satisfying (\ref{eq:smallness}) and for every affine function $\ell :\mathbb{R}^n\rightarrow\mathbb{R}^N$ defined by 
\begin{equation*}
    \ell(x):=Q(x-x_0)+(u)_{x_0,2r},\qquad x\in\mathbb{R}^n,
\end{equation*}
with $Q\in\mathbb{R}^{N\times n}$ and $b\in \mathbb R^N$, there exists $c=c(n,N,p,q,\alpha,[a]_{C^{0,\alpha}},L,\nu)>0$ such that
\begin{equation*}
\begin{aligned}
    &\fint_{B_r(x_0)}H_{|Q|}(x,|Du-Q|)\, dx\\
    &\leq c\fint_{B_{2r}(x_0)} H_{|Q|}\left(x,\frac{|u-\ell |}{r}\right)\, dx+c(r^{\beta_1}+r^{\alpha}|Q|^{q-p})H^-_{B_{2r}(x_0)}(|Q|),
\end{aligned}
\end{equation*}
where 
\begin{equation}\label{alpha1}
\alpha_1:=\min\{\alpha,\alpha_0,\beta_0\}.
\end{equation}
Here, $\alpha$, $\alpha_0$ and $\beta_0$ are given in \eqref{a}, Lemma~\ref{lem:qintegrability}, and \eqref{C5}, respectively.
\end{lem}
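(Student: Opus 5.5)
We will run the classical hole-filling argument for quasiconvex functionals, but with the integrand frozen at the point $y:=x^-_{x_0,2r}$, where $a$ attains its infimum on $\overline{B_{2r}(x_0)}$. This choice makes $H_{|Q|}(y,t)\le H_{|Q|}(x,t)$ for all $x\in B_{2r}$, so the frozen quantities on the right-hand side are dominated by the unfrozen ones. (The exponent written $\beta_1$ in the statement is presumably the $\alpha_1$ of \eqref{alpha1}, and we aim for $r^{\alpha_1}$.)

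\emph{Step 1: frozen Caccioppoli estimate.} For $r\le r_1<r_2\le 2r$, take a cut-off $\eta$ between $B_{r_1}$ and $B_{r_2}$ with $|D\eta|\le c/(r_2-r_1)$. Set
\[
\zeta:=\eta(u-\ell),\qquad \psi:=(1-\eta)(u-\ell),
\]
so that $Du=Q+D\zeta+D\psi$. Applying \eqref{C4} at $y$ with $P=Q$ (after a density argument), we get
\[
\nu\int_{B_{r_2}}H''(y,|Q|+|D\zeta|)|D\zeta|^2\,dx\le \int_{B_{r_2}}\big[f(y,Q+D\zeta)-f(y,Q)\big]\,dx .
\]
We split the right-hand side as
\[
\int\big[f(y,Du)-f(y,Du-D\zeta)\big]
+\int\big[f(y,Q+D\zeta)-f(y,Q+D\zeta+D\psi)+f(y,Q+D\psi)-f(y,Q)\big].
\]
The second bracket is supported in $B_{r_2}\setminus B_{r_1}$. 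After subtracting $\langle \partial f(y,Q)\,|\,D\psi\rangle$ from one difference and adding it to the other, both become second-order remainders. By \eqref{eq:differencetoshift} and \eqref{eq:shifteqv} they are bounded by
\[
c\int_{B_{r_2}\setminus B_{r_1}}H_{|Q|}(y,|Du-Q|)\,dx+c\int_{B_{r_2}}H_{|Q|}\Big(y,\frac{|u-\ell|}{r_2-r_1}\Big)\,dx .
\]
For the first bracket we replace $y$ by $x$ using \eqref{C5}. Since $u-\zeta$ is an admissible competitor, minimality gives $\int f(x,Du)-f(x,Du-D\zeta)\le 0$. What remains are the error terms
\[
E:=c\int_{B_{r_2}}\Big[r^{\beta_0}\big(H(x,|Du|)+H(x,|Du-D\zeta|)\big)+(a(x)-a^-_{x_0,2r})\big(|Du|^q+|Du-D\zeta|^q\big)\Big]dx .
\]
On the left-hand side, $H''(y,|Q|+|D\zeta|)|D\zeta|^2\sim H_{|Q|}(y,|D\zeta|)$, which equals $H_{|Q|}(y,|Du-Q|)$ on $B_{r_1}$.

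\emph{Step 2: controlling $E$ (the main obstacle).} Every term in $E$ involves $Du$ unshifted, whereas the statement allows only shifted quantities plus $(r^{\alpha_1}+r^\alpha|Q|^{q-p})H^-_{B_{2r}}(|Q|)$. We first use
\[
H(x,|Du|)\lesssim H_{|Q|}(x,|Du-Q|)+H(x,|Q|)
\quad\text{and}\quad
H(x,|Q|)\le H^-_{B_{2r}}(|Q|)+cr^\alpha|Q|^q .
\]
The latter term equals $cr^\alpha|Q|^{q-p}|Q|^p\le cr^\alpha|Q|^{q-p}H^-_{B_{2r}}(|Q|)$. The $r^{\beta_0}$-part then splits into $r^{\beta_0}\times$(shifted terms), which Lemma~\ref{lem:it} absorbs for $r$ small (with larger $r\le \frac12$ handled by the constant), plus an admissible error.

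The delicate term is $(a(x)-a^-)|Du|^q$. For this we would use $a(x)-a^-\le cr^\alpha$ together with the higher integrability of Lemma~\ref{lem:Highertintegrability0}, the smallness \eqref{eq:smallness} and Lemma~\ref{lem:qintegrability}. Concretely, we would split into $\{|Du|\le 2|Q|\}$, giving $\le cr^\alpha|Q|^q$, and $\{|Du|>2|Q|\}$, where $|Du|\sim|Du-Q|$. On the second set we would apply Hölder with exponent $1+\sigma_0$ to extract a factor $r^{\alpha_0}$, in the spirit of the proof of Lemma~\ref{lem:qintegrability}, and reach
\[
cr^{\alpha_1}\int H_{|Q|}(x,|Du-Q|)\,dx .
\]
That term again falls under the iteration lemma. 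This is where we expect to have to work hardest, and it is why the exponent $\alpha_1=\min\{\alpha,\alpha_0,\beta_0\}$ appears.

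\emph{Step 3: conclusion.} Filling the hole and applying Lemma~\ref{lem:it} with $r_1=sr$, $r_2=tr$ yields the estimate with $y$ in place of $x$ on both sides, up to the stated error. On the right-hand side we use $H_{|Q|}(y,\cdot)\le H_{|Q|}(x,\cdot)$. On the left-hand side we return from $y$ to $x$: by \eqref{eq:shiftislinear},
\[
H_{|Q|}(x,t)-H_{|Q|}(y,t)=(a(x)-a^-)\,\widetilde\psi_{|Q|}(t)\lesssim (a(x)-a^-)\big(t^q+|Q|^q\big),
\]
and this difference is estimated exactly as in Step 2.
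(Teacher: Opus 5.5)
\paragraph{Verdict: genuine gap in Step 2.} Your Step 1 is the paper's argument: freezing at $x^-_{x_0,2r}$, the splitting $Du-Q=D\zeta+D\psi$, minimality to discard $\int f(x,Du)-f(x,Du-D\zeta)$, and the second-order remainders on the annulus. Step 2 does not close as planned, and it is the heart of the proof. The gain $r^{\alpha_0}$ for $(a(x)-a^-_{x_0,2r})|Du|^q$ comes from the reverse H\"older inequality (Lemma~\ref{lem:Highertintegrability0}). So, as encoded in Lemma~\ref{lem:qintegrability}, it is only available in the form $\fint_{B_\rho}(a-a^-)|Du|^q\le c\rho^{\alpha_0}H^-\big(\fint_{B_{2\rho}}|Du|\big)$, at the cost of enlarging the ball. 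Restricting to $\{|Du|>2|Q|\}$ does not change this, because the estimate is not local.

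What you actually obtain is therefore $c\,r^{\alpha_1}\int_{B_{\rho'}}H_{|Q|}(x,|Du-Q|)\,dx$ on a ball $B_{\rho'}$ strictly larger than $B_{r_2}$; the paper uses $B_{5r/3}$ with $r_2<\frac43 r$. This term is not of the form $\theta\, g(\rho_2)$ that Lemma~\ref{lem:it} requires. It can only be carried as the constant $C_2$, and then the final inequality still contains a gradient integral over a larger ball. The statement does not allow such a term, and you cannot remove it by applying the estimate on larger balls: that is circular, and there is no room beyond $B_{2r}$. The same problem affects your Step 3, where you return from $x^-$ to $x$ on the left-hand side. Your remark that ``larger $r$ [are] handled by the constant'' for the $r^{\beta_0}$-part also silently needs an a priori bound of the left-hand side by the right-hand side, which you have not supplied.

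\paragraph{The missing ingredient.} What you need is the unshifted Caccioppoli inequality, Lemma~\ref{lem:Caccioppoli1}. Do not shift the error terms. Bound both $r^{\beta_0}\int H(x,|Du|)$ and $\int (a-a^-)|Du|^q$ by $c\,r^{\alpha_1}\int_{B_{5r/3}}H(x,|Du|)\,dx$, and carry this quantity as a constant through the hole filling and Lemma~\ref{lem:it}. Only at the end, apply Lemma~\ref{lem:Caccioppoli1} between $B_{5r/3}$ and $B_{2r}$, together with $|u-(u)_{x_0,2r}|/r\le |u-\ell|/r+2|Q|$ on $B_{2r}$ and \eqref{eq:addtoshift}:
\[
\int_{B_{5r/3}}H(x,|Du|)\,dx\le c\int_{B_{2r}}H_{|Q|}\Big(x,\frac{|u-\ell|}{r}\Big)\,dx+c\,r^n\big(H^-_{B_{2r}}(|Q|)+r^{\alpha}|Q|^{q-p}H^-_{B_{2r}}(|Q|)\big).
\]
Since $r^{\alpha_1}\le 1$, this lands in the admissible right-hand side, so no smallness of $r$ is needed. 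The same bound also controls the difference $(a(x)-a^-)\big(|Du|^q+|Q|^q\big)$ that arises when you pass back from $x^-$ to $x$ on the left-hand side.
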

\begin{proof}
Throughout the proof, we omit the dependence on the center $x_0$ from the notation and write $x^\pm:=x_{x_0,2r}^\pm$ (see \eqref{eq:pm}). Let $r\leq r_1<r_2<\frac{4}{3}r$. For a cut-off function $\eta\in C_0^\infty(B_{r_2})$ satisfying $0\leq\eta\leq1$, $\eta=1$ in $B_{r_1}$ and $|D\eta|\leq\frac{c}{r_2-r_1}$, we define $\zeta:=\eta(u-\ell )$ and $\xi:=(1-\eta)(u-\ell )$ on $\Omega$. Note that $\zeta+\xi=u-\ell $ and $D\zeta+D\xi =Du-Q$.

Using the quasiconvexity \eqref{C4}, we have
\begin{equation*}
    \begin{aligned}
        \int_{B_{r_2}}H_{|Q|}(x^-,|D\zeta|)dx&\leq\int_{B_{r_2}}f(x^-,Q+D\zeta)-f(x^-,Q)\, dx\\
        &=\underbrace{\int_{B_{r_2}}f(x^-,Q+D\zeta)-f(x^-,Q+D\zeta+D\xi )\, dx}_{=:I_1}\\
        &\qquad+\underbrace{\int_{B_{r_2}}f(x^-,Du)-f(x,Du)\, dx}_{=:I_2}\\
        &\qquad+\underbrace{\int_{B_{r_2}}f(x,Du)-f(x,Du-D\zeta)\, dx}_{=:I_3}\\
        &\qquad+\underbrace{\int_{B_{r_2}}f(x,Q+D\xi )-f(x^-,Q+D\xi )\, dx}_{=:I_4}\\
        &\qquad+\underbrace{\int_{B_{r_2}}f(x^-,Q+D\xi )-f(x^-,Q)\, dx}_{=:I_5}.
    \end{aligned}
    \end{equation*}
    Note that, since $u$ minimizes $\mathcal{F}$, $I_3\leq0$.
    
     $I_1+I_5$ can be estimated by (\ref{eq:differencetoshift}) as follows,
        \begin{equation*}
            \begin{aligned}
                I_1+I_5&= \int_{B_{r_2}}\int_0^1\langle\,  \partial f(x^-,Q+tD\xi )-\partial f(x^-,Q+D\zeta+tD\xi )\, |\, D\xi \, \rangle \, dt\, dx\\
                &=\int_{B_{r_2}}\int_0^1\langle\, \partial f(x^-,Q+tD\xi )-\partial f(x^-,Q)\\
                &\qquad\qquad\qquad+ \partial f(x^-,Q)-\partial f(x^-,Q+D\zeta+tD\xi )\, |\, D\xi \, \rangle \, dt\, dx\\
                &\leq c\int_{B_{r_2}}H_{|Q|}(x^-,|D\xi |)\, dx+c\int_{B_{r_2}}H'_{|Q|}(x^-,|D\zeta|+|D\xi |)|D\xi |\, dx.
            \end{aligned}
        \end{equation*}
        For every $\delta\in(0,1)$, by (\ref{eq:Nyoung}) and (\ref{eq:conjugatecompositedifferentiation}), the shifted N-functions $(H_{B_r}^-)_s(t)=H_s(x^-,t)$ with $s\ge 0$ satisfy 
        $\tau H'_{s}(x^-,t)\leq c(\delta)H_{s}(x^-,\tau)+\delta H_{s}(x^-,t)$, so that we have
        \begin{equation*}
        \begin{aligned}
            H'_{|Q|}(x^-,|D\zeta|+|D\xi |)|D\xi |&\leq \delta H_{|Q|}(x^-,|D\zeta|+|D\xi |)+c(\delta)H_{|Q|}(x^-,|D\xi |)\\
            &\leq\delta H_{|Q|}(x^-,|D\zeta|)+c(\delta)H_{|Q|}(x^-,|D\xi |).
        \end{aligned}
        \end{equation*}
        Thus we have
        \begin{equation*}
            I_1+I_5\leq c\int_{B_{r_2}}H_{|Q|}(x^-,|D\xi |)\, dx+\frac{1}{2}\int_{B_{r_2}}H_{|Q|}(x^-,|D\zeta|)\, dx.
        \end{equation*}
      By \eqref{C5}, we estimate $I_2+I_4$ as
        \begin{equation}\label{eq:I24_1}
            \begin{aligned}
                I_2+I_4&\leq c\int_{B_{r_2}}r^{\beta_0}H(x,|Du|)+(a(x)-a(x^-))|Du|^q \, dx\\
                &\quad+c\int_{B_{r_2}}r^{\beta_0}H(x,|Q+D\xi |)+(a(x)-a(x^-))|Q+D\xi |^q \, dx.
            \end{aligned}
        \end{equation}
    Applying (\ref{eq:shiftislinear}) and (\ref{eq:addtoshift}) to the N-function $\widetilde\psi(t):=t^q$,   we find
    \begin{equation}\label{eq:I24_2}
    \begin{aligned}
        &\int_{B_{r_2}}(a(x)-a(x^-))|Q+D\xi |^q \, dx 
        \leq c\int_{B_{r_2}}r^\alpha|Q|^q+(a(x)-a(x^-))\widetilde\psi(|D\xi |)\, dx\\
        &\leq c\int_{B_{r_2}}r^\alpha|Q|^{q-p}H(x^-,|Q|) + (a(x)-a(x^-))\left(\widetilde\psi_{|Q|}(|D\xi |)+|Q|^q\right)\, dx\\
        &\leq c\int_{B_{r_2}}r^\alpha|Q|^{q-p}H(x^-,|Q|)+H_{|Q|}(x,|D\xi |)\, dx.
    \end{aligned}
    \end{equation}
    Furthermore, using (\ref{eq:qintegrabilityq}) and Jensen's inequality, we see that
    \begin{equation}\label{eq:Duqintegrable}
    \begin{aligned}
        \int_{B_{\frac{4}{3}r}}(a(x)-a(x^-))|Du|^q\, dx&\leq r^{\alpha_0+n}H\bigg(x^-,\fint_{B_{\frac{5}{3}r}}|Du|\, dx\bigg)\\
        &\leq r^{\alpha_0}\int_{B_{\frac{5}{3}r}}H(x,|Du|)\, dx.
    \end{aligned}
    \end{equation}  
    Therefore, collecting (\ref{eq:I24_1}), (\ref{eq:I24_2}) and (\ref{eq:Duqintegrable}), and using (\ref{eq:addtoshift}), we obtain
    \begin{equation*}
    \begin{aligned}
        I_2+I_4&\leq c\int_{B_{\frac{5}{3}r}}r^{\beta_0}H(x,|Du|)+r^{\alpha_0}H(x,|Du|)\, dx\\
        &\qquad+c\int_{B_{r_2}} r^{\beta_0}H(x,|Q+D\xi |)+H_{|Q|}(x,|D\xi |)\, dx+cr^{\alpha+n}|Q|^{q-p}H(x^-,|Q|)\\
        &\leq c\int_{B_{\frac{5}{3}r}}r^{\beta_0}H(x,|Du|)+r^{\alpha_0}H(x,|Du|)\, dx\\
        &\qquad+c\int_{B_{r_2}} r^{\beta_0}\big(H(x^-,|Q|)+r^\alpha|Q|^q\big)+H_{|Q|}(x,|D\xi |)\, dx\\
        &\qquad+cr^{\alpha+n}|Q|^{q-p}H(x^-,|Q|)\\
        &\leq c\int_{B_{\frac{5}{3}r}}r^{\beta_1}H(x,|Du|)\, dx+c\int_{B_{r_2}} H_{|Q|}(x,|D\xi |)\, dx\\
        &\qquad+cr^n(r^{\beta_0}+r^{\alpha})|Q|^{q-p}H(x^-,|Q|).
    \end{aligned}
    \end{equation*}
    
    We then combine the above estimates and use (\ref{eq:addtoshift}) to find
    \begin{equation*}
    \begin{aligned}        
        \int_{B_{r_2}}H_{|Q|}(x^-,|D\zeta|)\, dx&\leq c\int_{B_{r_2}}H_{|Q|}(x,|D\xi |)\, dx+\frac{1}{2}\int_{B_{r_2}}H_{|Q|}(x^-,|D\zeta|)\, dx\\
        &\qquad+cr^{\beta_1}\int_{B_{\frac{5}{3}r}}H(x,|Du|)\, dx+cr^n(r^{\beta_0}+r^{\alpha}|Q|^{q-p})H(x^-,|Q|).
    \end{aligned}
    \end{equation*}
    Subtracting $\frac{1}{2}\int_{B_{r_1}}H_{|Q|}(x^-,|D\zeta|)\, dx$ from both sides, we have that
    \begin{equation*}
        \begin{aligned}
            \int_{B_{r_2}}H_{|Q|}(x^-,|D\zeta|)\, dx&\leq c\int_{B_{r_2}}H_{|Q|}(x,|D\xi |)\, dx+cr^{\beta_1}\int_{B_{\frac{5}{3}r}}H(x,|Du|)\, dx\\
            &\qquad+cr^n(r^{\beta_0}+r^{\alpha}|Q|^{q-p})H(x^-,|Q|).
        \end{aligned}
    \end{equation*}
    We then use the definitions of $\zeta$ and $\xi$, and (\ref{eq:Duqintegrable}) to obtain the following inequality:
    \begin{equation*}
        \begin{aligned}
            &\int_{B_{r_1}}H_{|Q|}(x,|Du-Q|)\, dx \leq\int_{B_{r_2}}H_{|Q|}(x,|D\zeta|)\, dx\\
            &\leq \int_{B_{r_2}}H_{|Q|}(x^-,|D\zeta|)\, dx + \int_{B_{r_2}}(a(x)-a(x^-))(|Du|^q+|Q|^q)\, dx\\
            &\leq c\int_{B_{r_2}}H_{|Q|}(x,|D\xi |)\, dx+cr^{\beta_1}\int_{B_{\frac{5}{3}r}}H(x,|Du|)\, dx+cr^n(r^{\beta_0}+r^{\alpha}|Q|^{q-p})H(x^-,|Q|)\\
            &\leq c\int_{B_{r_2}\setminus B_{r_1}}H_{|Q|}(x,|Du-Q|)+H_{|Q|}\left(x,\frac{|u-\ell |}{r_2-r_1}\right)\, dx\\
            &\qquad +cr^{\beta_1}\int_{B_{\frac{5}{3}r}}H(x,|Du|)\, dx+cr^n(r^{\beta_0}+r^{\alpha}|Q|^{q-p})H(x^-,|Q|).
        \end{aligned}
    \end{equation*}
    Now, let us set $r_1=sr$ and $r_2=tr$ for any $1\leq s<t\leq\frac{4}{3}$ to find
    \begin{equation*}
        \begin{aligned}
            &\int_{B_{sr}}H_{|Q|}(x,|Du-Q|)\, dx\\
            &\leq c^*\int_{B_{tr}\setminus B_{sr}}H_{|Q|}(x,|Du-Q|)\, dx
            +\frac{c}{(t-s)^q}\int_{B_{\frac{5}{3}r}}H_{|Q|}\left(x,\frac{|u-\ell |}{r}\right)\, dx\\
            &\qquad +cr^{\beta_1}\int_{B_{\frac{5}{3}r}}H(x,|Du|)\, dx+cr^n(r^{\beta_0}+r^{\alpha}|Q|^{q-p})H(x^-,|Q|).
        \end{aligned}
    \end{equation*}
 Then adding $c^*\int_{ B_{sr}}H_{|Q|}(x,|Du-Q|)\, dx$ and applying the iteration Lemma \ref{lem:it} yields that
    \begin{equation}\label{eq:Caccioppolisemifinal}
    \begin{aligned}
        \int_{B_{r}}H_{|Q|}(x,|Du-Q|)\, dx&\leq c\int_{B_{\frac{5}{3}r}}H_{|Q|}\left(x,\frac{|u-\ell |}{r}\right)\, dx +cr^{\beta_1}\int_{B_{\frac{5}{3}r}}H(x,|Du|)\, dx\\
        	&\qquad +cr^n(r^{\beta_0}+r^{\alpha}|Q|^{q-p})H(x^-,|Q|).
    \end{aligned}
    \end{equation}
    Finally, using Lemma \ref{lem:Caccioppoli1} and (\ref{eq:addtoshift}),
    \begin{equation*}
    \begin{aligned}
        \int_{B_{\frac{5}{3}r}}H(x,|Du|)\, dx&\leq c\int_{B_{2r}}H\left(x,\frac{|u-Q(x-x_0)-(u)_{2r}|}{r}+2|Q|\right)\, dx\\
        &\leq c\int_{B_{2r}}H_{|Q|}\left(x,\frac{|u-\ell |}{r}\right)\, dx+cr^n\big(H(x^-,|Q|)+r^\alpha|Q|^q\big),
    \end{aligned}
    \end{equation*}
    and hence we plug the estimate into (\ref{eq:Caccioppolisemifinal}): 
    \begin{equation*}
        \fint_{B_{r}}H_{|Q|}(x,|Du-Q|)\, dx\leq c\fint_{B_{2r}}H_{|Q|}\left(x,\frac{|u-\ell |}{r}\right)\, dx+c(r^{\alpha_1}+r^\alpha|Q|^{q-p}) H(x^-,|Q|).
    \end{equation*}
    
\end{proof}

By means of Lemma \ref{lem:shiftsobolevpoincare}, Lemma \ref{lem:gehring} and  Lemma \ref{lem:Caccioppoli2}, we obtain the following self-improving estimates in the shifted setting.

\begin{lem}\label{lem:Higherintegrability2}
    Let $u\in W^{1,p}(\Omega,\mathbb{R}^N)$ with $H(\cdot,|Du|)\in L^1(\Omega)$ be a minimizer of (\ref{F}). Then there exist constants $\sigma_1>0$ and $c>0$ depending on $n,N,p,q,\alpha,[a]_{C^{0,\alpha}},L$ and $\nu$ such that for any $B_{2r}(x_0)\Subset\Omega$ with (\ref{eq:smallness}) and $Q\in\mathbb{R}^{N\times n}$,
    \begin{equation*}
    \begin{aligned}
        \left(\fint_{B_{r}(x_0)}H_{|Q|}(x,|Du-Q|)^{1+\sigma_1}\, dx\right)^{\frac{1}{1+\sigma_1}}
        &\leq c\fint_{B_{2r}(x_0)}H_{|Q|}(x,|Du-Q|)\, dx\\
        &\qquad +c(r^{\alpha_1}+r^\alpha|Q|^{q-p})H_{B_{2r}(x_0)}^+(|Q|),
    \end{aligned}
    \end{equation*}
    where 
    \begin{equation}\label{alpha2}
        \alpha_2:=\min\left\{\alpha_1,\alpha-\frac{(q-p)n}{p(1+\sigma_0)}\right\}=\min\left\{\alpha_0,\beta_0,\alpha-\frac{(q-p)n}{p(1+\sigma_0)}\right\}
    \end{equation}
and $\alpha_1$ is from \eqref{alpha1}.
 Moreover, for every $t\in(0,1]$, there exists a $c_t>0$ depending also on $t$, such that
    \begin{equation}\label{eq:tHigherintegrability2}
    \begin{aligned}
        \left(\fint_{B_{r}(x_0)}H_{|Q|}(x,|Du-Q|)^{1+\sigma_1}\, dx\right)^{\frac{1}{1+\sigma_1}}
        &\leq c_t\left(\fint_{B_{2r}(x_0)}H_{|Q|}(x,|Du-Q|)^t\, dx\right)^\frac{1}{t}\\
        &\qquad +c_t (r^{\alpha_1} +r^\alpha|Q|^{q-p})H_{B_{2r}(x_0)}^+(|Q|).
    \end{aligned}
    \end{equation}
\end{lem}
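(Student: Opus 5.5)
The plan is to reduce the statement to the Gehring-type Lemma~\ref{lem:gehring}, with $g:=H_{|Q|}(x,|Du-Q|)$ and $h$ a suitable constant term. The reverse Hölder inequality is obtained by chaining the shifted Caccioppoli inequality (Lemma~\ref{lem:Caccioppoli2}) with the shifted Sobolev--Poincaré inequality (Lemma~\ref{lem:shiftsobolevpoincare}).

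Fix $B_{2r}(x_0)$ satisfying \eqref{eq:smallness}, and take any ball $B=B_\rho(y)$ with $2B\subset B_{2r}(x_0)$. Then $2B$ also satisfies \eqref{eq:smallness}, so Lemma~\ref{lem:Caccioppoli2} applies on $B$ with the affine map $\ell(x)=Q(x-y)+(u)_{y,2\rho}$:
\[
\fint_{B}H_{|Q|}(x,|Du-Q|)\,dx\le c\fint_{2B}H_{|Q|}\Bigl(x,\tfrac{|u-\ell|}{\rho}\Bigr)dx+c(\rho^{\alpha_1}+\rho^\alpha|Q|^{q-p})H^-_{2B}(|Q|).
\]
Next I compare $H_{|Q|}(x,\cdot)$ with $H_{|Q|}(x^+_{2B},\cdot)$ pointwise, since $a(x)\le a^+_{2B}$, and then apply Lemma~\ref{lem:shiftsobolevpoincare} to $w:=u-\ell$, for which $Dw=Du-Q$ and $(w)_{y,2\rho}=0$. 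The a priori hypothesis $\|Dw\|_{L^{p(1+\sigma_0)}}\le1$ follows from Lemma~\ref{lem:Highertintegrability0} and \eqref{eq:smallness} after a harmless normalization; $|Q|$ is bounded up to the constant term in the bad case. This gives
\[
\fint_{2B}H_{|Q|}\Bigl(x,\tfrac{|u-\ell|}{\rho}\Bigr)dx\le c\Bigl(\fint_{2B}H_{|Q|}(x^-_{2B},|Du-Q|)^\theta dx\Bigr)^{1/\theta}+c(\rho^{\alpha(\sigma_0)}+\rho^\alpha|Q|^{q-p})|Q|^p.
\]
Since $H_{|Q|}(x^-_{2B},t)\le H_{|Q|}(x,t)$, this yields
\[
\fint_B g\le c\Bigl(\fint_{2B}g^\theta\Bigr)^{1/\theta}+c(r^{\alpha_2}+r^\alpha|Q|^{q-p})H^+_{B_{2r}}(|Q|),
\]
using $\rho\le r\le1$, $|Q|^p\le H^+(|Q|)$ and $H^-_{2B}\le H^+_{B_{2r}}$. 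This is exactly the hypothesis of Lemma~\ref{lem:gehring}, with $\gamma=\theta$ and $h$ the constant on the right, which lies in every $L^{s_0}$. Lemma~\ref{lem:gehring} then gives $\sigma_1$ and the first estimate on $B_r(x_0)\subset B_{2r}(x_0)$, after a standard covering to pass from the balls $B$ to $B_r(x_0)$.

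The $t$-version \eqref{eq:tHigherintegrability2} follows from the same reverse Hölder structure by the argument of \cite[Remark 6.12]{Giusti}. One interpolates $L^1$ between $L^t$ and $L^{1+\sigma_1}$ on intermediate balls, absorbs with Young's inequality, and iterates via Lemma~\ref{lem:it}. The constant term is unchanged apart from constants depending on $t$.

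The main obstacle is the middle step: matching the frozen functions $H_{|Q|}(x^\pm,\cdot)$ with the variable $H_{|Q|}(x,\cdot)$ so that the error terms are only of the form $(r^{\alpha_2}+r^\alpha|Q|^{q-p})H^+(|Q|)$. One also has to check that the higher-integrability hypothesis of Lemma~\ref{lem:shiftsobolevpoincare} holds uniformly on every subball, which relies on \eqref{eq:smallness} and $\sigma_0$ from Lemma~\ref{lem:Highertintegrability0}. If the direct replacement $a(x)\le a^+$ loses too much, I would instead split $H_{|Q|}(x,t)=\psi_{|Q|}(t)+a(x)\widetilde\psi_{|Q|}(t)$ via \eqref{eq:shiftislinear}. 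I would then estimate the difference $(a^+-a^-)\widetilde\psi_{|Q|}$ by $\rho^\alpha$ times $q$-powers, controlled through Lemma~\ref{lem:qintegrability}, exactly as in \eqref{eq:I24_2}.
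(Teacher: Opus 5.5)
Your route is the paper's: its proof is the single sentence combining Lemma~\ref{lem:Caccioppoli2}, Lemma~\ref{lem:shiftsobolevpoincare} and Lemma~\ref{lem:gehring}, with \cite[Remark 6.12]{Giusti} for \eqref{eq:tHigherintegrability2}. The steps you wrote are correct.
\begin{itemize}
\item Subballs inherit \eqref{eq:smallness}.
\item $(u-\ell)_{y,2\rho}=0$.
\item $a^-_{2B}\le a(x)\le a^+_{2B}$ gives both pointwise comparisons.
\item The constant term is monotone in the radius.
\end{itemize}
Your chain produces $r^{\alpha_2}$, not the $r^{\alpha_1}$ printed in the statement. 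That is clearly the intended version: $\alpha_2$ is defined in the statement and used in Lemma~\ref{lem:Higherintegrability3}, and Lemma~\ref{lem:shiftsobolevpoincare} only yields $r^{\alpha(\sigma_0)}$.

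The step that does not go through as written is the hypothesis $\|Du-Q\|_{L^{p(1+\sigma_0)}(2B)}\le 1$ of Lemma~\ref{lem:shiftsobolevpoincare}. The paper does not address it either. \eqref{eq:smallness} controls $Du$ but not the arbitrary matrix $Q$.

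Your normalization does handle the case $|Q|\,|2B|^{\frac{1}{p(1+\sigma_0)}}\le K$: apply the lemma to $(u-\ell)/(1+K)$ and use the $\Delta_2$ property of $H_{|Q|}$, which is uniform in the shift. The complementary case cannot be absorbed into the constant term, however, because that term carries the factor $\rho^{\alpha_2}+\rho^\alpha|Q|^{q-p}$. For $|Q|\approx K\rho^{-n/(p(1+\sigma_0))}$ this factor is of size $\rho^{\alpha(\sigma_0)}\to0$, while $\fint_B g$ can be of order $H^-_{2B}(|Q|)$.

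Instead, use the $(\fint g^\theta)^{1/\theta}$ term. Take $K=4$. Chebyshev and $\int_{2B}|Du|^{p(1+\sigma_0)}dx\le1$ give $|\{|Du|>|Q|/2\}\cap 2B|\le\frac12|2B|$. On the remaining half of $2B$ we have $|Du-Q|\ge|Q|/2$, hence $g\ge H_{|Q|}(x^-_{2B},|Q|/2)\ge cH^-_{2B}(|Q|)$, and so $(\fint_{2B}g^\theta)^{1/\theta}\ge cH^-_{2B}(|Q|)$. On the other side, \eqref{eq:addtoshift} and \eqref{eq:smallness} give
\[
\fint_B g\,dx\le c\Bigl(\fint_B H(x,|Du|)^{1+\sigma_0}dx\Bigr)^{\frac{1}{1+\sigma_0}}+cH^+_{2B}(|Q|)\le c|B|^{-\frac{1}{1+\sigma_0}}+cH^-_{2B}(|Q|)+c\rho^{\alpha}|Q|^{q-p}|Q|^p .
\]
In this case $|B|^{-\frac{1}{1+\sigma_0}}\le c|Q|^p\le cH^-_{2B}(|Q|)$. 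Therefore
\[
\fint_B g\,dx\le c\Bigl(\fint_{2B}g^\theta dx\Bigr)^{1/\theta}+cr^\alpha|Q|^{q-p}H^+_{B_{2r}(x_0)}(|Q|)
\]
holds directly, without Sobolev--Poincar\'e. With this case split, Lemma~\ref{lem:gehring} applies on every ball as you planned.
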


Using the estimate \eqref{eq:tHigherintegrability2} with $t=1/q$ and $Q=(Du)_{x_0,2r}$, and following the same argument as in the proof of  \cite[Lemma 3.5]{OSS25},  we further obtain the following estimates.

\begin{lem}\label{lem:Higherintegrability3}
    Let $u\in W^{1,p}(\Omega,\mathbb{R}^N)$ with $H(\cdot,|Du|)\in L^1(\Omega)$ be a minimizer of (\ref{F}) and let $\sigma_1>0$ be from Lemma~\ref{lem:Higherintegrability2}. For any $B_{2r}(x_0)\Subset\Omega$ with (\ref{eq:smallness}) and $r\leq\frac{1}{2}$, there exists $c=c(n,N,p,q,\alpha,[a]_{C^{0,\alpha}},L,\nu)>0$ such that
    \begin{equation*}
    \begin{aligned}
        &\left(\fint_{B_r(x_0)}H_{|(Du)_{x_0,2r}|}(x,|Du-(Du)_{x_0,2r}|)^{1+\sigma_1}\, dx\right)^{\frac{1}{1+\sigma_1}}\\
        &\leq c(H_{B_{2r}(x_0)}^-)_{|(Du)_{x_0,2r}|}\left(\fint_{B_{2r}(x_0)}|Du-(Du)_{x_0,2r}|\, dx\right)+cr^{\alpha_2}H_{B_{2r}(x_0)}^-(|(Du)_{x_0,2r}|),
    \end{aligned}
    \end{equation*}
    where $\alpha_2$ is from the previous lemma.
 \end{lem}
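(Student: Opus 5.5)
We apply \eqref{eq:tHigherintegrability2} with $Q:=(Du)_{x_0,2r}$ and $t:=1/q$. After that we must bound the two resulting terms by the frozen, shifted quantity on the right-hand side of the claim. Throughout we write $x^\pm:=x^\pm_{x_0,2r}$, $Q:=(Du)_{x_0,2r}$ and $\varphi:=H^-_{B_{2r}(x_0)}=H(x^-,\cdot)$. To keep the radii straight, we would first apply Lemma~\ref{lem:Higherintegrability2} on a pair of intermediate balls and finish with a covering argument; we suppress this below.

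\textbf{Step 1: the $t$-mean term.} We split pointwise
\[
H_{|Q|}(x,|Du-Q|)\le \varphi_{|Q|}(|Du-Q|)+(a(x)-a^-)\,\widetilde\psi_{|Q|}(|Du-Q|),
\]
using \eqref{eq:shiftislinear}. Here $\widetilde\psi_{|Q|}(s)\lesssim(|Q|+s)^{q-2}s^2\le(|Du|+2|Q|)^q$.
\begin{itemize}
\item For the frozen part we use that $s\mapsto\varphi_{|Q|}(s)^{1/q}$ is almost concave (almost increasing, with $\varphi_{|Q|}(s)^{1/q}/s$ almost decreasing, since $\varphi_{|Q|}$ has upper index $\le q$). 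By the almost-concavity lemma and Jensen,
\[
\Big(\fint\varphi_{|Q|}(|Du-Q|)^{1/q}\Big)^q\lesssim \varphi_{|Q|}\Big(\fint|Du-Q|\Big).
\]
This is why $t=1/q$ is chosen.
\item For the perturbation part we bound $\big(\fint[(a-a^-)(|Du|+|Q|)^q]^{1/q}\big)^q\le \fint(a-a^-)|Du|^q+cr^\alpha|Q|^q$. Then \eqref{eq:qintegrabilityq} gives
\[
\fint(a-a^-)|Du|^q\le c r^{\alpha_0}\varphi\Big(\fint|Du|\Big)\lesssim r^{\alpha_0}\big[\varphi_{|Q|}\big(\textstyle\fint|Du-Q|\big)+\varphi(|Q|)\big],
\]
using $\fint|Du|\le|Q|+\fint|Du-Q|$ and \eqref{eq:addtoshift}.
\end{itemize}

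\textbf{Step 2: the remainder term.} We need $(r^{\alpha_1}+r^\alpha|Q|^{q-p})H^+(|Q|)\lesssim r^{\alpha_2}\varphi(|Q|)+\varphi_{|Q|}(\fint|Du-Q|)$. First, $H^+(|Q|)\le\varphi(|Q|)+cr^\alpha|Q|^q$. The difficulty is $|Q|^{q-p}$, which is not a priori bounded; this is the main obstacle. By Jensen and \eqref{eq:smallness},
\[
|Q|^{p(1+\sigma_0)}\le\fint_{B_{2r}}|Du|^{p(1+\sigma_0)}\le cr^{-n}.
\]
Hence $r^\alpha|Q|^{q-p}\le c\,r^{\alpha-\frac{n(q-p)}{p(1+\sigma_0)}}$, an exponent that is positive and at least $\alpha_2$ by \eqref{a} and the definition of $\alpha_2$. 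This is exactly how the second entry in $\alpha_2$ arises, following \cite[Lemma 3.5]{OSS25}. Both $r^{\alpha}|Q|^{q-p}\varphi(|Q|)$ and $r^{2\alpha}|Q|^{2q-p}$-type terms are handled in the same way, using $r\le 1$.

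\textbf{Step 3: collect and rename.} Combining Steps 1 and 2, with $r^{\alpha_0},r^{\alpha_1}\le r^{\alpha_2}$ and $r\le 1$, gives the asserted inequality with a constant depending only on the listed data.
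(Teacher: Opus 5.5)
\textbf{Overall.} Your outline is the paper's: apply \eqref{eq:tHigherintegrability2} with $Q=(Du)_{x_0,2r}$ and a small $t$, split $H_{|Q|}(x,\cdot)=\varphi_{|Q|}+(a(x)-a^-)\widetilde\psi_{|Q|}$ via \eqref{eq:shiftislinear}, use almost concavity for the frozen part, and absorb $|Q|^{q-p}$ through \eqref{eq:smallness}. Your Step 2 is correct.

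\textbf{The gap: the perturbation part of Step 1.} The $t$-mean in \eqref{eq:tHigherintegrability2} lives on $B_{2r}$. After you remove the root by Jensen, you therefore need a bound for $\fint_{B_{2r}}(a-a^-_{x_0,2r})|Du|^q\,dx$ on the whole ball. But \eqref{eq:qintegrabilityq} only bounds the mean over $B_r$ by the $L^1$-mean over $B_{2r}$. It is a reverse-H\"older-type estimate and cannot hold with the same ball on both sides.

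The ``intermediate balls and covering'' you suppress is not routine. You would need a version of \eqref{eq:qintegrabilityq} from $B_{sr}$, $1<s<2$, into $B_{2r}$ with the fixed infimum $a^-_{x_0,2r}$. Covering by smaller balls $B'$ changes the frozen infimum from ball to ball. This leaves terms $(a^-_{B'}-a^-_{x_0,2r})|Du|^q$, which need a further interpolation with the higher integrability and may degrade the exponent.

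\textbf{The fix: keep the root.} Use the pointwise bound $0\le a(x)-a^-_{x_0,2r}\le[a]_{C^{0,\alpha}}(4r)^\alpha$ on $B_{2r}$. Together with $\widetilde\psi_{|Q|}(|Du-Q|)\lesssim(|Du|+2|Q|)^q$ and Jensen (valid since $tq\le1$), this gives
\[
\Big(\fint_{B_{2r}}\big[(a-a^-_{x_0,2r})\,\widetilde\psi_{|Q|}(|Du-Q|)\big]^{t}\,dx\Big)^{1/t}\le c\,r^{\alpha}\Big(\fint_{B_{2r}}|Du|\,dx+|Q|\Big)^{q}.
\]
Now run your Step 2 argument. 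By \eqref{eq:smallness}, $|Q|\le\fint_{B_{2r}}|Du|\,dx\le c\,r^{-n/(p(1+\sigma_0))}$. Hence the right-hand side is at most
\[
c\,r^{\alpha-\frac{n(q-p)}{p(1+\sigma_0)}}\Big[\varphi_{|Q|}\Big(\fint_{B_{2r}}|Du-Q|\,dx\Big)+\varphi(|Q|)\Big].
\]
This is where the last entry of \eqref{alpha2} comes from. It uses neither \eqref{eq:qintegrabilityq} nor any covering. Keeping the root is precisely why one takes $t$ of order $1/q$.

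\textbf{A smaller correction: the upper index of $\varphi_{|Q|}$.} $\varphi_{|Q|}$ has upper index $\max\{q,2\}$, not $q$: for $s\ll|Q|$ one has $\varphi_{|Q|}(s)\sim\varphi''(|Q|)s^2$. So for $q<2$, $\varphi_{|Q|}^{1/q}$ is not almost concave, and the inequality $(\fint\varphi_{|Q|}(g)^{1/q})^q\lesssim\varphi_{|Q|}(\fint g)$ fails.

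To see this, take $g=M\chi_E$ with $M\le|Q|$ and $|E|=\varepsilon|B_{2r}|$. The left side is $\sim\varepsilon^{q}\varphi''(|Q|)M^2$, while the right side is $\sim\varepsilon^{2}\varphi''(|Q|)M^2$, so the ratio blows up as $\varepsilon\to0$.

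Take $t=1/\max\{q,2\}$ instead, which \eqref{eq:tHigherintegrability2} allows since it holds for every $t\in(0,1]$. The paper's one-line sketch with $t=1/q$ needs the same adjustment; it uses $\max\{q,2\}$ and $\min\{p,2\}$ elsewhere for exactly this reason.
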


\section{Nondegenerate regime}

Let $u\in W^{1,p}(\Omega,\mathbb{R}^N)$ with $H(\cdot,|Du|)\in L^1(\Omega)$ be a minimizer of (\ref{F}), where  the function $H$ in \eqref{H} satisfies \eqref{a} and  the function $f(x,P)$ in \eqref{F} satisfies \eqref{C1}--\eqref{C6}. In this section, we consider the nondegenerate regime \eqref{eq:smalldecaycondition}.
We define the following excess functionals:
\begin{equation*}
    E(x_0,r,Q):=\fint_{B_r(x_0)}|V_{H_{B_r(x_0)}^-}(Du)-V_{H_{B_r(x_0)}^-}(Q)|^2\, dx,
\end{equation*}
and
\begin{equation}\label{Phi}
    \varPhi(x_0,r,Q):=\frac{E(x_0,r,Q)}{H_{B_r(x_0)}^-(|Q|)}.
\end{equation}
In the case $Q=(Du)_{x_0,r}$, we denote 
$$
E(x_0,r) :=  E(x_0,r,(Du)_{x_0,r})
\quad\text{and}\quad
\varPhi(x_0,r):= \varPhi(x_0,r,(Du)_{x_0,r}).
$$
Note that by (\ref{eq:Vtoshift}) and (\ref{eq:Vmean}),
\begin{equation}\label{meanexcess}
\begin{aligned}
    E(x_0,r)&\sim\fint_{B_r(x_0)}H_{|(Du)_{x_0,r}|}(x_{x_0,r}^-,|Du-(Du)_{x_0,r}|)\, dx\\
    &\sim\fint_{B_r(x_0)}|V_{H_{B_r(x_0)}^-}(Du)-(V_{H_{B_r(x_0)}^-}(Du))_{x_0,r}|^2\, dx.
\end{aligned}
\end{equation}
We find an $\mathcal{A}$-harmonic approximation where
\begin{equation}\label{A}
    \mathcal{A}(Q):=\frac{\partial^2 f(x_{x_0,2r}^-,Q)}{H''(x_{x_0,2r}^-,|Q|)},\qquad Q\in\mathbb{R}^{N\times n}.
\end{equation}
Observe that this $\mathcal{A}$ satisfies the Legendre-Hadamard condition \eqref{eq:LH1} by \eqref{ass:gc} and \eqref{eq:LH}.

We note that \eqref{C5} implies the H\"older continuity of $f(x,P)$ in $x$. Moreover, from this together with \eqref{C1} and \eqref{C3}, we further find that $\partial f(x,P)$ is H\"older continuous in $x$.

\begin{lem}\label{lem:(A4.5)}
 For every $x_1,x_2\in B_\rho(x_0)\subset\Omega$ with $\rho\leq\frac{1}{2}$ and every $P\in\mathbb{R}^{N\times n}$,
    \begin{equation}\label{eq:(A4.5)}
    \begin{aligned}
        &|\partial f(x_1,P)-\partial f(x_2,P)|\\
         &\leq c \Big(|x_1-x_2|^\frac{\beta_0}{2}+|a(x_1)-a(x_2)|^\frac{1}{2}\Big) \big(H'(x_1,|P|)+H'(x_2,|P|)\big)\\
        &\qquad + c |a(x_1)-a(x_2)| |P|^{q-1}.
    \end{aligned}
    \end{equation}
where    $c=c(n,N,p,q,L)>0$.
\end{lem}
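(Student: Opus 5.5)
The estimate \eqref{eq:(A4.5)} is a pure interpolation statement about the difference
\[
g(P):=f(x_1,P)-f(x_2,P).
\]
We know $g$ itself is small by \eqref{C5}, and its Hessian is controlled by \eqref{C3}. The plan is to recover $\partial g$ from a first-order Taylor expansion with a step length adapted to $|P|$. Since $\partial f(x,\cdot)$ is continuous by \eqref{C1}, we may assume $P\neq\mathbf 0$. Set
\[
\varepsilon(Q):=L|x_1-x_2|^{\beta_0}\big(H(x_1,|Q|)+H(x_2,|Q|)\big)+L|a(x_1)-a(x_2)||Q|^q ,
\]
so that $|g(Q)|\le \varepsilon(Q)$. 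Abbreviate $\mathcal H(t):=H(x_1,t)+H(x_2,t)$, which is again an N-function satisfying \eqref{eq:N'incrange} with $p_1=p$, $p_2=q$.

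\textbf{Taylor step.} Fix a unit matrix $e$ and a step $h=\lambda|P|$ with $\lambda\in(0,1]$. Then
\[
h\langle\partial g(P)\,|\,e\rangle=g(P+he)-g(P)-\int_0^h(h-s)\langle\partial^2 g(P+se)e\,|\,e\rangle\,ds .
\]
The segment may pass through $\mathbf 0$, where $\partial^2f$ is not defined. There I would argue with $\partial g$ along the segment, which is continuous, or note that $H''(x,t)\sim t^{p-2}$ is integrable on segments because $p>1$. By the second line of \eqref{ass:gc}, $|\partial^2 g|\le L\mathcal H''$. By Lemma~\ref{lem:phi''line} together with \eqref{eq:Neqv}, the remainder is bounded by
\[
h\int_0^h\mathcal H''(|P+se|)\,ds\lesssim h^2\frac{\mathcal H'(|P|+h)}{|P|+h}\lesssim \lambda h\,\mathcal H'(|P|).
\]
The first two terms are at most $2\sup_{|Q|\le2|P|}\varepsilon(Q)\lesssim\varepsilon(P)$, using \eqref{eq:Nconstant}. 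Dividing by $h$, taking the supremum over $e$, and using $\mathcal H(t)\sim t\mathcal H'(t)$ gives
\[
|\partial g(P)|\lesssim \frac{1}{\lambda}\Big(|x_1-x_2|^{\beta_0}\mathcal H'(|P|)+|a(x_1)-a(x_2)||P|^{q-1}\Big)+\lambda\,\mathcal H'(|P|).
\]

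\textbf{Choice of $\lambda$.} For the $x$-part, the optimal choice is $\lambda\approx|x_1-x_2|^{\beta_0/2}$, which yields $|x_1-x_2|^{\beta_0/2}\mathcal H'(|P|)$. The main obstacle is the $a$-part. The naive choice $\lambda=|a(x_1)-a(x_2)|^{1/2}$ produces $|a(x_1)-a(x_2)|^{1/2}|P|^{q-1}$, and this is not controlled by $\mathcal H'$ when both $a(x_i)$ are small. I would instead choose $\lambda$ depending on $P$:
\[
\lambda:=\min\Big\{1,\Big(|x_1-x_2|^{\beta_0}+\frac{|a(x_1)-a(x_2)||P|^q}{\mathcal H(|P|)}\Big)^{1/2}\Big\}.
\]
Here I use the elementary bound $|a(x_1)-a(x_2)||P|^q\le\max_i a(x_i)|P|^q\le\mathcal H(|P|)$, so the ratio inside lies in $[0,1]$. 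With this $\lambda$, the bound reduces to $\mathcal H'(|P|)$ times the square root of that ratio plus $|x_1-x_2|^{\beta_0/2}$.

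\textbf{Closing the $a$-part.} It remains to check that
\[
\Big(\frac{|a(x_1)-a(x_2)||P|^q}{\mathcal H(|P|)}\Big)^{1/2}\mathcal H'(|P|)\lesssim |a(x_1)-a(x_2)|^{1/2}\mathcal H'(|P|)+|a(x_1)-a(x_2)||P|^{q-1}.
\]
I would split into two cases. If $|P|^q\lesssim\mathcal H(|P|)$, the left side is directly bounded by the first term on the right. Otherwise $\mathcal H(|P|)\sim|P|^p+\max_i a(x_i)|P|^q$, and I would use Young's inequality $\sqrt{AB}\le A+B$ to move the excess into the term $|a(x_1)-a(x_2)||P|^{q-1}$. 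This is precisely why the statement carries the extra term with full power of $|a(x_1)-a(x_2)|$. The case $\lambda=1$ is immediate, since then both summands are already of the claimed form. The constants depend only on $n,N,p,q,L$, as required, and the restriction $\rho\le\tfrac12$ only ensures $|x_1-x_2|\le1$.
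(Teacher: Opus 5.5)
Your Taylor step and the $P$-dependent choice of $\lambda$ are fine. Capping $\lambda\le\frac12$, as the paper does via $|Q|\le\frac12|P|$, would also remove the issue of segments through $\mathbf 0$. Together they give the sharp intermediate bound
\[
|\partial f(x_1,P)-\partial f(x_2,P)|\lesssim\Big(|x_1-x_2|^{\beta_0/2}+\sqrt{R}\Big)\mathcal H'(|P|),\qquad R:=\frac{\delta_a|P|^q}{\mathcal H(|P|)},\quad \delta_a:=|a(x_1)-a(x_2)|.
\]
The gap is the step ``Closing the $a$-part'': the inequality $\sqrt{R}\,\mathcal H'(|P|)\lesssim\delta_a^{1/2}\mathcal H'(|P|)+\delta_a|P|^{q-1}$ is false. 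Take $q>p$, $a(x_1)=0$, $a(x_2)=\epsilon$ and $|P|^{q-p}=\epsilon^{-1/2}$. Then $\mathcal H(|P|)\sim|P|^p$ and $R\sim\epsilon^{1/2}$, so the left side is $\sim\epsilon^{1/4}|P|^{p-1}$. The right side is only $\sim\epsilon^{1/2}|P|^{p-1}$, because $\epsilon|P|^{q-1}=\epsilon^{1/2}|P|^{p-1}$. In your ``otherwise'' case the left side is the geometric mean $\delta_a^{1/2}|P|^{(p+q)/2-1}$ of $|P|^{p-1}$ and $\delta_a|P|^{q-1}$. At this $|P|$ it exceeds both $\delta_a^{1/2}|P|^{p-1}$ and $\delta_a|P|^{q-1}$ by the factor $\epsilon^{-1/4}$, so no Young splitting can absorb it.

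This is not a bookkeeping slip that can be fixed. Your intermediate bound is essentially $\sqrt{\varepsilon(P)\,\mathcal H''(|P|)}$, which is optimal under \eqref{C3} and \eqref{C5}, so the stated estimate cannot be derived from the hypotheses the lemma uses. To see this, fix a small $\theta>0$ and set $T^{q-p}=\epsilon^{-1/2}$ and $h=\epsilon^{1/4}T$. Take $\eta\in C^\infty_c((\frac12,2))$ with $0\le\eta\le1$ and $\eta(1)=1$, and $0\le a\le\epsilon$ with $a(x_1)=0$, $a(x_2)=\epsilon$, $|x_1-x_2|^{\beta_0}=\epsilon$. Then
\[
f(x,P)=|P|^p+a(x)|P|^q+\theta\,a(x)\,T^q\,\eta(|P|/T)\sin\big((|P|-T)/h\big)
\]
satisfies \eqref{C1}--\eqref{C5} and \eqref{C7} with constants independent of $\epsilon$. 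Indeed, the perturbation has Hessian $\lesssim\theta|P|^{p-2}$ on its support, so $f(x,\cdot)$ stays uniformly convex. Yet for $|e|=1$ one has $|\partial f(x_1,Te)-\partial f(x_2,Te)|\ge\theta\epsilon T^q/h=\theta\epsilon^{1/4}T^{p-1}$, whereas the right-hand side of \eqref{eq:(A4.5)} is $\lesssim\epsilon^{1/2}T^{p-1}$.

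The paper's proof is the same Taylor argument with your ``naive'' $P$-independent step $|Q|=c|P|(|x_1-x_2|^{\beta_0/2}+\delta_a^{1/2})$. With that choice, the contribution $L\delta_a|P|^q/|Q|$ of $l_1$ is only bounded by $c\,\delta_a^{1/2}|P|^{q-1}$, which is exactly the term you rejected. So the paper's final ``combining'' step does not produce the stated form either. What is actually provable, by the paper's route or by yours (since $\sqrt R\,\mathcal H'(|P|)\lesssim\delta_a^{1/2}(\mathcal H'(|P|)+|P|^{q-1})$), is \eqref{eq:(A4.5)} with $|a(x_1)-a(x_2)|^{1/2}|P|^{q-1}$ in place of $|a(x_1)-a(x_2)||P|^{q-1}$. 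The later estimate of $J_3$ would have to be run with that weaker form, or with your sharper bound.
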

\begin{proof}
  We follow the argument introduced in \cite[Appendix A]{S09}. Note that the case when $P={\bf0}$ is trivial since $|\partial f(x,{\bf 0})|=0$ by the first inequality \eqref{C3}.  Fix $x_1,x_2\in B_\rho(x_0)\subset\Omega$ with $\rho\leq\frac{1}{2}$ and $P \in\mathbb{R}^{N\times n}\setminus\{{\bf 0}\}$.  For $Q\in\mathbb{R}^{N\times n}$ with $|Q|\leq\frac{1}{2}|P|$,
    \begin{equation*}
    \begin{aligned}
        &\langle\,  \partial f(x_1,P)-\partial f(x_2,P)\, |\, Q\, \rangle\\
        &=f(x_1,P+Q)-f(x_2,P+Q) + f(x_2,P)-f(x_1,P)\\
        &\qquad+\int_0^1\int_0^1 \Big[ \langle\,  \partial^2 f(x_2,P+stQ)Q\, |\, Q\, \rangle - \langle\,  \partial^2 f(x_1,P+st Q)Q\, |\, Q\, \rangle \Big]\, ds\, dt\\
        &=l_1+l_2.
    \end{aligned}
    \end{equation*}
    By \eqref{C5} and \eqref{C3}, we have that
    \begin{equation*}
        l_1 \leq L|x_1-x_2|^{\beta_0}\big(H(x_1,|P|)+H(x_2,|P|)\big)+L|a(x_1)-a(x_2)||P|^q,
    \end{equation*}
       \begin{equation*}
        l_2 \leq L|Q|^2 \int_0^1\int_0^1H''(x_1,|P+stQ|)+H''(x_2,|P+stQ|)\, ds\, dt .
    \end{equation*}
    We notice from $|Q|\leq\frac{1}{2}|P|$ that 
    \begin{equation*}
        \begin{dcases}
            \int_0^1\int_0^1|P+st Q|^{\gamma-2}\, ds\, dt\leq\left(\frac{1}{2}|P|\right)^{\gamma-2},\qquad \gamma<2,\\
            \int_0^1\int_0^1|P+st Q|^{\gamma-2}\, ds\, dt\leq\left(\frac{3}{2}|P|\right)^{\gamma-2},\qquad \gamma\geq2.
        \end{dcases}        
    \end{equation*}
    Therefore, 
    \begin{equation*}
        |l_2|\leq \max\left\{2^{2-p},\Big(\frac{3}{2}\Big)^{q-2}\right\}L \left(H''(x_1,|P|)+H''(x_2,|P|)\right)|Q|^2.
    \end{equation*}
    Now choose $Q\in\mathbb{R}^{N\times n}$ that satisfies the following :
    \begin{equation*}
        \begin{dcases}
            \langle\,  \partial f(x_1,P)-\partial f(x_2,P)\, |\, Q\, \rangle=| \partial f(x_1,P)-\partial f(x_2,P)||Q|,\\
            |Q|=|P|\left(|x_1-x_2|^{\frac{\beta_0}{2}}+|a(x_1)-a(x_2)|^{\frac{1}{2}}\right)\frac{1}{2(1+\sqrt{2L})}\leq\frac{1}{2}|P|,
        \end{dcases}
    \end{equation*}
    where the last inequality uses the conditions $0\leq a(x)\leq L$ and $\rho\leq\frac{1}{2}$. Combining all the above results, we have \eqref{eq:(A4.5)}.
\end{proof}
Note that in the proof of the previous lemma we do not use \eqref{a} for $H$ and  \eqref{C2} and \eqref{C4} for $f$. 
Now, we show that the function $u- Q(x-x_0)-(u)_{x_0,2r}$ is almost $\mathcal{A}$-harmonic. 
\begin{lem}
There exists  a constant $c=c(n,N,p,q,\alpha,[a]_{C^{0,\alpha}},L,\nu)>0$ such that  for any ball $B_{2r}(x_0)\Subset\Omega$ satisfying \eqref{eq:smallness} with $r\leq\frac{1}{2}$, the following estimate holds:
    \begin{equation}\label{eq:Aapproximation}
    \begin{aligned}
        &\left|\fint_{B_r(x_0)}\langle\, \mathcal{A}(Q)(Du-Q)\, |\, D\zeta\, \rangle\, dx\right|\\
        &\leq c \left(\varPhi(x_0,2r,Q)+\varPhi(x_0,2r,Q)^{\frac{1+\beta_0}{2}}+ r^{\frac{\alpha_1}{2}}\big(1+\varPhi(x_0,2r,Q)\big)^{\frac{q-1}{p}}\right)|Q|
    \end{aligned}
    \end{equation}
    for all $\zeta\in W_0^{1,\infty}(B_r(x_0),\R^N)$ with $\|D\zeta\|_{L^\infty(B_r(x_0))}\le1$, where $\alpha_1$ is from \eqref{alpha1}.
\end{lem}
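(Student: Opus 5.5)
The plan is to test the Euler--Lagrange system of $\mathcal F$ against $\zeta$ and to compare the resulting quantities at the fixed point $x^-:=x^-_{x_0,2r}$, which is the point used to define $\mathcal A(Q)$ in \eqref{A}. By \eqref{C1} and \eqref{C3}, $u$ satisfies $\int_{B_r}\langle \partial f(x,Du)\,|\,D\zeta\rangle\,dx=0$. Moreover $\int_{B_r}\langle \partial f(x^-,Q)\,|\,D\zeta\rangle\,dx=0$, since the integrand is a constant matrix paired with $D\zeta$ and $\zeta\in W^{1,\infty}_0$. Multiplying the left-hand side of \eqref{eq:Aapproximation} by $H''(x^-,|Q|)$ therefore gives
\begin{equation*}
\begin{aligned}
H''(x^-,|Q|)\fint_{B_r}\langle \mathcal A(Q)(Du-Q)\,|\,D\zeta\rangle\,dx
&=\fint_{B_r}\big\langle \partial^2 f(x^-,Q)(Du-Q)-\big(\partial f(x^-,Du)-\partial f(x^-,Q)\big)\,\big|\,D\zeta\big\rangle\,dx\\
&\quad+\fint_{B_r}\big\langle \partial f(x^-,Du)-\partial f(x,Du)\,\big|\,D\zeta\big\rangle\,dx=:J_1+J_2 .
\end{aligned}
\end{equation*}
Since $H''(x^-,|Q|)|Q|\sim H'(x^-,|Q|)$, it suffices to show that $|J_1|+|J_2|$ is bounded by $H'(x^-,|Q|)$ times the bracket in \eqref{eq:Aapproximation}. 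Throughout I use $\|D\zeta\|_\infty\le 1$, the fact that averages over $B_r$ are controlled by $2^n$ times averages over $B_{2r}$, and the equivalence $E(x_0,2r,Q)\sim\fint_{B_{2r}}H_{|Q|}(x^-,|Du-Q|)\,dx$ coming from \eqref{eq:Vtoshift}.

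\textbf{Term $J_1$.} Write $\partial f(x^-,Du)-\partial f(x^-,Q)=\int_0^1\partial^2 f(x^-,Q+t(Du-Q))(Du-Q)\,dt$ and split $B_r$ into two sets. On the set $\{|Du-Q|\le |Q|/2\}$, assumption \eqref{C6} bounds the integrand by $cH''(x^-,|Q|)|Q|^{-\beta_0}|Du-Q|^{1+\beta_0}$. Applying Hölder with exponent $2/(1+\beta_0)$ and using $H_{|Q|}(x^-,t)\sim H''(x^-,|Q|)t^2$ for $t\le|Q|$ (by \eqref{eq:shifteqv}), this contribution is at most $cH'(x^-,|Q|)\varPhi(x_0,2r,Q)^{\frac{1+\beta_0}{2}}$. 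On the set $\{|Du-Q|>|Q|/2\}$, I use \eqref{ass:gc} and \eqref{eq:differencetoshift}. The integrand is then bounded by $H''(x^-,|Q|)|Du-Q|+H'_{|Q|}(x^-,|Du-Q|)$, and on this set both pieces are $\lesssim H_{|Q|}(x^-,|Du-Q|)/|Q|$. That gives $cH'(x^-,|Q|)\varPhi(x_0,2r,Q)$.

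\textbf{Term $J_2$.} I apply Lemma~\ref{lem:(A4.5)} with $x_1=x$ and $x_2=x^-$. For $x\in B_r$ we have $|x-x^-|\le 3r$ and $|a(x)-a(x^-)|\le cr^\alpha$, so the prefactor is at most $cr^{\alpha_1/2}$. This leaves three averages to control:
\begin{itemize}
\item $\fint_{B_r}H'(x^-,|Du|)\,dx$;
\item $\fint_{B_r}H'(x,|Du|)\,dx$, which equals the first average plus a term of the form $(a(x)-a^-_{x_0,2r})|Du|^{q-1}$;
\item the extra term $\fint_{B_r}(a(x)-a(x^-))|Du|^{q-1}\,dx$.
\end{itemize}
Both $(a-a^-)|Du|^{q-1}$ averages are handled by \eqref{eq:qintegrabilityq-1}, which gives the bound $cr^{\alpha_0}(H^-_{B_{2r}})'\big(\fint_{B_{2r}}|Du|\,dx\big)$. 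For the remaining quantities I use $\fint_{B_{2r}}|Du|\le|Q|+\fint_{B_{2r}}|Du-Q|$ and \eqref{eq:addtoshift}, which yield $\fint_{B_{2r}}H(x^-,|Du|)\lesssim H(x^-,|Q|)(1+\varPhi)$ and similarly $H^-_{B_{2r}}\big(\fint_{B_{2r}}|Du|\big)\lesssim H(x^-,|Q|)(1+\varPhi)$.

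To pass from $H$ back to $H'$, I use the elementary comparison $H'(x^-,\lambda s)/H'(x^-,s)\le\big(H(x^-,\lambda s)/H(x^-,s)\big)^{\frac{q-1}{p}}$ for $\lambda\ge1$. It follows from $H'(\lambda s)\le\lambda^{q-1}H'(s)$ and $H(\lambda s)\ge\lambda^pH(s)$. Because $(q-1)/p<1$ by \eqref{a}, the map $t\mapsto t^{(q-1)/p}$ is concave. Combined with Jensen's inequality, this gives both $\fint H'(x^-,|Du|)$ and $(H^-_{B_{2r}})'(\fint|Du|)$ bounded by $cH'(x^-,|Q|)(1+\varPhi)^{\frac{q-1}{p}}$. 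Altogether $|J_2|\le c\,r^{\alpha_1/2}(1+\varPhi)^{\frac{q-1}{p}}H'(x^-,|Q|)$, where $r^{\alpha_0}\le r^{\alpha_1/2}$ since $r\le 1$.

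The main obstacle is $J_2$, specifically producing exactly the exponent $\frac{q-1}{p}$ while keeping all comparisons at the single frozen point $x^-$. I would try the $H$-to-$H'$ comparison described above first. The delicate points are applying Jensen to a convex $H$ and then a concave power, and absorbing the two different phases of $H'(x,\cdot)$ via \eqref{eq:qintegrabilityq-1} rather than through the crude bound $a\le L$.
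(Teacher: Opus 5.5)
Your decomposition is the paper's. Your $J_1$ is the sum of the paper's $J_1$ and $J_2$ (split along $\{|Du-Q|\ge|Q|/2\}$), and your $J_2$ is the paper's $J_3$. Your estimates for $J_1$, for the prefactor from Lemma~\ref{lem:(A4.5)}, and for the terms controlled by \eqref{eq:qintegrabilityq-1} are correct.

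\textbf{The step that fails} is the bound for $\fint_{B_r}H'(x^-,|Du|)\,dx$. You justify Jensen's inequality by claiming $(q-1)/p<1$ ``by \eqref{a}''. But \eqref{a} only gives $q-p\le p\alpha/n$, which does not force $(q-1)/p\le 1$ when $p\alpha>n$. For example, $n=2$, $\alpha=1$, $p=4$, $q=11/2$ is admissible ($q/p=11/8\le 3/2$) and gives $(q-1)/p=9/8$. In that range $t\mapsto t^{(q-1)/p}$ is convex and Jensen goes the wrong way. Integrating your pointwise bound $H'(x^-,|Du|)\le c\,H'(x^-,|Q|)\big(1+H(x^-,|Du|)/H(x^-,|Q|)\big)^{\frac{q-1}{p}}$ would then need a reverse H\"older estimate for $H(x^-,|Du|)$ with exponent $(q-1)/p$. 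Lemma~\ref{lem:Highertintegrability0} only supplies the exponent $1+\sigma_0$, with $\sigma_0$ possibly small. Your treatment of $(H^-_{B_{2r}})'\big(\fint_{B_{2r}}|Du|\,dx\big)$ is unaffected, because there the comparison is applied at a single value, after Jensen for the convex function $H(x^-,\cdot)$.

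\textbf{The repair is short}, and there are two ways to do it.
\begin{enumerate}
\item Do as the paper does. First apply \eqref{eq:H'oderintegrability}, $\fint_{B_r}H'(x^-,|Du|)\,dx\le c\,H'\big(x^-,\fint_{B_{2r}}|Du|\,dx\big)$, which reduces this term to the single-value case you already handle. The paper then bounds $\big(\fint_{B_{2r}}|Du|^p\,dx\big)^{1/p}\lesssim|Q|(1+\varPhi)^{1/p}$ and uses $H'(x^-,\lambda s)\le\lambda^{q-1}H'(x^-,s)$ for $\lambda\ge1$.
\item Keep your direct route but use a sharper comparison: $H'(x^-,\lambda s)/H'(x^-,s)\le\frac{q}{p}\big(H(x^-,\lambda s)/H(x^-,s)\big)^{\frac{q-1}{q}}$ for $\lambda\ge1$. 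This follows from $p\le tH'(x^-,t)/H(x^-,t)\le q$ together with $H(x^-,\lambda s)\le\lambda^qH(x^-,s)$. Since $(q-1)/q<1$ for every admissible $q$, Jensen is legitimate. With $\varPhi=\varPhi(x_0,2r,Q)$ it yields $\fint_{B_r}H'(x^-,|Du|)\,dx\le c\,H'(x^-,|Q|)(1+\varPhi)^{\frac{q-1}{q}}\le c\,H'(x^-,|Q|)(1+\varPhi)^{\frac{q-1}{p}}$. This variant even avoids \eqref{eq:H'oderintegrability} for this term.
\end{enumerate}
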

\begin{proof}
    In this proof, we omit the dependence on $x_0$ in the notation and denote $H_\rho^\pm:=H_{B_\rho(x_0)}^\pm$ and $x_\rho^\pm:=x_{x_0,\rho}^\pm$ for $\rho>0$ (see \eqref{eq:frozenH} and \eqref{eq:pm}). From the definition of $\mathcal{A}(Q)$ in \eqref{A}, we have
    \begin{equation*}
    \begin{aligned}
        &H''(x_{2r}^-,|Q|)\fint_{B_r}\langle\, \mathcal{A}(Q)(Du-Q)\, |\, D\zeta\, \rangle\, dx=\fint_{B_r}\langle\,  \partial^2 f(x_{2r}^-,Q)(Du-Q)\, |\, D\zeta\, \rangle\, dx\\
        &=\fint_{B_r}\int_0^1\langle\,  \big(\partial^2 f(x_{2r}^-,Q)-\partial^2 f(x_{2r}^-,Q+t(Du-Q))\big)(Du-Q)\, |\, D\zeta\, \rangle\, dt\, dx\\
        &\qquad+\fint_{B_r}\int_0^1\langle\,  \partial^2 f(x_{2r}^-,Q+t(Du-Q))(Du-Q)\, |\, D\zeta\, \rangle\, dt\, dx\\
        &=\underbrace{\fint_{B_r}\chi_{F_1}\int_0^1\langle\,  \big(\partial^2 f(x_{2r}^-,Q)-\partial^2 f(x_{2r}^-,Q+t(Du-Q))\big)(Du-Q)\, |\, D\zeta\, \rangle\, dt\, dx}_{=:J_1}\\
        &\qquad+ \underbrace{\fint_{B_r}\chi_{F_2}\int_0^1\langle\,  \big(\partial^2 f(x_{2r}^-,Q)-\partial^2 f(x_{2r}^-,Q+t(Du-Q))\big)(Du-Q)\, |\, D\zeta\, \rangle\, dt\, dx}_{=:J_2}\\
        &\qquad+\underbrace{\fint_{B_r}\int_0^1\langle\,  \partial^2 f(x_{2r}^-,Q+t(Du-Q))(Du-Q)\, |\, D\zeta\, \rangle\, dt\, dx}_{=:J_3},
    \end{aligned}
    \end{equation*}
    where $F_1:=\{|Du-Q|\geq\frac{1}{2}|Q|\}\cap B_r$ and  $F_2:=B_r\setminus F_1$.
    Using \eqref{C3}, Lemma \ref{lem:phi''line} and \eqref{eq:Neqv}, we find that
    \begin{equation*}
    \begin{aligned}
        |J_{1}|&\leq c\fint_{B_r}\chi_{F_1}\int_0^1H''(x_{2r}^-,|Q|)+H''(x_{2r}^-,|Q+t(Du-Q)|)\, dt\ |Du-Q|\, dx\\
        &\leq c\fint_{B_r}\chi_{F_1}\big(H''(x_{2r}^-,|Q|)+H''(x_{2r}^-,|Q|+|Du-Q|)\big)|Du-Q|\, dx\\
        &\leq c\fint_{B_r}\chi_{F_1}H'(x_{2r},|Q|+|Du|)\frac{|Du-Q|}{|Q|}\, dx.
    \end{aligned}
    \end{equation*}
Since we have that $3|Du-Q|\geq|Du-Q|+|Q|$ in $F_1$, by \eqref{eq:shifteqv},
    \begin{equation*}
        H'(x_{2r}^-,|Q|+|Du|)\leq 3\frac{H'(x_{2r}^-,2|Q|+|Du-Q|)}{|Q|+|Du-Q|}|Du-Q|\leq c(H_{|Q|})'(x_{2r}^-,|Du-Q|).
    \end{equation*}
    Thus from \eqref{meanexcess} and \eqref{Phi},
    \begin{equation*}
    \begin{aligned}
        |J_{1}|&\leq\frac{c}{|Q|}\fint_{B_r}\chi_{F_1} (H_{|Q|})'(x_{2r}^-,|Du-Q|)|Du-Q|\, dx\\
        &\leq c\frac{1}{|Q|}E(2r,Q)\leq c|Q|H''(x_{2r}^-,|Q|)\varPhi(2r,Q).
    \end{aligned}
    \end{equation*}

We next  consider $J_2$.   In $F_2$, by \eqref{C6}, 
    \begin{equation*}
        |\partial^2 f(x_{2r}^-,Q)-\partial^2 f(x_{2r}^-,Q+t(Du-Q))|\leq cH''(x_{2r}^-,|Q|)\left(\frac{|Du-Q|}{|Q|}\right)^{\beta_0},
    \end{equation*}
    for every $t\in(0,1]$, so that
    \begin{equation*}
        |J_{2}|\leq c|Q|H''(x_{2r}^-,|Q|)\fint_{B_{r}}\chi_{F_2}\left(\frac{|Du-Q|}{|Q|}\right)^{\beta_0+1}\, dx.
    \end{equation*}
    Moreover, since $|Q|+|Du-Q|\leq\frac{3}{2}|Q|$ in $F_2$,
    \begin{equation*}
        \begin{aligned}
            \frac{|Du-Q|^2}{|Q|^2}=\frac{H'(x_{2r}^-,|Q|)|Du-Q|^2}{H'(x_{2r}^-,|Q|)|Q|^2}&\leq c\frac{H'(x_{2r}^-,|Q|+|Du-Q|)|Du-Q|^2}{H(x_{2r}^-,|Q|)(|Q|+|Du-Q|)}\\
            &\leq\frac{c}{H(x_{2r}^-,|Q|)}H_{|Q|}(x_{2r}^-,|Du-Q|).
        \end{aligned}
    \end{equation*}
    Using Jensen's inequality with the fact that $\frac{1+\beta_0}{2}<1$,
    \begin{equation*}
    \begin{aligned}
        |J_{2}|&\leq c|Q|H''(x_{2r}^-,|Q|)\fint_{B_{r}}\left(\frac{H_{|Q|}(x_{2r}^-,|Du-Q|)}{H(x_{2r}^-,|Q|)}\right)^{\frac{1+\beta_0}{2}}\, dx\\
        &\leq c|Q|H''(x_{2r}^-,|Q|)\varPhi(2r,Q)^{\frac{1+\beta_0}{2}}.
    \end{aligned}
    \end{equation*}
  
 We finally consider $J_3$.  Using the facts that $\int_{B_r}\langle\,  \partial f(x_{2r}^-,Q)\, |\, D\zeta\, \rangle\, dx=0$ and\\
 $\int_{B_r}\langle\,  \partial f(x,Du)\, |\, D\zeta\, \rangle\, dx=0$ (this is the weak formulation of the Euler-Lagrange system of the functional \eqref{F})  and applying \eqref{eq:(A4.5)}, \eqref{eq:H'oderintegrability} and \eqref{eq:qintegrabilityq-1}, we obtain
    \begin{equation*}
        \begin{aligned}
            |J_3|&=\left|\fint_{B_r}\langle\,  \partial f(x_{2r}^-,Du)-\partial f(x,Du)\, |\, D\zeta\, \rangle\, dx\right|\\
            &\leq c \left(r^\frac{\beta_0}{2}+r^\frac{\alpha}{2}\right)\fint_{B_r}H'(x,|Du|)\, dx+\fint_{B_r}|a(x)-a(x_{2r}^-)| |Du|^{q-1}\, dx\\
            &\leq c \left(r^\frac{\beta_0}{2}+r^\frac{\alpha}{2}\right)\fint_{B_r}H'(x_{2r}^-,|Du|)\, dx + c \fint_{B_r}|a(x)-a(x_{2r}^-)||Du|^{q-1}\, dx\\
            &\leq c \left(r^\frac{\beta_0}{2}+r^\frac{\alpha}{2}+r^{\alpha_0}\right) H'\left(x_{2r}^-, \fint_{B_{2r}} |Du|\, dx\right).
        \end{aligned}
    \end{equation*}
    Since the mapping $t\mapsto\frac{H'(x^-_{2r},t)}{t^{p-1}}$ is increasing, recalling the set $F_1=\{|Du-Q|\geq\frac{1}{2}|Q|\}\cap B_r$,
    \begin{equation*}
        \begin{aligned}
            |Du|^p&\leq c\chi_{F_1}|Du-Q|^p+c|Q|^p\\
            &\leq c\chi_{F_1}|Du-Q|^{2}\frac{(|Du-Q|+|Q|)^{p-1}}{|Du-Q|+|Q|}\cdot\frac{|Q|^{p-1}}{|Q|^{p-1}}+c|Q|^p\\
            &\leq c\chi_{F_1}|Du-Q|^{2}\frac{H'(x_{2r}^-,|Du-Q|+|Q|)}{|Du-Q|+|Q|}\cdot\frac{|Q|^{p-1}}{H'(x_{2r}^-,|Q|)}+c|Q|^p\\
            &\leq c \left(\chi_{F_1}H_{|Q|}(x_{2r}^-,|Du-Q|)\cdot\frac{1}{H(x_{2r}^-,|Q|)}+1\right)|Q|^p,
        \end{aligned}
    \end{equation*}
    whence with (\ref{eq:H'oderintegrability}) and H\"older's inequality,
    \begin{equation*}
        \begin{aligned}
            |J_3|&\leq c r^{\frac{\alpha_1}{2}}H'\left(x_{2r}^-,\left(\fint_{B_{2r}}|Du|^p\, dx\right)^\frac{1}{p}\right)\\
            &\leq c r^{\frac{\alpha_1}{2}} H'\left(x_{2r}^-,|Q|\left(\frac{1}{H(x_{2r}^-,|Q|)}\fint_{B_2r}H_{|Q|}(x_{2r}^-,|Du-Q|)\, dx+1\right)^{\frac{1}{p}}\right)\\
            &\leq c r^{\frac{\alpha_1}{2}} H'(x_{2r}^-,|Q|)\big(\varPhi(2r,Q)+1\big)^\frac{q-1}{p}\\
            &\leq c r^{\frac{\alpha_1}{2}} H''(x_{2r}^-,|Q|)|Q|\big(\varPhi(2r,Q)+1\big)^\frac{q-1}{p}.
        \end{aligned}
    \end{equation*}
  Collecting the above estimates completes the proof.
  \end{proof}

The previous lemma, together with the higher integrability result in Lemma \ref{lem:Higherintegrability3}, allows us to apply Lemma \ref{Aharmonicapproximation}.
    Define
    \begin{equation}\label{alpha3}
        \alpha_3:=\min\left\{\frac{\alpha_1}{2},\alpha_2,\frac{\beta_0q}{q-1}\right\}=\min\left\{\frac{\alpha}{2},\frac{\alpha_0}{2}, \frac{\beta_0}{2}, \frac{\beta_0q}{q-1},\alpha-\frac{(q-p)n}{p(1+\sigma_0)}\right\},
    \end{equation}
    where $\alpha_1$ and $\alpha_2$ are from \eqref{alpha1} and \eqref{alpha2}, respectively.
We then set
\begin{equation}\label{excess*}
    \begin{aligned}
        E_*(x_0,\rho)&:=E(x_0,\rho)+\rho^{\frac{\alpha_3}{2}}H_{B_\rho(x_0)}^-(|(Du)_{x_0,\rho}|)\\
        &=H_{B_\rho(x_0)}^-(|(Du)_{x_0,\rho}|)\left(\varPhi(x_0,\rho)+\rho^{\frac{\alpha_3}{2}}\right).
    \end{aligned}
\end{equation} Then we have the excess decay in the nondegenerate regime.

\begin{lem}\label{lem:ndegexcessdecay0}
    For every $\varepsilon>0$, there exist $\delta_1,\delta_2\in(0,1)$ depending on $n,N,p,q,\alpha,[a]_{C^{0,\alpha}},L,\nu,\beta_0$ and $\varepsilon$ such that for any ball $B_{2r}(x_0)\Subset \Omega$ satisfying (\ref{eq:smallness}) with $r\leq\frac{1}{2}$, if
    \begin{equation}\label{eq:smalldecaycondition}
        \begin{aligned}
            \fint_{B_{2r}(x_0)}\left|V_{H_{B_{2r}(x_0)}^-}(Du)-(V_{H_{B_{2r}(x_0)}^-}(Du))_{x_0,2r}\right|^2\, dx\leq \delta_1\fint_{B_{2r}(x_0)}\left|V_{H_{B_{2r}(x_0)}^-}(Du)\right|^2\, dx
        \end{aligned}
    \end{equation}
    and
    \begin{equation}\label{eq:smallradiuscondition}
        r^\frac{\alpha_3}{2}\leq\delta_2,
    \end{equation}
    then we have that for every $\tau\in(0,\frac{1}{4})$,
    \begin{equation*}
        \fint_{B_{\tau r}(x_0)}\left|V_{H_{B_{\tau r}(x_0)}^-}(Du)-(V_{H_{B_{\tau r}(x_0)}^-}(Du))_{x_0,\tau r}\right|^2\, dx\leq c\tau^2\left(1+\frac{\varepsilon}{\tau^{n+2}}\right)E_*(x_0,2r)
    \end{equation*}
    for some $c=c(n,N,p,q,\alpha,[a]_{C^{0,\alpha}},L,\nu,\beta_0)>0$.
\end{lem}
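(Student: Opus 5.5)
We follow the standard $\mathcal{A}$-harmonic approximation scheme, as in \cite{CO20,DLSV12}, with $Q:=(Du)_{x_0,2r}$ and frozen coefficients at $x^-:=x^-_{x_0,2r}$.

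\emph{Step 1: nondegeneracy.} By \eqref{eq:Vmean}, \eqref{eq:Vtoshift} and \eqref{eq:smalldecaycondition} with $\delta_1$ small, the mean of $V_{H^-_{B_{2r}}}(Du)$ is comparable to its $L^2$-norm. Hence $H^-_{B_{2r}}(|Q|)\sim\fint_{B_{2r}}|V_{H^-_{B_{2r}}}(Du)|^2\,dx$ and $\varPhi(x_0,2r)\le c\delta_1$. In particular $Q\neq 0$.

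\emph{Step 2: rescaling.} Set $w:=u-\ell$, with $\ell$ as in Lemma~\ref{lem:Caccioppoli2}, and work with the N-function $\varphi:=(H^-_{B_{2r}})_{|Q|}$, whose $\Delta_2$-constants are uniform in $|Q|$. Choose $\mu$ with $\varphi(\mu)=E_*(x_0,2r)$. Lemma~\ref{lem:Higherintegrability3} together with \eqref{meanexcess} gives the reverse H\"older hypothesis of Lemma~\ref{Aharmonicapproximation} on $B_r$, since the $r^{\alpha_2}H^-(|Q|)$ term is absorbed into $E_*$ as $\alpha_3/2\le\alpha_2$. To pass from $H_{|Q|}(x,\cdot)$ to the frozen $\varphi$, use $\varphi\le H_{|Q|}(x,\cdot)$.

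\emph{Step 3: almost $\mathcal{A}$-harmonicity.} By \eqref{eq:Aapproximation}, with $\varPhi\le c\delta_1$ and $r^{\alpha_1/2}\le r^{\alpha_3}\le\delta_2^2$, the right-hand side is at most
\[
c\big(\varPhi+\varPhi^{\frac{1+\beta_0}{2}}+r^{\alpha_1/2}\big)|Q|.
\]
The key observation is that $\varphi(\mu)=H^-(|Q|)(\varPhi+r^{\alpha_3/2})$, so $\mu\sim|Q|\,(\varPhi+r^{\alpha_3/2})^{1/2}$ when $\varPhi$ is small, by the quadratic behaviour of the shifted function below $|Q|$. Each term $\varPhi$, $\varPhi^{(1+\beta_0)/2}$, $r^{\alpha_1/2}$ is then $\le(\delta_1^{1/2}+\delta_1^{\beta_0/2}+\delta_2)\,(\varPhi+r^{\alpha_3/2})^{1/2}$. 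This is where the choice of $\alpha_3$ enters. Thus $w$ is $\delta\mu$-almost $\mathcal{A}(Q)$-harmonic once $\delta_1,\delta_2$ are small relative to the $\delta(\varepsilon)$ of Lemma~\ref{Aharmonicapproximation}.

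\emph{Step 4: comparison and decay.} Lemma~\ref{Aharmonicapproximation} yields $h$ with $\fint_{B_r}\varphi(|w-h|/r)\,dx\le\varepsilon\varphi(\mu)$. Linear Legendre--Hadamard theory gives
\[
\sup_{B_{r/2}}|D^2h|\le \frac{c}{r}\fint_{B_r}|Dh|\,dx,
\]
so
\[
\fint_{B_{2\tau r}}\varphi\Big(\frac{|h-h(x_0)-Dh(x_0)(x-x_0)|}{\tau r}\Big)dx\le c\tau^2\varphi(\mu),
\]
using the quadratic regime of $\varphi$ and Jensen. Combining both estimates on $B_{2\tau r}$ gives a bound $c\tau^2(1+\varepsilon\tau^{-n-2})\varphi(\mu)$ for the $\varphi$-distance of $w$ to an affine map. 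Apply Lemma~\ref{lem:Caccioppoli2} on $B_{2\tau r}$ with $Q':=Q+Dh(x_0)$, noting $|Q'|\sim|Q|$. Its error term $c(r^{\beta_1}+r^\alpha|Q|^{q-p})H^-(|Q|)$ is bounded by $c(\tau r)^{\alpha_3/2}$-type terms, hence by $E_*$ after adjusting $\tau$-powers. This bounds $E(x_0,\tau r,Q')$, and \eqref{meanexcess} turns it into the mean oscillation with $H^-_{B_{\tau r}}$.

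\emph{Main obstacle.} The delicate point is changing from the frozen function $H^-_{B_{2r}}$, and from $H_{|Q|}(x,\cdot)$ in the Caccioppoli inequality, to $H^-_{B_{\tau r}}$. I would use $a^-_{2r}\le a^-_{\tau r}\le a^-_{2r}+c r^\alpha$. The excess term $(a-a^-)|Du|^q$ is controlled by Lemma~\ref{lem:qintegrability}, and the remaining $r^\alpha|Q|^{q-p}H^-(|Q|)$ by $r^{\alpha_3/2}H^-(|Q|)\le E_*$. I expect a loss of a power of $\tau^{-n}$ here, which must be absorbed into the $\varepsilon\tau^{-n-2}$ term.
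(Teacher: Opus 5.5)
Your Steps 1--3 match the paper: nondegeneracy from \eqref{eq:smalldecaycondition}, the reverse H\"older hypothesis from Lemma~\ref{lem:Higherintegrability3}, and almost $\mathcal A(Q)$-harmonicity with $\mu\sim|Q|\,\big(E_*(x_0,2r)/H^-_{B_{2r}(x_0)}(|Q|)\big)^{1/2}$.

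\textbf{The gap is in Step 4.} There you use only the zero-order part of \eqref{eq:Aapproximatesense}, followed by a Caccioppoli inequality on $B_{2\tau r}$. This does not give the stated bound, for two reasons.

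First, the approximation lemma controls $|w-h|/r$, but Caccioppoli at scale $\tau r$ needs $|w-h|/(\tau r)$. Hence
\[
\fint_{B_{2\tau r}}\varphi\Big(\frac{|w-h|}{\tau r}\Big)\,dx\le (2\tau)^{-n}\fint_{B_r}\varphi\Big(\frac{1}{\tau}\,\frac{|w-h|}{r}\Big)\,dx\le c\,\tau^{-n-\max\{q,2\}}\,\varepsilon\,\varphi(\mu).
\]
The indices of $\varphi=(H^-_{B_{2r}(x_0)})_{|Q|}$ lie in $[\min\{p,2\},\max\{q,2\}]$. So even at best the rescaling costs an extra factor $\tau^{-\min\{p,2\}}$. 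You therefore get $c\tau^2+c\varepsilon\tau^{-n-\max\{q,2\}}$, not the claimed $c\tau^2(1+\varepsilon\tau^{-n-2})=c\tau^2+c\varepsilon\tau^{-n}$. Since $\delta_1,\delta_2$ must be independent of $\tau\in(0,\frac14)$, renaming $\varepsilon$ does not repair this. The weaker bound would be harmless in Lemma~\ref{lem:iteration}, where $\tau$ is fixed before $\varepsilon$, but it is not the lemma as stated.

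Second, the right-hand side of Lemma~\ref{lem:Caccioppoli2} contains the non-frozen $H_{|Q'|}(x,|u-\ell'|/(\tau r))$. Its part $(a(x)-a^-_{x_0,2r})\,|u-\ell'|^q/(\tau r)^q$ is not controlled by:
\begin{itemize}
\item the frozen estimate, whose $q$-weight is $a^-_{x_0,2r}$ and may be $0$;
\item Lemma~\ref{lem:qintegrability}, which only handles $(a(x)-a^-)|Du|^q$, not zero-order quantities.
\end{itemize}
Your ``main obstacle'' paragraph addresses only the gradient-type term.

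\textbf{The fix is to stay at the gradient level, which is what the paper does.} Lemma~\ref{Aharmonicapproximation} also gives $\fint_{B_r}\varphi(|Du-Q-Dh|)\,dx\le\varepsilon\varphi(\mu)$, and restricting this to $B_{\tau r}$ costs only $\tau^{-n}$.

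You also need $\sup_{B_{r/2}}|Dh|\le\frac12|Q|$. This follows from the Calder\'on--Zygmund estimate, the reverse H\"older bound and $\mu\ll|Q|$; you assert $|Q'|\sim|Q|$ without it. With this in hand, the paper bounds the excess on $B_{\tau r}$ by $\fint_{B_{\tau r}}H_{|Q|}(x^-_{x_0,\tau r},|Du-Q-(Dh)_{x_0,\tau r}|)\,dx$ and splits it into three pieces:
\begin{itemize}
\item The change of frozen point from $x^-_{x_0,\tau r}$ to $x^-_{x_0,2r}$ in $|Du-Q-Dh|$. Via Lemma~\ref{lem:qintegrability} and $|Dh|\le\frac12|Q|$, this is at most $c\tau^{-n}r^{\alpha_3}H^-_{B_{2r}(x_0)}(|Q|)\le c\varepsilon\tau^{-n}E_*(x_0,2r)$ once $\delta_2\le\varepsilon$.
\item The gradient comparison $\fint_{B_{\tau r}}H_{|Q|}(x^-_{x_0,2r},|Du-Q-Dh|)\,dx\le c\varepsilon\tau^{-n}E_*(x_0,2r)$.
\item The oscillation $\fint_{B_{\tau r}}H_{|Q|}(x^-_{x_0,\tau r},|Dh-(Dh)_{x_0,\tau r}|)\,dx\le c\tau^2E_*(x_0,2r)$. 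This comes from the interior $D^2h$ bound and the quadratic behaviour of the shifted function below $|Q|$.
\end{itemize}
No Caccioppoli inequality or Taylor expansion of $h$ is needed, and both defects above disappear.
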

    \begin{proof}
        We omit the dependence on $x_0$ from notation, and write $H_\rho^\pm:=H_{B_\rho}^\pm$ and $V_\rho^\pm:=V_{H_\rho^\pm}$ for $\rho>0$. Set $Q:=(Du)_{2r}$.
       First, by (\ref{eq:smalldecaycondition}), together with  \eqref{eq:Vmean}  and  (\ref{Phi}), and by choosing $\delta_1\in(0,1)$ small enough, we obtain
        \begin{equation*}
            \begin{aligned}
                \fint_{B_{2r}}|V_{2r}^-(Du)|^2\, dx&\leq 2\fint_{B_{2r}}|V_{2r}^-(Du)-V_{2r}^-(Q)|^2\, dx+2|V_{2r}^-(Q)|^2\\
                &\leq c \fint_{B_{2r}}|V_{2r}^-(Du)-\left(V_{2r}^-(Du)\right)_{2r}|^2\, dx+2|V_{2r}^-(Q)|^2\\
                &\leq c\delta_1\fint_{B_{2r}}|V_{2r}^-(Du)|^2\, dx+2|V_{2r}^-(Q)|^2\\
                &\leq \frac12 \fint_{B_{2r}}|V_{2r}^-(Du)|^2\, dx+2|V_{2r}^-(Q)|^2,
            \end{aligned}
        \end{equation*}
which, together with (\ref{eq:Vsquare}), implies 
       \begin{equation}\label{eq:ugradientestimate}
            \fint_{B_{2r}}H_{2r}^-(|Du|)\, dx\leq cH_{2r}^-(|Q|),
        \end{equation}   
        and
        \begin{equation}\label{Phidelta1}
        \varPhi(2r)\le c\delta_1 \le \frac12.
        \end{equation}    
              Moreover, (\ref{eq:Aapproximation}),  \eqref{Phidelta1}, \eqref{excess*}, and  \eqref{eq:smallradiuscondition} imply that
        \begin{equation}\label{eq:smallapproximation}
            \begin{aligned}
                &\bigg|\fint_{B_r}\langle\, \mathcal{A}(Q)(Du-Q)\, |\, D\zeta\, \rangle\, dx\bigg|\\
                &\leq c\left(\varPhi(2r)+\varPhi(2r)^\frac{1+\beta_0}{2}+r^{\alpha_3}\big(1+\varPhi(2r)\big)^\frac{q-1}{p}\right)|Q|\\
                &\leq c\left(\varPhi(2r)^\frac{1}{2}+\varPhi(2r)^\frac{\beta_0}{2}+r^\frac{\alpha_3}{2}\right)\left(\varPhi(2r)+r^{\alpha_3} \right)^\frac{1}{2}|Q|\\
                &\leq \Tilde{c}_1\left(\delta_1^\frac{1}{2}+\delta_1^\frac{\beta_0}{2}+\delta_2\right)\left(\frac{E_*(2r)}{H_{2r}^-(|Q|)}\right)^\frac{1}{2}|Q|
            \end{aligned}
        \end{equation}
        for all $\zeta\in W_0^{1,\infty}(B_r,\R^N)$ with $\|D\zeta\|_{L^\infty(B_r)}\le 1$ with some $\Tilde{c}_1\ge1$.        
        
        Define an N-function $\Psi$ by
        \begin{equation*}
            \Psi(t):=\frac{H_{|Q|}(x_{2r}^-,t)}{H_{2r}^-(|Q|)}\sim\frac{H_{2r}^-(|Q|+t)}{H_{2r}^-(|Q|)}\frac{t^2}{(|Q|+t)^2},\quad t\geq0.
        \end{equation*}
        We note that for $t\in[0,|Q|]$,
        \begin{equation*}
            \left(\frac{t}{|Q|}\right)^2\leq \frac{4}{(|Q|+t)^2}t^2\leq 4\frac{H_{2r}^-(|Q|+t)}{H_{2r}^-(|Q|)(|Q|+t)^2}t^2\leq \Tilde{c}_2\Psi(t)
        \end{equation*}
        for some $\Tilde{c}_2\ge 1$, and that by Lemma \ref{lem:Higherintegrability3}, (\ref{meanexcess}) and (\ref{excess*}),
        \begin{equation*}
        \begin{aligned}
            &\left(\fint_{B_r}\Psi(|Du-Q|)^{1+\sigma_1}\, dx\right)^\frac{1}{1+\sigma_1}\\
            &=\frac{1}{H_{2r}^-(|Q|)}\left(\fint_{B_{r}}H_{|Q|}(x_{2r}^-,|Du-Q|)^{1+\sigma_1}\, dx\right)^\frac{1}{1+\sigma_1}\\
            &\leq \frac{c}{H_{2r}^-(|Q|)}\left(\fint_{B_{2r}}H_{|Q|}(x_{2r}^-,|Du-Q|)\, dx+r^{\alpha_2}H_{2r}^-(|Q|)\right)\leq \Tilde{c}_3\frac{E_*(2r)}{H_{2r}^-(|Q|)}
        \end{aligned}
        \end{equation*}
        for some constant $\Tilde{c}_3\ge 1$.
        Then we define
        \begin{equation}\label{def:mu}
            \mu:=\max\left\{\Tilde{c}_1 ,\sqrt{\Tilde{c}_2\Tilde{c}_3(2\Tilde{c}_4)^{\Tilde{q}}}\right\}\left(\frac{E_*(2r)}{H_{2r}^-(|Q|)}\right)^\frac{1}{2}|Q|,
        \end{equation}
       where $\tilde{c}_4\ge 1$ is a constant depending only on $n$, $N$, $p$ and $q$, to be determined later, and $\tilde{q}:=\max\{q,2\}$.
       Then, since $\frac{E_*(2r)}{H_{2r}^-(|Q|)} \lesssim  \delta_1+\delta_2$ by (\ref{eq:smalldecaycondition}) and (\ref{eq:smallradiuscondition}), by choosing $\delta_1$ and $\delta_2$ sufficiently small, we obtain that
        \begin{equation}\label{eq:smallmu}
            \mu \leq|Q|
        \end{equation}
 and hence
        \begin{equation}\label{eq:zetaHigherintegrability}
            \left(\fint_{B_r}\Psi(|Du-Q|)^{1+\sigma_1}\, dx\right)^\frac{1}{1+\sigma_1}\leq\frac{1}{\Tilde{c}_2(2\Tilde{c}_4)^{\Tilde{q}}}\left(\frac{\mu}{|Q|}\right)^2\leq\frac{1}{(2\Tilde{c}_4)^{\Tilde{q}}}\Psi(\mu).
        \end{equation}
        We then determine $\delta>0$ from Lemma \ref{Aharmonicapproximation} with given $\varepsilon$ and $\Psi=\varphi$, and choose $\delta_1$ and $\delta_2$ small enough to satisfy
        \begin{equation*}
            \delta_1^\frac{1}{2}+\delta_1^\frac{\beta_0}{2}+\delta_2\leq\delta
        \end{equation*}
        which applies to (\ref{eq:smallapproximation}) as
        \begin{equation*}
            \fint_{B_r}\langle\, \mathcal{A}(Q)(Du-Q)\, |\, D\zeta\, \rangle\, dx\leq\frac{\Tilde{c}_1(\delta_1^\frac{1}{2}+\delta_1^\frac{\beta_0}{2}+\delta_2)}{\max\left\{\Tilde{c}_1,\sqrt{\Tilde{c}_2\Tilde{c}_3(2\Tilde{c}_4)^q}\right\}}\mu \leq \delta\mu .
        \end{equation*}
        Define the affine function $\ell(x):=(Du)_{2r}(x-x_0)+ (u)_{2r}$. Then we may apply Lemma \ref{Aharmonicapproximation} with $(w,\phi,\sigma)$ replaced by $(u-\ell,\Psi,\sigma_1)$ to obtain
        \begin{equation*}
            \frac{1}{H_{2r}^-(|Q|)}\fint_{Br}H_{|Q|}(x_{2r}^-,|Du-Q-Dh|)\, dx\leq \varepsilon\Psi(\mu),
        \end{equation*}
        where $h$ is the $\mathcal{A}$-harmonic function in $B_r$ with $h=u-\ell $ on $\partial B_r$. Moreover, by (\ref{eq:smallmu}) and \eqref{def:mu},
        \begin{equation*}
            \Psi(\mu)\leq c\frac{H_{2r}^-(|Q|+\mu)}{H_{2r}^-(|Q|)}\frac{\mu^2}{(|Q|+\mu)^2}\leq c\frac{H_{2r}^-(2|Q|)}{H_{2r}^-(|Q|)}\left(\frac{\mu}{|Q|}\right)^2\leq c\left(\frac{\mu}{|Q|}\right)^2\leq c \frac{E_*(2r)}{H_{2r}^-(|Q|)},
        \end{equation*}
        and hence
        \begin{equation}\label{eq:shiftharmonicbound}
            \fint_{B_r}H_{|Q|}(x_{2r}^-,|Du-Q-Dh|)\, dx\leq c \varepsilon E_*(2r).
        \end{equation}
        
        Note that by \cite{Giusti}, $h$ is smooth in the interior of $B_r$ and satisfies, in particular, the following estimates:
        \begin{equation}\label{eq:Aharmonicgradient1}
         r\sup_{B_{\frac{r}{4}}}|D^2h| \leq c \sup_{B_{\frac{r}{2}}}|Dh| \leq c\fint_{B_r}|Dh|\, dx
        \end{equation}
        and Calder\'on-Zygmund type estimate
        \begin{equation}\label{eq:Aharmonicgradient2}
            \fint_{B_r}\Psi(|Dh|)\, dx\leq c\fint_{B_r}\Psi(|Du-Q|)\, dx,
        \end{equation}
        which, with Jensen's inequality, give
        \begin{equation*}
             \sup_{B_\frac{r}{2}}|Dh|\leq c\Psi^{-1}\left(\fint_{B_r}\Psi(|Dh|)\, dx\right)\leq\Tilde{c}_4\Psi^{-1}\left(\fint_{B_r}\Psi(|Du-Q|)\, dx\right)
        \end{equation*}
        for some $\Tilde{c}_4=\Tilde{c}_4(n,N,p,q)\geq1$.  Then by (\ref{eq:zetaHigherintegrability}) and (\ref{eq:smallmu}), we have
        \begin{equation}\label{eq:Aharmonicgradient3}
            \sup_{B_\frac{r}{2}}|Dh|\leq \Tilde{c}_5\Psi^{-1}\left(\frac{1}{(2\Tilde{c}_5)^{\Tilde{q}}}\Psi(\mu)\right)\leq \Tilde{c}_5\Psi^{-1}\left(\Psi\left(\frac{1}{2\Tilde{c}_5}\mu\right)\right)\leq\frac{1}{2}|Q|.
        \end{equation}
        
        Fix $\tau\in(0,\frac{1}{4})$. Then by (\ref{eq:Aharmonicgradient3}), and $\frac{1}{2}|Q|\leq|Q+(Dh)_{\tau r}|\leq\frac{3}{2}|Q|$ so that with (\ref{eq:Vtoshift}),
        \begin{equation*}
            \begin{aligned}
                &\fint_{B_{\tau r}}\left|V_{\tau r}^-(Du)-\left(V_{\tau r}^-(Du)\right)_{\tau r}\right|^2\, dx\\
                &\leq c\fint_{B_{\tau r}}\left|V_{\tau r}^-(Du)-V_{\tau r}^-(Q+(Dh)_{\tau r})\right|^2\, dx\\
                &\leq c\fint_{B_{\tau r}}H_{|Q+(Dh)_{\tau r}|}(x_{\tau r}^-,|Du-Q-(Dh)_{\tau r}|)\, dx\\
                &\leq c\fint_{B_{\tau r}}H_{|Q|}(x_{\tau r}^-,|Du-Q-(Dh)_{\tau r}|)\, dx\\
                &\leq c\underbrace{\fint_{B_{\tau r}}H_{|Q|}(x_{\tau r}^-,|Du-Q-Dh|)-H_{|Q|}(x_{2 r}^-,|Du-Q-Dh|)\, dx}_{=:K_1}\\
                &\quad+c\underbrace{\fint_{B_{\tau r}}H_{|Q|}(x_{2 r}^-,|Du-Q-Dh|)\, dx}_{=:K_2}+c\underbrace{\fint_{B_{\tau r}}H_{|Q|}(x_{\tau r}^-,|Dh-(Dh)_{\tau r}|)\, dx}_{=:K_3}.
            \end{aligned}
        \end{equation*}
        For $K_1$, we use \eqref{eq:Aharmonicgradient3}, \eqref{eq:qintegrabilityq},  (\ref{eq:ugradientestimate}), \eqref{eq:smallradiuscondition} by choosing $\delta_2\leq\varepsilon$, and \eqref{excess*} to find
        \begin{equation*}
            \begin{aligned}
                K_1&\leq c\fint_{B_{\tau r}}(a(x)-a_{2r}^-)(|Du|^q+|Q|^q+|Dh|^q)\, dx\\
                &\leq c\tau^{-n}\fint_{B_{r}}(a(x)-a_{2r}^-)|Du|^q\, dx+cr^\alpha|Q|^q\\
                &\leq cr^{\alpha_0}\tau^{-n}(H_{2r}^-(|Du|))_{2r}+cr^\alpha(|Du|^{p(1+{\sigma_0})})_{2r}^{\frac{q-p}{p(1+\sigma_0)}}|Q|^p\\
                &\leq cr^{\alpha_0}\tau^{-n}(H_{2r}^-(|Du|))_{2r}+r^{\alpha_2}H_{2r}^-(|Q|)\\
                &\leq cr^{\alpha_3}\tau^{-n}H_{2r}^-(|Q|)=cr^\frac{\alpha_3}{2}\tau^{-n}H_{2r}^-(|Q|)r^\frac{\alpha_3}{2}\\
                &\leq c\varepsilon\tau^{-n}E_*(2r),
            \end{aligned}
        \end{equation*}
        where we also used (\ref{eq:smallness}) to obtain the exponent $\alpha_2$. 
        For $K_2$, we have from  (\ref{eq:shiftharmonicbound}) that
        \begin{equation*}
            K_2\leq c\varepsilon\tau^{-n}E_*(2r).
        \end{equation*} 
        Finally, to estimate $K_3$, we use (\ref{eq:Aharmonicgradient1}) and (\ref{eq:Aharmonicgradient2}) 
with $\Psi$ replaced by the function $t \mapsto t^{\tilde p}$ with $\tilde p := \min\{p,2\}$, to obtain
\begin{equation*}
    \begin{aligned}
        K_3
        &\le c H_{|Q|}(x_{\tau r}^-,\, \tau r \sup_{B_{r/4}} |D^2 h|) \\
        &\le c H_{|Q|}(x_{\tau r}^-,\, \tau  \sup_{B_{r/2}} |D h|) \\
        &\le c \tau^2 H_{|Q|}(x_{2r}^+,\,  \sup_{B_{r/2}} |D h|) \\
        &\le c \tau^2 H_{|Q|}(x_{2r}^+,\, (|Dh|^{\tilde p})_r^{1/\tilde p}) \\
        &\le c \tau^2 H_{|Q|}(x_{2r}^+,\, (|Du - Q|^{\tilde p})_r^{1/\tilde p}),
    \end{aligned}
\end{equation*}
where the third inequality follows from \eqref{eq:Aharmonicgradient3} and the fact that, for $0 < t \le |Q|$ and $\tau \in (0,1)$,
\begin{equation*}
    H_{|Q|}(x_{2r}^+,\, \tau t)
    \sim
    \frac{H'(x_{2r}^+,\, |Q| + \tau t)}{|Q| + \tau t} (\tau t)^2
    \sim
    \tau^2 \frac{H'(x_{2r}^+,\, |Q| + t)}{|Q| + t} t^2
    \sim
    \tau^2 H_{|Q|}(x_{2r}^+,\, t).
\end{equation*}
       Moreover, by applying Jensen's inequality to the convex function  $t\mapsto H_{|Q|}(x_{2 r}^-,t^\frac{1}{\Tilde{p}})$, \eqref{eq:Hoderintegrability} and \eqref{eq:smallness}, and recalling $Q=(Du)_{2r}$, \eqref{meanexcess} and \eqref{excess*}, we have
        \begin{equation*}
        \begin{aligned}
            K_3&\leq c\tau^2\left(H_{|Q|}\left(x_{2 r}^-,(|Du-Q|^{\Tilde{p}})_r^\frac{1}{\Tilde{p}}\right)+r^\alpha\left((|Du-Q|^{\Tilde{p}})_{2r}^\frac{1}{\Tilde{p}}+|Q|\right)^q\right)\\
            &\leq c\tau^2\left(\fint_{B_{2r}}H_{|Q|}(x_{2 r}^-,|Du-Q|)\, dx+r^\alpha\left((|Du|^{\Tilde{p}})_{2r}^\frac{1}{\Tilde{p}}+|Q|\right)^q\right)\\
            &\leq c\tau^2\left(E(2r)+r^\alpha|Q|^q\right)\\
            &\leq c\tau^2\left(E(2r)+r^\alpha|Q|^{q-p}H(x_{2r}^-,|Q|)\right)\\
            &\leq c\tau^2\left(E(2r)+r^\alpha\left((|Du|^{p(1+\sigma_0)})_{2r}\right)^\frac{q-p}{p(1+\sigma_0)}H(x_{2r}^-,|Q|)\right)\\
            &\leq c\tau^2\left(E(2r)+r^{\alpha_3}H(x_{2r}^-,|Q|)\right)\\
            &\leq c\tau^2E_*(2r).
        \end{aligned}
        \end{equation*}
            
        Consequently, collecting the estimates for $K_1$, $K_2$, and $K_3$, we have the assertion.
    \end{proof}

\section{Degenerate Regime}

Let $u\in W^{1,p}(\Omega,\mathbb{R}^N)$ with $H(\cdot,|Du|)\in L^1(\Omega)$ be a minimizer of (\ref{F}), where  the function $H$ in \eqref{H} satisfies \eqref{a} and  the function $f(x,P)$ in \eqref{F} satisfies \eqref{C1}--\eqref{C5} and \eqref{C7}. In this subsection, we will consider the degenerate regime \eqref{eq:degeneratedecaay}.
To handle the degenerate regime, we first establish the almost minimizing estimate in Lemma~\ref{lem:minimizingapproximation} with $w=u$ and $\phi=H^-_{B_{2r}(x_0)}$. Then the comparison estimate in Lemma~\ref{lem:minimizingapproximation}  allows us to derive the excess decay.

\begin{lem}\label{lem:almostminimizer} 
There exists $c=c(n,N,p,q,\alpha,[a]_{C^{0,\alpha}},L,\nu,\beta_0)>0$ such that for any  $\delta>0$ and $B_{2r}(x_0)\Subset\Omega$ with (\ref{eq:smallness}) and $r\leq\frac{1}{2}$,
\begin{equation}\label{eq:almostminimizer}
\begin{aligned}
    \fint_{B_r(x_0)}H^-_{B_r(x_0)}&(|Du|)\, dx\leq\fint_{B_r(x_0)}H^-_{B_r(x_0)}(|Du+D\zeta|)\, dx\\
    & +c\left(\frac{(H^-_{B_{2r}(x_0)}(|Du|))_{x_0,2r}^{\sigma_0}}{H^-_{B_r(x_0)}(\kappa)^{\sigma_0}}+r^{\alpha_2}+\delta\right) \left(\frac{\|D\zeta\|_{\infty}}{(H^-_{B_r(x_0)})^{-1}(\mu)}+1\right)^\gamma \mu\\
\end{aligned}
\end{equation}
for all $\zeta\in W_0^{1,\infty}(B_r(x_0))$ with $\|D\zeta\|_\infty:= \|D\zeta\|_{L^\infty(B_{r}(x_0);\R^{N\times n}))}$, where $\sigma_0$ is given in Lemma~\eqref{lem:Highertintegrability0},
\begin{equation*}
    \mu:= \fint_{B_{2r}(x_0)}H^-_{B_{2r}(x_0)}(|Du|)\, dx ,
\end{equation*}
$\gamma:=q(1+\sigma_0)$, $\alpha_2$ is from \eqref{alpha2}, and $\kappa>0$ is from (\ref{ass:A6}) for given $\delta>0$.
\end{lem}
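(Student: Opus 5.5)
We compare $\fint_{B_r}H^-_{B_r}(|Du|)\,dx$ with $\fint_{B_r}H^-_{B_r}(|Du+D\zeta|)\,dx$ by passing through the functional $\mathcal F$, using minimality of $u$. Write $x^-:=x^-_{x_0,r}$ and $H^-:=H^-_{B_r(x_0)}$. The chain is
\begin{equation*}
\int_{B_r}H^-(|Du|) \to \int_{B_r}[f(x^-,Du)-f(x^-,\mathbf 0)] \to \int_{B_r}[f(x,Du)-f(x,\mathbf 0)] \le \int_{B_r}[f(x,Du+D\zeta)-f(x,\mathbf 0)],
\end{equation*}
and then the same chain is reversed for $Du+D\zeta$. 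The last inequality is exactly minimality. The errors come from two sources:
\begin{itemize}
\item[(i)] replacing $f(x,\cdot)$ by $f(x^-,\cdot)$, which we control by \eqref{C5};
\item[(ii)] replacing $f(x^-,P)-f(x^-,\mathbf 0)$ by $H^-(|P|)$, which we control by \eqref{C7} for small $|P|$ and by \eqref{C2}--\eqref{C3} otherwise.
\end{itemize}

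\textbf{Step (i): freezing $x$.} By \eqref{C5}, both $\fint_{B_r}|f(x,P)-f(x^-,P)|$ and the analogous term at $P=\mathbf 0$ (which vanishes) are bounded by
\begin{equation*}
c\,r^{\beta_0}\fint_{B_r}H(x,|P|)\,dx+c\fint_{B_r}(a(x)-a^-)|P|^q\,dx ,\qquad P\in\{Du,\ Du+D\zeta\}.
\end{equation*}
For $P=Du$ we use \eqref{eq:Hoderintegrability} and \eqref{eq:qintegrabilityq} (after enlarging $B_r$ to $B_{2r}$ and comparing $a^-_{x_0,r}$ with $a^-_{x_0,2r}$ via $r^\alpha$). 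This gives a bound $cr^{\alpha_2}\mu$. For $P=Du+D\zeta$ we split $|Du+D\zeta|^q\lesssim|Du|^q+\|D\zeta\|_\infty^q$. The $\|D\zeta\|_\infty$ part is bounded by $r^{\alpha_2}\,\|D\zeta\|_\infty^{q}$, and this is dominated by $r^{\alpha_2}\bigl(\|D\zeta\|_\infty/(H^-)^{-1}(\mu)+1\bigr)^q\mu$, because $H^-(t)\le \max\{(t/s)^p,(t/s)^q\}H^-(s)$ with $s=(H^-)^{-1}(\mu)$ and $\mu\sim H^-(s)$. The powers of $r$ are absorbed into $r^{\alpha_2}$ since $\alpha_2\le \min\{\alpha_0,\beta_0\}$.

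\textbf{Step (ii): the degenerate comparison via \eqref{C7}.} Split $B_r$ into the small-gradient set $\{|P|<\kappa\}$ and the large-gradient set $\{|P|\ge\kappa\}$.

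On the small set, \eqref{C7} gives
\begin{equation*}
|H^-(|P|)-f(x^-,P)+f(x^-,\mathbf 0)|\le\delta H^-(|P|),
\end{equation*}
which contributes $\delta\fint H^-(|P|)$. For $P=Du$ this is $\lesssim\delta\mu$. For $P=Du+D\zeta$ it is $\lesssim\delta(\|D\zeta\|_\infty/(H^-)^{-1}(\mu)+1)^q\mu$, by the same homogeneity argument as in Step (i).

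On the large set, \eqref{C2}--\eqref{C3} bound the integrand difference by $cH^-(|P|)$.
\begin{itemize}
\item For $P=Du$, Hölder's inequality, Chebyshev's inequality and Lemma~\ref{lem:Highertintegrability0} give
\begin{equation*}
\fint_{\{|Du|\ge\kappa\}}H^-(|Du|)\le\Bigl(\fint H^-(|Du|)^{1+\sigma_0}\Bigr)^{\frac{1}{1+\sigma_0}}\Bigl(\tfrac{|\{|Du|\ge\kappa\}|}{|B_r|}\Bigr)^{\frac{\sigma_0}{1+\sigma_0}}\lesssim \mu\,\frac{\mu^{\sigma_0}}{H^-(\kappa)^{\sigma_0}} .
\end{equation*}
Here Chebyshev is applied in the form $|\{|Du|\ge\kappa\}|\le H^-(\kappa)^{-(1+\sigma_0)}\int H^-(|Du|)^{1+\sigma_0}$, and the higher-integrability estimate is used with \eqref{eq:Hoderintegrability}.
\item For $P=Du+D\zeta$, first note $\{|Du+D\zeta|\ge\kappa\}\subset\{|Du|\ge\kappa/2\}\cup\{|D\zeta|\ge\kappa/2\}$. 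Then use Hölder with exponent $1+\sigma_0$ again, together with $H^-(|Du+D\zeta|)^{1+\sigma_0}\lesssim H^-(|Du|)^{1+\sigma_0}+H^-(\|D\zeta\|_\infty)^{1+\sigma_0}$. The latter term produces the factor $\bigl(\|D\zeta\|_\infty/(H^-)^{-1}(\mu)+1\bigr)^{q(1+\sigma_0)}$, which explains the choice $\gamma=q(1+\sigma_0)$.
\end{itemize}

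\textbf{Main obstacle.} The delicate point is the large-gradient set for $Du+D\zeta$ when the measure of $\{|D\zeta|\ge\kappa/2\}$ is not small. If $\|D\zeta\|_\infty\ge\kappa/2$, I would bound the whole error crudely: since $H^-(\kappa)\lesssim H^-(\|D\zeta\|_\infty)$, the ratio $\mu^{\sigma_0}/H^-(\kappa)^{\sigma_0}$ multiplied by the $\gamma$-power of $\|D\zeta\|_\infty/(H^-)^{-1}(\mu)$ already dominates $c\fint H^-(|Du|+|D\zeta|)$. If instead $\|D\zeta\|_\infty<\kappa/2$, only the set $\{|Du|\ge\kappa/2\}$ matters, and the Chebyshev argument above applies with $\kappa/2$ in place of $\kappa$ (costing only a constant factor by \eqref{eq:Nconstant}). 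Collecting Steps (i) and (ii), and noting that each error carries at least the factor $(\cdots+1)^\gamma\mu$, yields \eqref{eq:almostminimizer}.
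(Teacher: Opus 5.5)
Your decomposition matches the paper's. The paper also writes the difference as $L_1+L_2+L_3$, handles $L_1,L_2$ by splitting at $|P|=\kappa$ (using \eqref{C7} on the small set and higher integrability on the large set), and handles $L_3$ by minimality plus \eqref{C5}. Your Step (ii) is correct, but the case distinction in your ``main obstacle'' is unnecessary. The pointwise bound $\chi_{\{|P|\ge\kappa\}}H^-(|P|)\le H^-(\kappa)^{-\sigma_0}H^-(|P|)^{1+\sigma_0}$, applied directly to $P=Du+D\zeta$ (equivalently, your H\"older--Chebyshev argument for $P=Du+D\zeta$ itself), followed by $H^-(|Du+D\zeta|)^{1+\sigma_0}\lesssim H^-(|Du|)^{1+\sigma_0}+H^-(|D\zeta|)^{1+\sigma_0}$, gives the $\theta$-term at once. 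Also, for the $Du$-part of Step (i) no comparison via $r^\alpha$ is needed: $a^-_{x_0,r}\ge a^-_{x_0,2r}$, so \eqref{eq:qintegrabilityq} applies directly.

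The genuine gap is in Step (i), in the $D\zeta$-part of the freezing error. Set $t:=\|D\zeta\|_\infty$ and $s:=(H^-)^{-1}(\mu)$. You must control $\fint_{B_r}(a(x)-a^-)|D\zeta|^q\,dx\le c\,r^\alpha t^q$, and the same $q$-power piece inside $r^{\beta_0}\fint_{B_r}H(x,|D\zeta|)\,dx$, since $a(x)$ may exceed $a^-$. You claim $t^q\lesssim (t/s+1)^q\mu$ by homogeneity of $H^-$, but this is false. Homogeneity controls $H^-(t)=t^p+a^-t^q$, not $t^q$, and $a^-$ may vanish. If $a^-=0$, then $s=\mu^{1/p}$ and $t^q=(t/s)^q\,\mu^{(q-p)/p}\,\mu$. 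The factor $\mu^{(q-p)/p}$ is not uniformly bounded, because \eqref{eq:smallness} only yields $\mu\le|B_{2r}|^{-1/(1+\sigma_0)}$.

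The missing idea is to spend part of the factor $r^\alpha$ on this growth. By Jensen's inequality and \eqref{eq:smallness}, $s^p\le\mu\le|B_{2r}|^{-1/(1+\sigma_0)}$, hence
\[
r^\alpha t^q\le r^\alpha s^{q-p}\Bigl(\frac{t}{s}+1\Bigr)^q s^p\le c\,r^{\alpha-\frac{(q-p)n}{p(1+\sigma_0)}}\Bigl(\frac{t}{s}+1\Bigr)^q\mu .
\]
This is \eqref{rJestimate} in the paper. It is stated there for $J=(H^+)^{-1}(\mu)$ and also gives $J\sim s$. The exponent is positive by \eqref{a}. This is exactly why $\alpha_2$ contains $\alpha-\frac{(q-p)n}{p(1+\sigma_0)}$, whereas you attribute $r^{\alpha_2}$ only to $\alpha_0$ and $\beta_0$.

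Alternatively, your own case split could close the gap. For $t\ge\kappa/2$ the lemma is trivial, as you essentially observe. For $t<\kappa/2$ with $\kappa\le1$ (which may be assumed in \eqref{C7}), one has $t^q\le t^p\le H^-(t)$, and homogeneity then applies. This has to be argued, however; as written, the step relies on an inequality that does not hold.
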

\begin{proof}
We omit the dependence on $x_0$ from the notation, and write $H^{\pm}(t):=H_{B_{r}(x_0)}^\pm(t)$, $f^-(t):=f(x_{x_0,r}^-,t)$ and $a^\pm:=a(x_{x_0,r}^\pm)$. 
Set $J:=(H^+)^{-1}(\mu)$. Note that, by (\ref{eq:smallness}), 
$$
J^p\leq \mu \le \left(\fint_{B_{2r}}H^-_{B_{2r}}(|Du|)^{1+\sigma_0}\, dx\right)^{\frac{1}{1+\sigma_0}}   \leq |B_{2r}|^{-\frac{1}{1+\sigma_0}},
$$
hence
\begin{equation}\label{rJestimate}
            (a^+-a^-)J^q\leq cJ^pr^\alpha J^{q-p}\leq cJ^pr^{\alpha-\frac{(q-p)n}{p(1+\sigma_0)}},
\end{equation}
\begin{equation*}
    \mu=H^+(J) = H^-(J)+  (a^+-a^-)J^q \leq \left(1+r^{\alpha-\frac{(q-p)n}{p(1+\sigma_0)}}\right)H^-(J),
\end{equation*}
which implies
\begin{equation}\label{Jsimmilar}
    \mu=H^+(J) \sim H^-(J)
    \quad \text{and}\quad
    J= (H^+)^{-1}(\mu) \sim (H^-)^{-1}(\mu).
\end{equation}

We observe 
that 
\begin{equation*}
    \begin{aligned}
        \fint_{B_r}H^-(|Du|)\, dx&\leq\underbrace{\fint_{B_r}H^-(|Du|)-f^-(Du)+f^-(\mathbf{0})\, dx}_{=:L_1}\\
        &\quad+\underbrace{\fint_{B_r}f^-(Du+D\zeta)-f^-(\mathbf{0})-H^-(|Du+D\zeta|)\, dx}_{=:L_2}\\
        &\quad+\underbrace{\fint_{B_r}f^-(Du)-f^-(Du+D\zeta)\, dx}_{=:L_3}\\
        &\quad+\fint_{B_r}H^-(|Du+D\zeta|)\, dx.
    \end{aligned}
\end{equation*}
We then estimate the integrals $L_i$ $(i=1,2,3)$ separately.

\textit{Estimate of $L_1$.} Let $\kappa>0$ be from (\ref{ass:A6}) for a given $\delta>0$ and set 
\begin{equation*}
    E_1:=\{|Du|<\kappa\}\cap B_r
    \quad \text{and}\quad
    E_2:=\{|Du|\geq\kappa\}\cap B_r.
\end{equation*}
Then by (\ref{ass:A6}),
\begin{equation*}
    \fint_{B_r}\chi_{E_1}|H^-(|Du|)-f^-(Du)+f^-(\mathbf{0})|\, dx\leq\delta\fint_{B_r}H^-(|Du|)\, dx\leq c\delta \mu.
\end{equation*}
On the other hand, by the growth condition (\ref{ass:gc}) and \eqref{eq:Hoderintegrability} with Jensen's inequality,
\begin{equation*}
\begin{aligned}
    &\fint_{B_r}\chi_{E_2}|H^-(|Du|)-f^-(Du)+f^-(\mathbf{0})|\, dx\\
    &\leq  c\fint_{B_r}\chi_{E_2}H^-(|Du|)\, dx\\
    &\leq\frac{c}{H^-(\kappa)^{\sigma_0}}\fint_{B_r}\chi_{E_2}H^-(|Du|)^{1+\sigma_0}\, dx\\
    &\leq\frac{c}{H^-(\kappa)^{\sigma_0}}\left(\fint_{B_{2r}}H^-_{B_{2r}}(|Du|)\, dx\right)^{1+\sigma_0}\\
    &\leq c \theta \mu.
\end{aligned}
\end{equation*}
where $\theta:=\frac{\left(\fint_{B_{2r}}H^-_{B_{2r}}(|Du|)\, dx\right)^{\sigma_0}}{H^-(\kappa)^{\sigma_0}}$.
Thus
\begin{equation*}
    |L_1|\leq\fint_{B_r}|H^-(|Du|)-f^-(Du)+f^-(\mathbf{0})|\, dx\leq c(\delta +\theta) \mu .
\end{equation*}

\textit{Estimate of $L_2$.} Let $\kappa>0$ be as before and set
\begin{equation*}
    E_3:=\{|Du+D\zeta|<\kappa\}\cap B_r\ \text{ and }\ E_4:=\{|Du+D\zeta|\geq\kappa\}\cap B_r.
\end{equation*}
Then by (\ref{ass:A6}),
\begin{equation*}
\begin{aligned}
    \fint_{B_r}\chi_{E_3}|H^-(|Du+D\zeta|)-f^-(Du+D\zeta)+f^-(\mathbf{0})|\, dx 
    &\leq\delta\fint_{B_r}H^-(|Du+D\zeta|)\, dx \\
    &\leq c\delta\mu +c\delta\fint_{B_r}H^-(|D\zeta|)\, dx.
\end{aligned}
\end{equation*}
Moreover, using the fact that $H(x,ts)\leq t^qH(x,s)$ for $x\in B_r$, $t\geq1$ and $s>0$ and (\ref{Jsimmilar}),
\begin{equation}\label{eq:etabound}
        \fint_{B_r}H^-(|D\zeta|)\, dx=\fint_{B_r}H^-\left(\frac{|D\zeta|}{J}J\right)\, dx \leq c\left(\frac{\|D\zeta\|_\infty}{(H^-)^{-1}(\mu)}+1\right)^q\mu.
\end{equation}
On the other hand, by the growth condition (\ref{ass:gc}),
\begin{equation*}
\begin{aligned}
    &\fint_{B_r}\chi_{E_4}|H^-(|Du+D\zeta|)-f^-(Du+D\zeta)+f^-(\mathbf{0})|\, dx\\
    &\qquad\leq\frac{c}{H^-(\kappa)^{\sigma_0}}\fint_{B_r}\chi_{E_4}H^-(|Du+D\zeta|)^{1+\sigma_0}\, dx\\
    &\qquad\leq\frac{c}{H^-(\kappa)^{\sigma_0}}\fint_{B_r}H^-(|Du|)^{1+\sigma_0}\, dx+\frac{c}{H^-(\kappa)^{\sigma_0}}\fint_{B_r}H^-(|D\zeta|)^{1+\sigma_0}\, dx.
\end{aligned}
\end{equation*}
For the integrals on the right-hand side, we use \eqref{eq:Hoderintegrability} with Jensen's inequality to obtain 
\begin{equation*}
    \frac{1}{H^-(\kappa)^{\sigma_0}}\fint_{B_r}H^-(|Du|)^{1+\sigma_0}\, dx
    \leq\frac{c}{H^-(\kappa)^{\sigma_0}}\left(\fint_{B_{2r}}H^-_{B_{2r}}(|Du|)\, dx\right)^{1+\sigma_0}
    \leq c \theta \mu.
\end{equation*}
and, also using  \eqref{Jsimmilar},
 \begin{equation*}
    \begin{aligned}
        \frac{1}{H^-(\kappa)^{\sigma_0}}\fint_{B_r}H^-(|D\zeta|)^{1+\sigma_0}\, dx&=\frac{1}{H^-(\kappa)^{\sigma_0}}\fint_{B_r}H^-\left(\frac{|D\zeta|}{J}J\right)^{1+\sigma_0}\, dx\\
        &\leq c\frac{1}{H^-(\kappa)^{\sigma_0}}\left(\frac{\|D\zeta\|_\infty}{(H^-)^{-1}(\mu)}+1\right)^{q(1+\sigma_0)}\mu^{1+\sigma_0}\\
        &= c\theta\left(\frac{\|D\zeta\|_\infty}{(H^-)^{-1}(\mu)}+1\right)^{q(1+\sigma_0)}\mu.
    \end{aligned}
\end{equation*}
Therefore we have
\begin{equation*}
\begin{aligned}
    L_2&=\fint_{B_r}|f^-(Du+D\zeta)-f^-(\mathbf{0})-H^-(|Du+D\zeta|)|\, dx\\
    &\leq c\left\{\delta+\delta\left(\frac{\|D\zeta\|_\infty}{(H^-)^{-1}(\mu)}+1\right)^q+\theta+\theta\left(\frac{\|D\zeta\|_\infty}{(H^-)^{-1}(\mu)}+1\right)^{q(1+\sigma_0)}\right\} \mu.
\end{aligned}
\end{equation*}

\textit{Estimate of $L_3$.} Since $u$ minimizes (\ref{F}), using  \eqref{C5},  we have
\begin{equation*}
    \begin{aligned}
        L_3&\le\fint_{B_r}f^-(Du)-f(x,Du)\, dx+\fint_{B_r}f(x,Du+D\zeta)-f^-(Du+D\zeta)\, dx\\
        &\leq c\fint_{B_r}r^{\beta_0}\big(H(x,|Du|)+H(x,|Du+D\zeta|)\big)+(a(x)-a^-)\big(|Du|^q+|Du+D\zeta|^q\big)\, dx\\
        &\leq cr^{\beta_0}\fint_{B_{r}}H(x,|Du|)\, dx+c\fint_{B_{r}}(a(x)-a^-)|Du|^q\, dx+cr^{\beta_0}\fint_{B_r}H(x,|D\zeta|)\, dx \\
        &\qquad +c\fint_{B_r}(a(x)-a^-)|D\zeta|^q\, dx.
    \end{aligned}
\end{equation*}
By \eqref{eq:Hoderintegrability} with H\"older's inequality and \eqref{eq:qintegrabilityq} with Jensen's inequality,
\begin{equation*}
\begin{aligned}
r^{\beta_0}\fint_{B_{r}}H(x,|Du|)\, dx+\fint_{B_{r}}&(a(x)-a^-)|Du|^q\, dx\\
& \le c r^{\min\{\alpha_0,\beta_0\}} \fint_{B_{2r}}H^-_{B_{2r}}(|Du|)\, dx = c r^{\min\{\alpha_0,\beta_0\}}\mu.
\end{aligned}\end{equation*}
Moreover, as in (\ref{eq:etabound}), adopting (\ref{rJestimate}) and \eqref{Jsimmilar}, we obtain that 
\begin{equation*}
        \fint_{B_r}H(x,|D\zeta|)\, dx=\fint_{B_r}H\left(x,\frac{|D\zeta|}{J}J\right)\, dx\leq c\left(\frac{\|D\zeta\|_\infty}{(H^-)^{-1}(\mu)}+1\right)^q \mu,
\end{equation*}
and
\begin{equation*}
    \begin{aligned}
        \fint_{B_r}(a(x)-a^-)|D\zeta|^q\, dx&\leq\fint_{B_r}(a(x)-a^-)\frac{|D\zeta|^q}{J^q}J^q\, dx\\
        &\leq c\left(\frac{\|D\zeta\|_\infty}{(H^-)^{-1}(\mu)}+1\right)^q\fint_{B_r}(a(x)-a^-)J^q\, dx\\
        &\leq c\left(\frac{\|D\zeta\|_\infty}{(H^-)^{-1}(\mu)}+1\right)^qr^{\alpha-\frac{(q-p)n}{p(1+\sigma_0)}}J^p\\
        &\le c r^{\alpha-\frac{(q-p)n}{p(1+\sigma_0)}}\left(\frac{\|D\zeta\|_\infty}{(H^-)^{-1}(\mu)}+1\right)^q \mu.
    \end{aligned}
\end{equation*}
Thus, combining the last four estimates, we find that
\begin{equation*}
        |L_3| \leq c r^{\min\{\alpha_0,\beta_0,\alpha-\frac{(q-p)n}{p(1+\sigma_0)}\}}\left(\frac{\|D\zeta\|_\infty}{(H^-)^{-1}(\mu)}+1\right)^q \mu.
\end{equation*}
Finally, collecting all the estimates for  $L_i$ $(i=1,2,3)$, we obtain (\ref{eq:almostminimizer}).
\end{proof}

Now we obtain the decay estimate for the excess of the gradient of the minimizer of (\ref{F}) in the degenerate regime.
\begin{lem}\label{lem:degexcessdecay0}
Let $\gamma_0$ and $\alpha_2$ be given in Lemma~\ref{lem:gamma0} and \eqref{alpha2}, respectively.  For $\chi>0$ and $\gamma\in(0,\gamma_0)$, there exist $\tau\in(0,\frac14)$ and $\delta_3,\delta_4\in(0,1)$ depending on 
    $n, N, p, q, \alpha, [a]_{C^{0,\alpha}}, L, \nu, \beta_0,  \chi$, and $\gamma$ such that if $B_{2r}(x_0)\Subset\Omega$ satisfies (\ref{eq:smallness}) with $r\leq\frac{1}{2}$,
    \begin{equation}\label{eq:degeneratedecaay}
        \chi\fint_{B_{2r}(x_0)}\left|V_{H_{2r}^-}(Du)\right|^2\, dx\leq\fint_{B_{2r}(x_0)}\left|V_{H_{2r}^-}(Du)-(V_{H_{2r}^-}(Du))_{x_0,2r}\right|^2\, dx,
    \end{equation}
    \begin{equation}\label{eq:excessbound1}
        \fint_{B_{2r}(x_0)}\left|V_{H_{2r}^-}(Du)-(V_{H_{2r}^-}(Du))_{x_0,2r}\right|^2\, dx\leq \delta_3,
        \end{equation}
    and
    \begin{equation}\label{eq:radiusbound1}
       r^{\alpha_2}\leq\delta_4,
    \end{equation}
    then we have 
    \begin{equation*}
    \begin{aligned}
        \fint_{B_{2\tau r}(x_0)}&\left|V_{H^-_{2\tau r}}(Du)-(V_{H_{2\tau r}^-}(Du))_{x_0,2\tau r}\right|^2\, dx\\
        &\leq\tau^{2\gamma}\fint_{B_{2r}(x_0)}\left|V_{H_{2r}^-}(Du)-(V_{H_{2r}^-}(Du))_{x_0,2r}\right|^2\, dx,
    \end{aligned}
    \end{equation*}
 where   $H^-_\rho(t):=H^-_{B_{\rho}(x_0)}(t)$ for $\rho>0$.
\end{lem}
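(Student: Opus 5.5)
Write $H^-_\rho:=H^-_{B_\rho(x_0)}$, $\mu:=\fint_{B_{2r}}H^-_{2r}(|Du|)\,dx$ and $E:=\fint_{B_{2r}}|V_{H_{2r}^-}(Du)-(V_{H_{2r}^-}(Du))_{2r}|^2dx$. By \eqref{eq:Vsquare}, $\mu\sim\fint|V_{H^-_{2r}}(Du)|^2$. Condition \eqref{eq:degeneratedecaay} then gives $\chi\mu\lesssim E$, and trivially $E\lesssim\mu$. Since \eqref{eq:excessbound1} gives $E\le\delta_3$, we get $\mu\le c\delta_3/\chi$.

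\emph{Step 1 (almost minimality).} The plan is to apply Lemma~\ref{lem:almostminimizer} with a parameter $\delta'$ to be chosen. The error factor is $\theta+r^{\alpha_2}+\delta'$, where $\theta\lesssim\mu^{\sigma_0}/H^-_r(\kappa(\delta'))^{\sigma_0}$. Fix $\delta'$ small, which fixes $\kappa$. Then choose $\delta_3$ so small (depending on $\kappa,\chi$) that $\theta\le\delta'$, and $\delta_4\le\delta'$. This makes $u$ an almost minimizer of $v\mapsto\int_{B_r}H^-_r(|Dv|)$ with error $c\delta'(\cdots)^\gamma\mu$. Next, \eqref{eq:Hoderintegrability} gives $\fint_{B_r}H^-_r(|Du|)^{1+\sigma_0}\lesssim\mu^{1+\sigma_0}$, using $H^-_r\le H(x,\cdot)$. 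The averages of $H^-_r(|Du|)$ over $B_r$ are bounded by $c\mu$; I would renormalize $\mu$ by a constant, which only costs constants. Lemma~\ref{lem:minimizingapproximation} with $\phi=H^-_r$ then yields the $H^-_r$-minimizer $h$ on $B_r$ with $h=u$ on $\partial B_r$ and
\[
\fint_{B_r}|V_{H^-_r}(Du)-V_{H^-_r}(Dh)|^2dx\lesssim\fint_{B_r}\frac{(H^-_r)'(|Du|+|Dh|)}{|Du|+|Dh|}|Du-Dh|^2dx\le c(\delta')^{\bar\gamma}\mu .
\]
Here we use \eqref{eq:Vtoshift}.

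\emph{Step 2 (decay for $h$ and transfer).} The function $h$ solves the $\overline\phi$-Laplace system with $\overline\phi=H^-_r$, which has the form \eqref{eq:phiD} with $a=a^-_{x_0,r}$. So Lemma~\ref{lem:gamma0} gives, on $B_{2\tau r}$ with $2\tau<1$,
\[
\fint_{B_{2\tau r}}|V_{H^-_r}(Dh)-(V_{H^-_r}(Dh))_{2\tau r}|^2\le c\tau^{2\gamma_0}\fint_{B_r}|V_{H^-_r}(Dh)|^2\le c\tau^{2\gamma_0}\mu .
\]
The last inequality uses minimality of $h$ and $\fint_{B_r}H^-_r(|Du|)\lesssim\mu$. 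Combining this with Step 1 and the triangle inequality, the $V_{H^-_r}$-excess of $Du$ on $B_{2\tau r}$ is bounded by $c\tau^{2\gamma_0}\mu+c\tau^{-n}(\delta')^{\bar\gamma}\mu$.

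\emph{Step 3 (changing the frozen function).} We must pass from $V_{H^-_r}$ to $V_{H^-_{2\tau r}}$. By \eqref{eq:Vmean} and \eqref{eq:Vtoshift}, the excess is comparable to $\fint H_{|P|}(x^-_\rho,|Du-P|)$ for any constant matrix $P$. Switching from $x^-_r$ to $x^-_{2\tau r}$ changes $H$ only by $(a^-_{2\tau r}-a^-_r)t^q\le(a(x)-a^-_r)t^q$. Integrating this difference against $|Du|^q+|P|^q$ with $P=(Du)_{2\tau r}$ and using \eqref{eq:qintegrabilityq} gives an error of at most $c\tau^{-n}r^{\alpha_0}\mu$; the $|P|^q$ term is controlled similarly via Jensen. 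The same argument handles the passage from $B_{2r}$ to $B_r$. So the excess on $B_{2\tau r}$ is at most $c\,(\tau^{2\gamma_0}+\tau^{-n}(\delta')^{\bar\gamma}+\tau^{-n}\delta_4^{\alpha_0/\alpha_2})\mu$. Using $\mu\le c\chi^{-1}E$, we first choose $\tau$ with $c\chi^{-1}\tau^{2\gamma_0}\le\frac12\tau^{2\gamma}$, which is possible since $\gamma<\gamma_0$. Then we choose $\delta'$, $\delta_4$ and finally $\delta_3$ so that the remaining terms are at most $\frac12\tau^{2\gamma}\chi E/c$.

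\emph{Main obstacle.} I expect the main obstacle to be Step 3 together with the normalization in Step 1. The frozen functions $H^-_{2r}$, $H^-_r$ and $H^-_{2\tau r}$ differ, and the minimizing approximation needs $\mu$ measured with $H^-_r$ while the hypotheses use $H^-_{2r}$. Every such switch must be absorbed using \eqref{eq:qintegrabilityq} and \eqref{eq:smallness}, with errors of the form $\tau^{-n}r^{\alpha_2}\mu$. These are acceptable only because $\mu\lesssim E/\chi$ in the degenerate regime.
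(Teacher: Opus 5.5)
Your proposal is correct and follows the paper's proof essentially step for step: almost-minimality from Lemma~\ref{lem:almostminimizer} with $\kappa$ fixed before $\delta_3$, the $\phi$-minimizing approximation with $\phi=H^-_r$, the decay of Lemma~\ref{lem:gamma0} for the comparison map, control of the frozen-coefficient switch by \eqref{eq:qintegrabilityq}, and the degenerate-regime bound $\mu\lesssim\chi^{-1}E$ used to choose $\tau$ first and then the smallness parameters. The only difference is cosmetic: the paper passes from $V_{H^-_r}$ to $V_{H^-_{2\tau r}}$ with the pointwise bound \eqref{eq:Vdiff}, which avoids your detour through shifted functions, \eqref{eq:Vmean}, and the extra $|P|^q$ term.
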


\begin{proof}
   We omit the dependence on $x_0$ from the notation. Note that by (\ref{eq:degeneratedecaay}) and (\ref{eq:excessbound1}),
   \begin{equation*}
       \fint_{B_{2r}}H_{2r}^-(|Du|)\, dx\leq\fint_{B_{2r}}|V_{H_{2r}^-}(Du)|^2\, dx\leq \chi^{-1}\delta_3.
   \end{equation*}
   
From Lemma~\ref{lem:almostminimizer} and \eqref{eq:Hoderintegrability},  we can  apply Lemma \ref{lem:minimizingapproximation} to $w=u$ and $\phi=H^-_{r}$. Furthermore, using \eqref{eq:Vtoshift} and the assumptions \eqref{eq:degeneratedecaay}\,--\,\eqref{eq:radiusbound1}, we obtain that
   \begin{equation*}
   \begin{aligned}
   &\fint_{B_{r}}|V_{H_{r}^-}(Du)-V_{H_{r}^-}(D\Bar{h})|^2\, dx\\
   & \leq c\left\{\frac{\left(\fint_{{2r}}H^-_{2r}(|Du|)\, dx\right)^{\sigma_0}}{H^-_{r}(\kappa)^{\sigma_0}}+r^{\alpha_2}+\delta\right\}^{\bar\gamma}\fint_{B_{2r}}H^-_{B_{2r}}(|Du|)\, dx\\
 & \leq C_0\left\{\bigg(\frac{\chi^{-1}\delta_3}{\kappa^{p\sigma_0}}\bigg)^{\bar\gamma}+\delta_4^{\bar\gamma} +\delta^{\bar\gamma}\right\} \fint_{B_{2r}}|V_{H_{2r}^-}(Du)|^2\, dx
   \end{aligned}
   \end{equation*}
for some $C_0\ge 1$. Here, $\bar h\in u+ W_0^{1,H_{r}^-}(B_{r},\R^N)$ is the unique minimizer of the energy $\int_{B_{r}} H_{r}^-(|Dv|)\, dx$ and $\bar\gamma:= \frac{\sigma_0p}{(1+\sigma_0)q+p}$. We first choose $\delta>0$ such that $C_0\delta^{\bar\gamma}\leq\frac{1}{2}\tau^{2\gamma_0+n}$; hence $\kappa$ is fixed. We next choose $\delta_3$ and $\delta_4$ so that
   \begin{equation}\label{delta3delta4}
       C_0 \bigg\{\bigg(\frac{\chi^{-1}\delta_3}{\kappa^{p\sigma_0}}\bigg)^{\bar\gamma}+\delta_4^{\bar\gamma}\bigg\}\leq\frac{1}{2}\tau^{2\gamma_0+n}.
   \end{equation}
   Then these choices of the parameters yield
   \begin{equation}\label{eq:ndegdecay1}
       \fint_{B_{r}}|V_{H_{r}^-}(Du)-V_{H_{r}^-}(D\Bar{h})|^2\, dx
       \leq \tau^{2\gamma_0+n}\fint_{B_{2r}}|V_{H_{2r}^-}(Du)|^2\, dx.
   \end{equation}
   
   We next observe from the elementary inequality $|\sqrt{1+t_1}-\sqrt{1+t_2}|^2\leq|t_1-t_2|$ for $t_1,t_2\geq0$ that for any $P\in\mathbb{R}^{N\times n}$,
   \begin{equation}\label{eq:Vdiff}
   \begin{aligned}
       |V_{H_{{2\tau r}}^-}(P)-V_{H_{r}^-}(P)|^2&\leq|P|^p\left|\sqrt{1+a_{2\tau r}^-\frac{q}{p}|P|^{q-p}}-\sqrt{1+a_{r}^-\frac{q}{p}|P|^{q-p}}\right|^2\\
       &\leq\frac{q}{p}(a_{{2\tau r}}^--a_{{r}}^-)|P|^q.
   \end{aligned}
   \end{equation}
   Thus, with (\ref{eq:qintegrabilityq}), (\ref{eq:Vsquare}), (\ref{eq:radiusbound1}) and the inequality $\delta_4\leq\tau^{2\gamma_0+n}$ (which follows from \eqref{delta3delta4}),
   \begin{equation}\label{eq:ndegdecay2}
       \begin{aligned}
           \fint_{B_{2\tau r}}|V_{H_{2\tau r}^-}(Du)-V_{H_{r}^-}(Du)|^2\, dx&\leq \frac{q}{p}\fint_{B_{2\tau r}}(a_{{2\tau r}}^--a_{{r}}^-)|Du|^q\, dx\\
           &\leq \frac{q}{2^np}\tau^{-n}\fint_{B_{r}}(a(x)-a_{{r}}^-)|Du|^q\, dx\\
           &\leq c\tau^{-n}\delta_4\fint_{B_{2r}}|V_{H_{2r}^-}(Du)|^2\, dx\\
           &\leq c\tau^{2\gamma_0}\fint_{B_{2r}}|V_{H_{2r}^-}(Du)|^2\, dx.
       \end{aligned}
   \end{equation}

  Therefore, using (\ref{eq:ndegdecay1}),  (\ref{eq:ndegdecay2}), Lemma \ref{lem:gamma0} with $\bar\phi=H_{r}^-$, and the inequality $\int_{B_{r}}H^-_{r}(|D\bar{h}|)\,dx \le  \int_{B_{r}}H^-_{r}(|Du|)\,dx$, we have  that
   \begin{equation*}
       \begin{aligned}
         &  \fint_{B_{2\tau r}}|V_{H_{{2\tau r}}^-}(Du)-(V_{H_{2\tau r}^-}(Du))_{{2\tau r}}|^2\,dx
           \leq\fint_{B_{2\tau r}}|V_{H_{{2\tau r}}^-}(Du)-(V_{H_{{r}}^-}(D\bar{h}))_{{2\tau r}}|^2\, dx\\
           &\quad\leq 4\bigg(\fint_{B_{2\tau r}}|V_{H_{{2\tau r}}^-}(Du)-V_{H_{{r}}^-}(Du)|^2\, dx +\fint_{B_{2\tau r}}|V_{H_{{r}}^-}(Du)-V_{H_{{r}}^-}(D\bar{h})|^2\, dx\\
           &\qquad\qquad+\fint_{B_{2\tau r}}|V_{H_{{r}}^-}(D\bar{h})-(V_{H_{{r}}^-}(D\bar{h}))_{{2\tau r}}|^2\, dx\bigg)\\
           &\quad\leq c\bigg(\tau^{2\gamma_0}\fint_{B_{2r}}|V_{H_{2r}^-}(Du)|^2\, dx +\tau^{-n}\fint_{B_{r}}|V_{H_{{r}}^-}(Du)-V_{H_{{r}}^-}(D\bar{h})|^2\, dx\\
           &\qquad\qquad+\tau^{2\gamma_0}\fint_{B_{r}}|V_{H_{{r}}^-}(D\bar{h})-(V_{H_{{r}}^-}(D\bar{h}))_{{r}}|^2\, dx\bigg)\\
           &\quad\leq c\tau^{2\gamma_0}\bigg(\fint_{B_{2r}}|V_{H_{2r}^-}(Du)|^2\, dx+\fint_{B_{r}}|V_{H_{r}^-}(Du)|^2\, dx\bigg)\\
           &\quad\leq \tilde c\tau^{2\gamma_0}\fint_{B_{2r}}|V_{H_{{2r}}^-}(Du)|^2\, dx
       \end{aligned}
   \end{equation*}
for some $\tilde c>0$, where in the last inequality we used \eqref{eq:Hoderintegrability}.
Finally, choosing $\tau>0$ such that $\tilde{c}\tau^{2\gamma_0}\chi^{-1}\leq\tau^{2\gamma}$, i.e., $\tau\leq (\chi/\tilde{c})^{\frac{1}{2(\gamma_0-\gamma)}}$, and using \eqref{eq:degeneratedecaay}, we obtain that
             \begin{equation*}
       \begin{aligned}
         &  \fint_{B_{2\tau r}}|V_{H_{2\tau r}^-}(Du)-(V_{H_{2\tau r}^-}(Du))_{2\tau r}|^2\,dx\\
           &\qquad \leq \tilde{c}\tau^{2\gamma_0}\chi^{-1}\fint_{B_{2r}}|V_{H_{2r}^-}(Du)-(V_{H_{2r}^-}(Du))_{{2r}}|^2\, dx\\
           &\qquad\leq \tau^{2\gamma}\fint_{B_{2r}}|V_{H_{2r}^-}(Du)-(V_{H_{2r}^-}(Du))_{2r}|^2\, dx.
       \end{aligned}
   \end{equation*}
\end{proof}

\section{Iteration} 

In this section, we prove our main result, Theorem~\ref{mnthm}, by means of an iteration argument. Let $u\in W^{1,p}(\Omega,\mathbb{R}^N)$ with $H(\cdot,|Du|)\in L^1(\Omega)$ be a minimizer of (\ref{F}), where the function $H$ in \eqref{H} satisfies \eqref{a} and  the function $f(x,P)$ in \eqref{F} satisfies \eqref{C1}--\eqref{C7}.  We begin by establishing an excess decay estimate in the nondegenerate regime.

\begin{lem}\label{lem:iteration}
    Let $B_{2R}(x_0)\Subset\Omega$ with $R\in(0,\frac{1}{4})$ satisfy (\ref{eq:smallness}) with $r$ replaced by $R$, and let $\beta\in(0,\frac{\alpha_3}{4}]$ with $\alpha_3$ given in \eqref{alpha3}. Then there exist $\delta_5,\delta_6\in(0,1)$ and $c>0$ depending on  $n, N, p, q, \alpha, [a]_{C^{0,\alpha}}, L, \nu, \beta_0$,  and $\beta$ such that if
    \begin{equation*}
        \fint_{B_R(x_0)}\left|V_{H_{R}^-}(Du)-(V_{H_{R}^-}(Du))_{x_0,R}\right|^2\, dx\leq\delta_5\fint_{B_R(x_0)}\left|V_{H_{R}^-}(Du)\right|^2\, dx
    \end{equation*}
    and
    \begin{equation}\label{eq:smallradius}
        R^\frac{\alpha_3}{2}\leq\delta_6,
    \end{equation}
    then we have 
    \begin{equation}\label{eq:ndegexcessdecay}
        \begin{aligned}
            &\fint_{B_r(x_0)}\left|V_{H_{r}^-}(Du)-(V_{H_{r}^-}(Du))_{x_0,r}\right|^2\, dx\\
            &\leq c\left(\frac{r}{R}\right)^{2\beta}\fint_{B_R(x_0)}\left|V_{H_{R}^-}(Du)-(V_{H_{R}^-}(Du))_{x_0,R}\right|^2\, dx +cr^{2\beta}\fint_{B_R(x_0)}\left|V_{H_{R}^-}(Du)\right|^2\, dx
        \end{aligned}
    \end{equation}
    for every $r\in(0,\frac{R}{8})$, where $H^-_\rho(t):= H^{-}_{B_\rho(x_0)}(t)$. 
    \end{lem}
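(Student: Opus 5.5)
The plan is to iterate Lemma~\ref{lem:ndegexcessdecay0} along the geometric radii $r_k:=\tau^k R$, with $\tau\in(0,\frac14)$ fixed. The smallness of the normalized excess is propagated from scale to scale, and the shrinking radius terms are absorbed through the correction $\rho^{\alpha_3/2}H^-_\rho(|(Du)_\rho|)$ already built into $E_*$.

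\textbf{Choice of parameters.} First fix $\tau$ so that $c\tau^2\le \frac12\tau^{2\beta}$, with $c$ the constant of Lemma~\ref{lem:ndegexcessdecay0}. This is possible since $\beta\le\alpha_3/4<1$. Next choose $\varepsilon=\tau^{n+2}$, so that the bound reads
\[
E(\tau r)\le 2c\tau^2 E_*(2r)\le \tau^{2\beta}E_*(2r).
\]
This fixes $\delta_1,\delta_2$. We apply the lemma with $2r=r_k$, so that $B_{\tau r}=B_{r_{k+1}/2}$. It is cleaner to take the step $\tau/2$; we simply rename, call the ratio $\tau$, and use the comparability \eqref{meanexcess} of $E(\rho)$ with the mean-oscillation excess.

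\textbf{Induction hypothesis.} For every $k$ we want
\[
\varPhi(r_k)\le \delta_1 \qquad\text{and}\qquad E_*(r_{k+1})\le \tau^{2\beta}E_*(r_k)\ \text{up to constants}.
\]
The key auxiliary estimate is that $H^-_{r_k}(|(Du)_{r_k}|)$ stays comparable to $H^-_R(|(Du)_R|)$. To get it, estimate
\[
|V(Q_{k+1})-V(Q_k)|^2\lesssim \tau^{-n}E(r_k),
\]
and sum the geometric series $\sum_k E_*(r_k)\lesssim E_*(R)\le (\delta_5+\delta_6)H^-_R(|Q_0|)$. Changing the frozen function $H^-_{r_{k+1}}$ versus $H^-_{r_k}$ costs only $r_k^{\alpha}|Q|^q$, which is absorbed via \eqref{eq:qintegrabilityq} and the $\alpha_2$-argument already used for $K_1$. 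For $\delta_5,\delta_6$ small this yields $\frac12|Q_0|\le|Q_k|\le 2|Q_0|$. Then
\[
\varPhi(r_{k+1})\le \frac{E_*(r_{k+1})}{H^-(|Q_{k+1}|)}\lesssim \delta_5+\delta_6\le\delta_1,
\]
and condition \eqref{eq:smallradiuscondition} holds trivially since $r_k\le R$.

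\textbf{Conclusion.} Iterating gives
\[
E_*(r_k)\le \tau^{2\beta k}E(R)+\sum_j \tau^{2\beta(k-j)}r_j^{\alpha_3/2}H^-(|Q_0|).
\]
Since $r_j^{\alpha_3/2}=r_j^{2\beta}r_j^{\alpha_3/2-2\beta}$ and $\beta\le \alpha_3/4$, each term is $\lesssim r_k^{2\beta}$ times a geometric sum. This gives
\[
E(r_k)\lesssim \Bigl(\frac{r_k}{R}\Bigr)^{2\beta}E(R)+r_k^{2\beta}\fint_{B_R}|V_{H^-_R}(Du)|^2dx,
\]
using $H^-_R(|Q_0|)\lesssim \fint_{B_R}|V_{H^-_R}(Du)|^2$. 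For general $r\in(r_{k+1},r_k]$, compare $E(r)$ with $\tau^{-n}E(r_k)$, adjusting the frozen function by the same $a$-oscillation estimate. Finally, \eqref{meanexcess} converts the result into the stated form \eqref{eq:ndegexcessdecay}.

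\textbf{Main obstacle.} The hardest step is the uniform control of $|Q_k|$ together with the change of the frozen function $H^-_{r_k}$ along the iteration. I would handle it through the summability of $E_*(r_k)^{1/2}/H^-(|Q_0|)^{1/2}$, measured in $V$-distance.
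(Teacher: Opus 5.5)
Your overall architecture matches the paper's proof. You iterate Lemma~\ref{lem:ndegexcessdecay0} along $r_k=\tau^kR$ and propagate by induction the smallness of the normalized excess together with a uniform size bound. You track $|(Du)_{r_k}|$ in $V$-distance, while the paper tracks $\fint_{B_{\tau^kR}}|V_k(Du)|^2\le 2\fint_{B_R}|V_0(Du)|^2$, which matches the form of \eqref{eq:smalldecaycondition} directly. Both then interpolate intermediate radii via \eqref{eq:Vdiff} and \eqref{eq:qintegrabilityq}.

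There is, however, a concrete gap in your choice of $\tau$. Requiring only $c\tau^2\le\frac12\tau^{2\beta}$ makes the per-step decay factor exactly $\tau^{2\beta}$, the same as the target rate. Unrolling $E(r_{k+1})\le\tau^{2\beta}E(r_k)+C\tau^{2\beta}r_k^{\alpha_3/2}H^-_R(|Q_0|)$ gives the inhomogeneous sum
\[
\sum_{j<k}\tau^{2\beta(k-j)}r_j^{\alpha_3/2}=r_k^{2\beta}\sum_{j<k}r_j^{\frac{\alpha_3}{2}-2\beta}.
\]
This is geometric only if $2\beta<\alpha_3/2$. At the admissible endpoint $\beta=\alpha_3/4$, every summand equals $1$ and you get $k\,r_k^{2\beta}\approx r_k^{2\beta}\log(R/r_k)$. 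This endpoint is exactly the value used in the proof of Theorem~\ref{mnthm} whenever $\alpha_3/4\le\gamma_0/2$. So your step ``each term is $\lesssim r_k^{2\beta}$ times a geometric sum'' fails there, and \eqref{eq:ndegexcessdecay} with an $r$-independent constant does not follow.

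For the same reason, your induction claim $E_*(r_{k+1})\le\tau^{2\beta}E_*(r_k)$ ``up to constants'' cannot be iterated. A factor $C>1$ per step compounds to $C^k$. Shrinking $\tau$ does not help, because the term $r_{k+1}^{\alpha_3/2}H^-_{r_{k+1}}(|Q_{k+1}|)$ only gains $\tau^{\alpha_3/2}=\tau^{2\beta}$. For $\beta<\alpha_3/4$ strictly, the log loss disappears, at the price of a constant that blows up as $\beta\to\alpha_3/4$.

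The missing idea is that Lemma~\ref{lem:ndegexcessdecay0} yields $\tau^2$, strictly more than you need. Choose $\tau$ and $\varepsilon$ so that the per-step factor is $\tau^{1+\beta}$; the paper takes $\tau^{1-\beta}\le 1/(2c^*)$ and $\varepsilon=\tau^{1+n+\beta}/(2c^*)$. Then iterate the excess alone, keeping the radius term separate.

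With this choice the inhomogeneous part becomes $(\tau^kR)^{2\beta}\sum_{j}\tau^{(1-\beta)(k-j)}\le (\tau^kR)^{2\beta}/(1-\tau^{1-\beta})$, uniformly in $k$. This is exactly \eqref{eq:itid2}. The normalized excess then decays like $\tau^{2\beta k}\delta_5$, as in \eqref{eq:itid1}. That decay supplies the summability you need for the comparability of $|Q_k|$ with $|Q_0|$, or equivalently for the paper's bound \eqref{eq:itid3}. With this change, the rest of your argument goes through for all $\beta\in(0,\alpha_3/4]$.
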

    
\begin{proof} 

  We  omit the dependence on $x_0$ from the notation. Let us first choose parameters
    \begin{equation}\label{def:decaytauep}
        \tau:=\min\left\{\left(\frac{1}{2c^*}\right)^\frac{1}{1-\beta},\left(\frac{1}{16}\right)^\frac{1}{1-\beta}\right\}<\frac18\quad\text{ and }\quad\varepsilon:=\frac{\tau^{1+n+\beta}}{2c^*}
    \end{equation}
    where $c^*>0$ is a constant to be determined later. With this $\varepsilon$, we determine $\delta_1$ and $\delta_2$ from Lemma \ref{lem:ndegexcessdecay0}, and then choose
    \begin{equation}\label{def:delta56}
        \delta_5:=\min\left\{\delta_1,\frac{1}{8(1+\tau^{-n})},\frac{(\sqrt{2}-1)^2(1-\tau^\beta)^2\tau^n}{2}\right\}\quad\text{ and }\quad\delta_6:=\min\{\delta_2,\delta_5\}.
    \end{equation}
    
   Now,  we prove  the following inequalities by induction on $k\in\mathbb{N}_0$:
    \begin{equation}\label{eq:itid1}
        \fint_{B_{\tau^kR}}\left|V_k(Du)-(V_k(Du))_{\tau^kR}\right|^2\, dx\leq\tau^{2\beta k}\delta_5\fint_{B_{\tau^kR}}\left|V_k(Du)\right|^2\, dx
    \end{equation}
    \begin{equation}\label{eq:itid2}
        \begin{aligned}
            &\fint_{B_{\tau^kR}}\left|V_k(Du)-(V_k(Du))_{\tau^kR}\right|^2\, dx\\
            &\leq\tau^{(1+\beta)k}\fint_{B_{R}}\left|V_0(Du)-(V_0(Du))_{R}\right|^2\, dx+\frac{1-\tau^{(1-\beta)k}}{1-\tau^{1-\beta}}\left(\tau^kR\right)^{2\beta}\fint_{B_{R}}\left|V_0(Du)\right|^2\, dx
        \end{aligned}
    \end{equation}
    \begin{equation}\label{eq:itid3}
        \fint_{B_{\tau^kR}}\left|V_k(Du)\right|^2\, dx\leq2\fint_{B_{R}}\left|V_0(Du)\right|^2\, dx
    \end{equation}
    where $V_k:=V_{H_{\tau^kR}^-}$. We use the notations (\ref{eq:itid1})$_k$, (\ref{eq:itid2})$_k$, and (\ref{eq:itid3})$_k$ to denote the inequalities  (\ref{eq:itid1}), (\ref{eq:itid2}), and (\ref{eq:itid3}) with the specific $k\in\mathbb{N}_0$, respectively.  We first notice that these three inequalities hold for the case $k=0$ by the given conditions of the lemma.
    
    Assume (\ref{eq:itid1})$_h$, (\ref{eq:itid2})$_h$, and (\ref{eq:itid3})$_h$ hold for $h=0,1,2,\dots,k-1$ for some $k\in\mathbb{N}$. By (\ref{def:delta56}), the inequalities (\ref{eq:itid1})$_{k-1}$ and (\ref{eq:smallradius}) imply (\ref{eq:smalldecaycondition}) and (\ref{eq:smallradiuscondition}) respectively, with $r=\frac{\tau^{k-1}R}{2}$. Therefore applying Lemma \ref{lem:ndegexcessdecay0}, with $r=\frac{\tau^{k-1}R}{2}$ and $\tau$ replaced by $2\tau <\frac14$, and Jensen's inequality, we find
    \begin{equation*}
        \begin{aligned}
            &\fint_{B_{\tau^kR}}|V_k(Du)-(V_k(Du))_{\tau^kR}|^2\, dx\\
            &\leq c^*\tau^2\left(1+\frac{\varepsilon}{\tau^{n+2}}\right)\Bigg(\fint_{B_{\tau^{k-1}R}}|V_{k-1}(Du)-(V_{k-1}(Du))_{\tau^{k-1}R}|^2\, dx\\
            &\qquad\qquad\qquad\qquad\qquad\qquad\qquad+(\tau^{k-1}R)^\frac{\alpha_3}{2}\fint_{B_{\tau^{k-1}R}}|V_{k-1}(Du)|^2\, dx\Bigg)
        \end{aligned}
    \end{equation*}
    for some $c^*>0$. By (\ref{def:decaytauep}), $c^*\tau^{1-\beta}\leq\frac{1}{2}$ and $c^*\tau^{-\beta-n-2}\varepsilon=\frac{1}{2}$, we have
    \begin{equation*}
        c^*\tau^2\left(1+\frac{\varepsilon}{\tau^{n+2}}\right)=\tau^{1+\beta}(c^*\tau^{1-\beta}+c^*\tau^{-\beta-n-2}\varepsilon)\leq\tau^{1+\beta}. 
    \end{equation*}
    Thus since $\beta\le \frac{\alpha_3}{4}$, $\delta_6\leq\delta_5$ and $\tau^{1-\beta}\leq\frac{1}{16}$, using (\ref{eq:itid1})$_{k-1}$ and (\ref{eq:smallradius}), we see that
    \begin{equation}\label{eq:excessdecayinduction1}
        \begin{aligned}
            &\fint_{B_{\tau^kR}}|V_k(Du)-(V_k(Du))_{\tau^kR}|^2\, dx\\
            &\leq \tau^{1+\beta}\Bigg(\fint_{B_{\tau^{k-1}R}}|V_{k-1}(Du)-(V_{k-1}(Du))_{\tau^{k-1}R}|^2\, dx\\
            &\qquad\qquad\qquad\qquad+(\tau^{k-1}R)^\frac{\alpha_3}{2}\fint_{B_{\tau^{k-1}R}}|V_{k-1}(Du)|^2\, dx\Bigg)\\
            &\leq \tau^{1-\beta}\tau^{2\beta}\left(\tau^{2\beta(k-1)}\delta_5\fint_{B_{\tau^{k-1}R}}\left|V_{k-1}(Du)\right|^2\, dx+\tau^{2\beta(k-1)}\delta_6\fint_{B_{\tau^{k-1}R}}|V_{k-1}(Du)|^2\, dx\right)\\
            &\leq \frac{1}{8}\tau^{2\beta k}\delta_5\fint_{B_{\tau^{k-1}R}}\left|V_{k-1}(Du)\right|^2\, dx.
        \end{aligned}
    \end{equation}
    
    Moreover, (\ref{eq:itid1})$_{k-1}$ with the fact that $4(1+\tau^{-n})\delta_5\leq\frac{1}{2}$ from (\ref{def:delta56}) gives
    \begin{equation*}
        \begin{aligned}
            &\fint_{B_{\tau^{k-1}R}}\left|V_{k-1}(Du)\right|^2\, dx\\
            &\leq 4\fint_{B_{\tau^{k-1}R}}|V_{k-1}(Du)-(V_{k-1}(Du))_{\tau^{k-1}R}|^2\, dx\\
            &\qquad+4|(V_{k-1}(Du))_{\tau^{k-1}R}-(V_{k-1}(Du))_{\tau^{k}R}|^2+4\fint_{B_{\tau^{k}R}}\left|V_{k-1}(Du)\right|^2\, dx\\
            &\leq 4(1+\tau^{-n})\fint_{B_{\tau^{k-1}R}}|V_{k-1}(Du)-(V_{k-1}(Du))_{\tau^{k-1}R}|^2\, dx +4\fint_{B_{\tau^{k}R}}\left|V_{k-1}(Du)\right|^2\, dx\\
            &\leq 4(1+\tau^{-n})\delta_5\fint_{B_{\tau^{k-1}R}}|V_{k-1}(Du)|^2\, dx+4\fint_{B_{\tau^{k}R}}\left|V_{k-1}(Du)\right|^2\, dx\\
            &\leq \frac{1}{2}\fint_{B_{\tau^{k-1}R}}|V_{k-1}(Du)|^2\, dx+4\fint_{B_{\tau^{k}R}}\left|V_{k-1}(Du)\right|^2\, dx
        \end{aligned}
    \end{equation*}
    which implies that
    \begin{equation*}
        \fint_{B_{\tau^{k-1}R}}\left|V_{k-1}(Du)\right|^2\, dx\leq8\fint_{B_{\tau^{k}R}}\left|V_{k-1}(Du)\right|^2\, dx\leq8\fint_{B_{\tau^{k}R}}\left|V_{k}(Du)\right|^2\, dx,
    \end{equation*}
    where in the last inequality, we used the fact that $a_{\tau^{k-1}R}^-\leq a_{\tau^{k}R}^-$. Inserting this into (\ref{eq:excessdecayinduction1}), we have (\ref{eq:itid1})$_k$.\\
    We next prove (\ref{eq:itid2})$_k$ by using the first inequality of (\ref{eq:excessdecayinduction1}),  (\ref{eq:itid2})$_{k-1}$ and  (\ref{eq:itid3})$_{k-1}$ 
    \begin{equation*}
        \begin{aligned}
            &\fint_{B_{\tau^kR}}|V_k(Du)-(V_k(Du))_{\tau^kR}|^2\, dx\\
            &\leq \tau^{1+\beta}\Bigg(\fint_{B_{\tau^{k-1}R}}|V_{k-1}(Du)-(V_{k-1}(Du))_{\tau^{k-1}R}|^2\, dx\\
            &\qquad\qquad\qquad\qquad\qquad\qquad
            +(\tau^{k-1}R)^\frac{\alpha_3}{2}\fint_{B_{\tau^{k-1}R}}|V_{k-1}(Du)|^2\, dx\Bigg)\\
            &\leq \tau^{1+\beta}\fint_{B_{\tau^{k-1}R}}|V_{k-1}(Du)-(V_{k-1}(Du))_{\tau^{k-1}R}|^2\, dx\\
            &\qquad +\tau^{1-\beta}(\tau^kR)^{2\beta}\fint_{B_{\tau^{k-1}R}}|V_{k-1}(Du)|^2\, dx\\
            &\leq \tau^{(1+\beta)k}\fint_{B_{R}}|V_{0}(Du)-(V_{0}(Du))_{R}|^2\, dx\\
            &\quad +\tau^{1+\beta}\frac{1-\tau^{(1-\beta)(k-1)}}{1-\tau^{1-\beta}}(\tau^{k-1}R)^{2\beta}\fint_{B_R}|V_0(Du)|^2\, dx+(\tau^kR)^{2\beta}\fint_{B_R}|V_0(Du)|^2\, dx\\
            &\leq \tau^{(1+\beta)k}\fint_{B_{R}}|V_{0}(Du)-(V_{0}(Du))_{R}|^2\, dx+\frac{1-\tau^{(1-\beta)k}}{1-\tau^{1-\beta}}(\tau^{k}R)^{2\beta}\fint_{B_R}|V_0(Du)|^2\, dx,
        \end{aligned}
    \end{equation*}
    where we used the inequalities $\beta\le\frac{\alpha_3}{4}$ and that $2\tau^{1-\beta}<1$.
    
     Finally, (\ref{eq:itid1})$_h$ and (\ref{eq:itid3})$_h$, for $h=0,1,2,\dots,k-1$, and the fact that $\tau^{-\frac{n}{2}}(2\delta_5)^\frac{1}{2}\frac{1}{1-\tau^\beta}\leq\sqrt{2}-1$ from (\ref{def:delta56}) imply (\ref{eq:itid3})$_k$ as follows:
     \begin{equation*}
         \begin{aligned}
             &\left(\fint_{B_{\tau^kR}}|V_k(Du)|^2\, dx\right)^\frac{1}{2}\\
             &\leq\tau^{-\frac{n}{2}}\sum_{h=0}^{k-1}\left(\fint_{B_{\tau^hR}}|V_{h+1}(Du)-(V_h(Du))_{\tau^hR}|^2\, dx\right)^\frac{1}{2}+\left(\fint_{B_{R}}|V_0(Du)|^2\, dx\right)^\frac{1}{2}\\
             &\leq \tau^{-\frac{n}{2}}\delta_5^\frac{1}{2}\sum_{h=0}^{k-1}\tau^{\beta h}\left(\fint_{B_{\tau^hR}}|V_{h}(Du)|^2\, dx\right)^\frac{1}{2}+\left(\fint_{B_{R}}|V_0(Du)|^2\, dx\right)^\frac{1}{2}\\
             &\leq \left(\tau^{-\frac{n}{2}}(2\delta_5)^\frac{1}{2}\frac{1}{1-\tau^\beta}+1\right)\left(\fint_{B_{R}}|V_0(Du)|^2\, dx\right)^\frac{1}{2}\\
             &\leq \left(2\fint_{B_{R}}|V_0(Du)|^2\, dx\right)^\frac{1}{2}.
         \end{aligned}
     \end{equation*}

 We have proved the inequalities (\ref{eq:itid1}), (\ref{eq:itid2}), and (\ref{eq:itid3}). As the last step of the proof, let $r\in(0,\frac{R}{2})$ satisfy $\tau^{k+1}R\leq 2r<\tau^kR$ for some $k\geq0$. Then, since
     \begin{equation*}
         \begin{aligned}
             \fint_{B_r}|V_r^-(Du)-V_k(Du)|^2\, dx
             &\leq c\fint_{B_r}(a_r^--a_{\tau^kR}^-)|Du|^q\, dx\\
             &\leq c\fint_{B_r}(a(x)-a_{\tau^kR}^-)|Du|^q\, dx\\
             &\leq c\left(\frac{\tau^kR}{2r}\right)^n\fint_{B_{\frac{1}{2}\tau^kR}}(a(x)-a_{\tau^kR}^-)|Du|^q\, dx\\
             &\leq c\tau^{-n}(\tau^kR)^{\alpha_0}\fint_{B_{\tau^kR}}|V_k(Du)|^2\, dx,
         \end{aligned}
     \end{equation*}
     where we used (\ref{eq:Vdiff}) and (\ref{eq:qintegrabilityq}), we find
     \begin{equation*}
         \begin{aligned}
             &\fint_{B_r}|V_r^-(Du)-(V_r^-(Du))_r|^2\, dx\\
             &\leq \fint_{B_r}|V_r^-(Du)-(V_k(Du))_{\tau^kR}|^2\, dx\\
             &\leq 2\fint_{B_r}|V_r^-(Du)-V_k(Du)|^2\, dx+2\fint_{B_r}|V_k(Du)-(V_k(Du))_{\tau^kR}|^2\, dx\\
             &\le c \underbrace{\tau^{-n}(\tau^kR)^{\alpha_0}\fint_{B_{\tau^kR}}|V_k(Du)|^2\, dx}_{=:I_1}+c\underbrace{\tau^{-n}\fint_{B_{\tau^kR}}|V_k(Du)-(V_k(Du))_{\tau^kR}|^2\, dx}_{=:I_2}.
         \end{aligned}
     \end{equation*}
     Since $\tau^{k+1}R<2r<1$, we use (\ref{eq:itid3}) to have
     \begin{equation*}
     \begin{aligned}
         I_1<\tau^{-n-\alpha_0}(\tau^{k+1}R)^\frac{\alpha_0}{2}\fint_{B_{\tau^kR}}|V_k(Du)|^2\, dx&\leq c\tau^{-n-\alpha_0}r^\frac{\alpha_0}{2}\fint_{B_{\tau^kR}}|V_k(Du)|^2\, dx\\
         &\leq c\tau^{-n-\alpha_0}r^\frac{\alpha_0}{2}\fint_{B_{R}}|V_0(Du)|^2\, dx.
     \end{aligned}
     \end{equation*}
     On the other hand, by (\ref{eq:itid2}) and using the fact that $1+\beta>2\beta$, we have
     \begin{equation*}
         \begin{aligned}
             I_2&\leq\tau^{-n+(1+\beta)k}\fint_{B_R}|V_0(Du)-(V_0(Du))_R|^2\, dx\\
             &\qquad+\tau^{-n}\frac{1-\tau^{(1-\beta)k}}{1-\tau^{1-\beta}}(\tau^kR)^{2\beta}\fint_{B_R}|V_0(Du)|^2\, dx\\
             &\leq c\tau^{-n-1-\beta}\left(\frac{r}{R}\right)^{2\beta}\fint_{B_R}|V_0(Du)-(V_0(Du))_R|^2\, dx\\
             &\qquad+c\frac{\tau^{-n}}{1-\tau^{1-\beta}}\left(\frac{r}{\tau}\right)^{2\beta}\fint_{B_R}|V_0(Du)|^2\, dx.
         \end{aligned}
     \end{equation*}
     Since $2\beta\le \frac{\alpha_3}{2}$ and $\tau$ is a fixed constant given in \eqref{def:decaytauep}, we finally have (\ref{eq:ndegexcessdecay}).
\end{proof}

Finally, we prove our main theorem.

\subsection*{Proof of Theorem~\ref{mnthm}}\ \\
     Let $\beta:=\min\left\{\frac{\gamma_0}{2},\frac{\alpha_3}{4}\right\}$, where $\gamma_0$ and $\alpha_3$ are given in Lemma~\ref{lem:gamma0} and \eqref{alpha3}, respectively. Applying Lemma \ref{lem:iteration} with this $\beta$, we obtain $\delta_5$ and $\delta_6$. We then choose $\chi=\delta_5$ and $\gamma=\beta$ in Lemma~\ref{lem:degexcessdecay0}, hence we also determine $\delta_3$, $\delta_4$ and $\tau$.

     Now choose any $x_1\in\Omega$ satisfying
     \begin{equation*}
         \liminf_{r\to0+}\fint_{B_r(x_1)}\left|V_{H_{B_r(x_1)}^-}(Du)-(V_{H_{B_r(x_1)}^-}(Du))_{x_1,r}\right|^2\, dx=0
     \end{equation*}
     and
     \begin{equation*}
         M:=\limsup_{r\to0+}\fint_{B_r(x_1)}\left|V_{H_{B_r(x_1)}^-}(Du)\right|^2\, dx<+\infty.
     \end{equation*}
     Fix $\Omega'\Subset\Omega$ such that $x_1\in\Omega'$. Then there exists $r_0\in(0,\frac{1}{4})$ such that (\ref{eq:smallness}) holds for every $r\in(0,r_0)$. Moreover, we may find $R_0\in(0,\frac{r_0}{2})$ such that $B_{2R_0}(x_1)\subset\Omega'$ with
     \begin{equation}\label{eq:R0condition}
         \begin{dcases}
             R_0^\frac{\alpha_3}{2}\leq\min\left\{\frac{\delta_3}{4(M+1)},\delta_6,\delta_4\right\},\\
             \fint_{B_{R_0}(x_1)}\left|V_{H_{B_{R_0}(x_1)}^-}(Du)-(V_{H_{B_{R_0}(x_1)}^-}(Du))_{x_1,R_0}\right|^2\, dx\leq\frac{\delta_3}{4},\\
             \fint_{B_{R_0}(x_1)}\left|V_{H_{B_{R_0}(x_1)}^-}(Du)\right|^2\, dx\leq M+1.
         \end{dcases}
     \end{equation}
     Since the integrals are continuous with respect to the translation of the domain of integration, there exists $R_1\in(0,R_0)$ such that for every $x_0\in B_{R_1}(x_1)$, we have
     \begin{equation}\label{eq:R1condition}
         \begin{dcases}
             \fint_{B_{R_0}(x_0)}\left|V_{H_{B_{R_0}(x_0)}^-}(Du)-(V_{H_{B_{R_0}(x_0)}^-}(Du))_{x_0,R_0}\right|^2\, dx\leq\frac{\delta_3}{2}\\
             \fint_{B_{R_0}(x_0)}\left|V_{H_{B_{R_0}(x_0)}^-}(Du)\right|^2\, dx\leq 2(M+1).
         \end{dcases}
     \end{equation}
     Now we fix an arbitrary $x_0\in B_{R_1}(x_1)$ and write $V_k:=V_{H_{B_{\tau^kR_0}(x_0)}^-}$. In what follows, we omit $x_0$ and write $H_\rho^\pm:=H_{B_\rho(x_0)}^\pm$ for $\rho>0$.

     We first suppose that 
     \begin{equation}\label{eq:degsupposure}
         \delta_5\fint_{B_{\tau^kR_0}}|V_k(Du)|^2\, dx\leq\fint_{B_{\tau^kR_0}}|V_k(Du)-(V_k(Du))_{\tau^kR_0}|^2\, dx\quad\text{ for all }k\geq0.
     \end{equation}
     In view of (\ref{eq:R0condition}) and (\ref{eq:R1condition}), we may use Lemma \ref{lem:degexcessdecay0} to find
     \begin{equation}\label{eq:degbound}
         \begin{aligned}
             &\fint_{B_{\tau^kR_0}}|V_k(Du)-(V_k(Du))_{\tau^kR_0}|^2\, dx\\
             &\leq\tau^{2\beta}\fint_{B_{\tau^{k-1}R_0}}|V_{k-1}(Du)-(V_{k-1}(Du))_{\tau^{k-1}R_0}|^2\, dx\\
             &\leq\cdots\leq\tau^{2k\beta}\fint_{B_{R_0}}|V_0(Du)-(V_0(Du))_{R_0}|^2\, dx\leq\tau^{2k\beta}\frac{\delta_3}{2}
         \end{aligned}
     \end{equation}
     holds for every $k\in\mathbb{N}_0$. Let $r\in(0,\frac{R_0}{2})$ be given and $k\in\mathbb{N}_0$ satisfy $\tau^{k+1}R_0\leq 2r<\tau^kR_0$. Then since
     \begin{equation*}
         \begin{aligned}
             \fint_{B_r}|V_{H_r^-}(Du)-V_k(Du)|^2\, dx&\leq c\fint_{B_r}(a_r^--a_{\tau^kR_0}^-)|Du|^q\, dx\\
             &\leq c\fint_{B_r}(a(x)-a_{\tau^kR_0}^-)|Du|^q\, dx\\
             &\leq c\left(\frac{\tau^kR_0}{2r}\right)^n\fint_{B_{\frac{1}{2}\tau^kR_0}}(a(x)-a_{\tau^kR_0}^-)|Du|^q\, dx\\
             &\leq c\tau^{-n}(\tau^kR_0)^{\alpha_0}\fint_{B_{\tau^kR_0}}|V_k(Du)|^2\, dx\\
             &\leq c\tau^{-n}\fint_{B_{\tau^kR_0}}|V_k(Du)|^2\, dx,
         \end{aligned}
     \end{equation*}
     we have, with (\ref{eq:degsupposure}) and (\ref{eq:degbound}),
     \begin{equation*}
         \begin{aligned}
             &\fint_{B_r}|V_{H_r^-}(Du)-(V_{H_r^-}(Du))_r|^2\, dx\\
             &\leq\fint_{B_r}|V_{H_r^-}(Du)-(V_k(Du))_{\tau^kR_0}|^2\, dx\\
             &\leq2\fint_{B_r}|V_{H_r^-}(Du)-V_k(Du)|^2\, dx+2\fint_{B_r}|V_k(Du)-(V_k(Du))_{\tau^kR_0}|^2\, dx\\
             &\leq c\tau^{-n}\fint_{B_{\tau^kR_0}}|V_k(Du)|^2\, dx+c\tau^{-n}\fint_{B_{\tau^kR_0}}|V_k(Du)-(V_k(Du))_{\tau^kR_0}|^2\, dx\\
             &\leq c(\delta_5^{-1}+1)\tau^{-n}\fint_{B_{\tau^kR_0}}|V_k(Du)-(V_k(Du))_{\tau^kR_0}|^2\, dx\\
             &\leq c\delta_3(\delta_5^{-1}+1)\tau^{-n}\tau^{2k\beta}\leq c\delta_3(\delta_5^{-1}+1)\tau^{-n}\left(\frac{r}{\tau R_0}\right)^{2\beta}.
         \end{aligned}
     \end{equation*}
     Therefore we obtain
     \begin{equation}\label{eq:degresult}
         \fint_{B_r}\frac{|V_{H_r^-}(Du)-(V_{H_r^-}(Du))_r|^2}{r^{2\beta}}\, dx\leq c\frac{\delta_3(\delta_5^{-1}+1)}{\tau^{n+2\beta}R_0^{2\beta}}.
     \end{equation}

     We next suppose (\ref{eq:degsupposure}) fails. Then there is $k_0\in\mathbb{N}_0$ such that 
     \begin{equation}\label{eq:ndegfails}
         \delta_5\fint_{B_{\tau^kR_0}}|V_k(Du)|^2\, dx\leq\fint_{B_{\tau^kR_0}}|V_k(Du)-(V_k(Du))_{\tau^kR_0}|^2\, dx
     \end{equation}
     for every $k=0,\dots,k_0-1$ (for $k_0=0$, we neglect this condition) and 
     \begin{equation*}
         \fint_{B_{\tau^{k_0}R_0}}|V_{k_0}(Du)-(V_{k_0}(Du))_{\tau^{k_0}R_0}|^2\, dx<\delta_5\fint_{B_{\tau^{k_0}R_0}}|V_{k_0}(Du)|^2\, dx.
     \end{equation*}
     If $k_0=0$, Lemma \ref{lem:iteration} and (\ref{eq:R1condition}) are applied to find that for any $r\in(0,\frac{R_0}{2})$,
     \begin{equation*}
         \begin{aligned}
             \fint_{B_r}&|V_{H_r^-}(Du)-(V_{H_r^-}(Du))_r|^2\, dx\\
             &\leq c\left(\frac{r}{R_0}\right)^{2\beta}\fint_{B_{R_0}}|V_0(Du)-(V_0(Du))_0|^2\, dx+cr^{2\beta}\fint_{B_{R_0}}|V_0(Du)|^2\, dx\\
             &\leq c\delta_3\left(\frac{r}{R_0}\right)^{2\beta}+cr^{2\beta}(M+1),
         \end{aligned}
     \end{equation*}
     and hence
     \begin{equation*}
         \fint_{B_r}\frac{|V_{H_r^-}(Du)-(V_{H_r^-}(Du))_r|^2}{r^{2\beta}}\, dx\leq c\left(\frac{\delta_3}{R_0^{2\beta}}+M+1\right).
     \end{equation*}
     Now we consider the case when $k_0\geq1$. Note that if $2r\in[\tau^{k_0}R_0,R_0)$, by (\ref{eq:ndegfails}), we have (\ref{eq:degresult}). If $2r\in(0,\tau^{k_0}R_0)$, by Lemma \ref{lem:iteration} with $R=\tau^{k_0}R_0$,
     \begin{equation*}
         \begin{aligned}
             \fint_{B_r}&|V_{H_r^-}(Du)-(V_{H_r^-}(Du))_r|^2\, dx\\
             &\leq c\left(\frac{r}{\tau^{k_0}R_0}\right)^{2\beta}\fint_{B_{\tau^{k_0}R_0}}|V_{k_0}(Du)-(V_{k_0}(Du))_{\tau^{k_0}R_0}|^2\, dx+cr^{2\beta}\fint_{B_{\tau^{k_0}R_0}}|V_{k_0}(Du)|^2\, dx\\
             &\leq c\frac{\delta_3}{\tau^{n+2\beta}}\left(\frac{r}{ R_0}\right)^{2\beta}(\delta_5^{-1}+1)+cr^{2\beta}\fint_{B_{\tau^{k_0}R_0}}|V_{k_0}(Du)|^2\, dx,
         \end{aligned}
     \end{equation*}
     where in the last inequality, we used (\ref{eq:degresult}) with $r=\tau^{k_0}R_0$. Moreover,
     by (\ref{eq:Vdiff},) (\ref{eq:qintegrabilityq}), and (\ref{eq:Vsquare}), with the inequality $\tau^{k_0-1}R_0<1$,
     \begin{equation*}
         \begin{aligned}
             \fint_{B_{\tau^{k_0}R_0}}|V_{k_0}(Du)-V_{k_0-1}(Du)|^2\, dx&\leq c\fint_{B_{\tau^{k_0}R_0}}(a_{\tau^{k_0}R_0}^--a_{\tau^{k_0-1}R_0}^-)|Du|^q\, dx\\
             &\leq c\tau^{-n}\fint_{B_{\frac{1}{2}\tau^{k_0-1}R_0}}(a(x)-a_{\tau^{k_0-1}R_0}^-)|Du|^q\, dx\\
             &\leq c\tau^{-n}(\tau^{k_0-1}R_0)^{\alpha_0}\fint_{B_{\tau^{k_0-1}R_0}}|V_{k_0-1}(Du)|^2\, dx\\
             &\leq c\tau^{-n}\fint_{B_{\tau^{k_0-1}R_0}}|V_{k_0-1}(Du)|^2\, dx.
         \end{aligned}
     \end{equation*}
     Using (\ref{eq:ndegfails}) and (\ref{eq:degbound}) with $k=k_0-1$, this further implies that
     \begin{equation*}
         \begin{aligned}
             \fint_{B_{\tau^{k_0}R_0}}|V_{k_0}(Du)|^2\, dx&\leq2\fint_{B_{\tau^{k_0}R_0}}|V_{k_0}(Du)-V_{k_0-1}(Du)|^2\, dx+2\fint_{B_{\tau^{k_0}R_0}}|V_{k_0-1}(Du)|^2\, dx\\
             &\leq c\tau^{-n}\fint_{B_{\tau^{k_0-1}R_0}}|V_{k_0-1}(Du)|^2\, dx\\
             &\leq c\tau^{-n}\delta_5^{-1}\fint_{B_{\tau^{k_0-1}R_0}}|V_{k_0-1}(Du)-(V_{k_0-1}(Du))_{\tau^{k_0-1}R_0}|^2\, dx\\
             &\leq c\tau^{-n}\delta_5^{-1}\delta_3.
         \end{aligned}
     \end{equation*}
     Therefore for any $r\in(0,R_0)$, we have
     \begin{equation*}
         \fint_{B_r}\frac{|V_{H_r^-}(Du)-(V_{H_r^-}(Du))_r|^2}{r^{2\beta}}\, dx\leq c\frac{\delta_3}{\tau^{n+2\beta}R_0^{2\beta}}(\delta_5^{-1}+1)+c\tau^{-n}\delta_5^{-1}\delta_3.
     \end{equation*}

     Consequently, we have the inequality
     \begin{equation*}
         \fint_{B_r(x_0)}\frac{|V_{H_{B_r(x_0)}^-}(Du)-(V_{H_{B_r(x_0)}^-}(Du))_{x_0,r}|^2}{r^{2\beta}}\, dx\leq C
     \end{equation*}
     for every ball $B_r(x_0)$ with $x_0\in B_{R_1}(x_1) $ and any $r\in(0, R_0)$. We further notice that from (\ref{eq:Vtoshift}) and (\ref{eq:shiftislinear}),
     \begin{equation*}
         \begin{aligned}
             |V_{H_r^-}(P_1)-V_{H_r^-}(P_2)|^2&\sim H_{|P_2|}(x_r^-,|P_1-P_2|)\\
             &\gtrsim |V_p(P_1)-V_p(P_2)|^2+a_r^-|V_q(P_1)-V_q(P_2)|^2\\
             &\geq |V_p(P_1)-V_p(P_2)|^2,
         \end{aligned}
     \end{equation*}
     which, with (\ref{eq:Vmean}) and the previous consequence, implies
     \begin{equation*}
         \fint_{B_r(x_0)}\frac{|V_{p}(Du)-(V_{p}(Du))_{x_0,r}|^2}{r^{2\beta}}\, dx\leq C.
     \end{equation*}
     Therefore, we finally have that $V_p(Du)\in C^{0,\beta}(B_{R_1}(x_1),\R^{N\times n})$, which implies that $Du\in C^{0,\tilde \beta}(B_{R_1}(x_1),\R^{N\times n})$ for some $\tilde \beta\in (0,1)$, see, e.g. \cite[Lemma 2.10]{DSV09}. This completes the proof. \qed

\section*{Acknowledgement}

J. Ok was supported by the
National Research Foundation of Korea funded by the Korean Government (NRF-2022R1C1C1004523).

\subsection*{Conflicts of Interest}  The authors declare no conflict of interest.

\subsection*{Data availability}
No data was used for the research described in the article.


\begin{thebibliography}{99}

\bibitem{A68}
F. J. Almgren:
Existence and regularity almost everywhere of solutions to elliptic variational problems among surfaces of varying topological type and singularity structure,
Ann. of Math. (2) 87 (1968) 321–391.

\bibitem{AF87}
E. Acerbi and N. Fusco:
A regularity theorem for minimizers of quasiconvex integrals,
Arch. Rational Mech. Anal. 99 (1987), no. 3, 261–281.


\bibitem{B12}
V. B\"ogelein:
Partial regularity for minimizers of discontinuous quasi-convex integrals with degeneracy,
J. Differential Equations 252 (2) (2012), 1052–1100.


\bibitem{BCM18}
P. Baroni, M. Colombo and G. Mingione:
Regularity for general functionals with double phase,
Calc. Var. Partial Differ.Equ. 57 (2) (2018) 62.


\bibitem{CM15.1}
M. Colombo and G. Mingione:
Regularity for double phase variational problems,
Arch. Ration. Mech. Anal. 215 (2)(2015) 443--496.

\bibitem{CM15.2}
M. Colombo and G. Mingione:
Bounded minimisers of double phase variational integrals,
Arch. Ration. Mech. Anal.218 (1) (2015) 219--273.


\bibitem{CO20}
P. Celada and J. Ok:
Partial regularity for non-autonomous degenerate quasi-convex functionals with general growth,
Nonlinear Anal. 194 (2020) 111473.

\bibitem{G61}
E. De Giorgi:
Frontiere orientate di misura minima, Seminario di Matematica della Scuola Normale Superiore di Pisa, 1960-61
Editrice Tecnico Scientifica, Pisa 1961 57 pp


\bibitem{DGG00}
F. Duzaar, A. Gastel and J. Grotowski:
Partial regularity for almost minimizers of quasiconvex functionals,
SIAM J. Math. Anal. 32 (2000) 665–687.

\bibitem{DG00}
F. Duzaar and J. Grotowski:
Optimal interior partial regularity for nonlinear elliptic systems: the method of A-harmonic approximation,
Manuscripta Math. 103 (2000) 267–298.

\bibitem{DS02}
F. Duzaar and K. Steffen:
Optimal interior and boundary regularity for almost minimizers to elliptic variational integrals, 
J. Reine Angew. Math. 546 (2002) 73–138.

\bibitem{DK02}
F. Duzaar and M. Kronz:
Regularity of $\omega$-minimizers of quasi-convex variational integrals with polynomial growth, 
Differential Geom. Appl. 17 (2002),no. 2-3, 139–152.

\bibitem{DM04}
F. Duzaar and G. Mingione:
The $p$-harmonic approximation and the regularity of $p$-harmonic maps,
Calc. Var. Partial Differential Equations, 20 (2004), pp. 235–256.

\bibitem{DM041}
F. Duzaar and G. Mingione:
Regularity for degenerate elliptic problems via $p$-harmonic approximation,
Ann. Inst. H. Poincaré C Anal. Non Linéaire 21 (2004), no. 5, 735--766.



\bibitem{DE08}
L. Diening and F. Ettwein:
Fractional estimates for non-differentiable elliptic systems with general growth,
Forum Math. 20 (3) (2008) 523--556.

\bibitem{DSV09}
L. Diening, B. Stroffolini and A. Verde:
Everywhere regularity of functionals with $\varphi$-growth,
Manuscr. Math. 129 (4)(2009) 449--481.

\bibitem{DLSV12}
L. Diening, D. Lengeler, B. Stroffolini and A. Verde:
Partial regularity for minimizers of quasiconvex functionals with general growth,
SIAM J. Math. Anal. 44 (5) (2012) 3594--3616.

\bibitem{DSV12}
L. Diening, B. Stroffolini, A. Verde, The $\phi$-harmonic approximation and the regularity of $\phi$-harmonic maps,
J. Differ. Equ. 253 (7) (2012) 1943--1958.

\bibitem{DKS12}
L. Diening, P. Kaplický and S. Schwarzacher:
BMO estimates for the p-Laplacian, Nonlinear Anal. 75 (2) (2012) 637--650.

\bibitem{D19}
C. De Filippis:
On the regularity of the $\omega$-minima of $\varphi$-functionals,
Nonlinear Anal. 194 (2020), 111464, 25 pp.

\bibitem{ELM04}
L. Esposito, F. Leonetti and G. Mingione:
Sharp regularity for functionals with $(p,q)$ growth,
J. Differential Equations 204 (2004), no. 1, 5--55.

\bibitem{E86}
L. C. Evans:
Quasiconvexity and partial regularity in the calculus of variations,
Arch. Rational Mech. Anal. 95 (1986), no. 3, 227–252.

\bibitem{FMM04}
I. Fonseca, J. Mal\'y, G. Mingione:
Scalar minimizers with fractal singular sets,
Arch. Ration. Mech. Anal. 172 (2) (2004) 295--307.

\bibitem{FM08}
M. Foss and G. Mingione:
Partial continuity for elliptic problems,
Ann. Inst. H. Poincar\'e Anal. Non Lin\'eaire 25 (2008), no. 3, 471--503.

\bibitem{Gh73}
F. W. Gehring:
The $L_p$-integrability of the partial derivatives of a quasiconformal mapping,
Acta Math. 130 (1973) 265--277.

\bibitem{GM68}
E. Giusti and M. Miranda:
Sulla regolaritá delle soluzioni deboli di una classe di sistemi ellittici quasi-lineari, 
Arch. Ration. Mech. Anal. 31 (1968/1969) 173–184.

\bibitem{Gia83}
M. Giaquinta:
multiple integrals in the calculus of variations and nonlinear elliptic systems,
Princeton University Press, Princeton (1983).

\bibitem{G02}
J.F. Grotowski:
Boundary regularity for nonlinear elliptic systems,
Calc. Var. 15 (2002) 353–388.

\bibitem{Giusti} E.\ Giusti: 
\emph{Direct Methods in the Calculus of Variations}, 
World Scientific, Singapore, 2003.

\bibitem{G16}
F. Giannetti:
A $C^{1,\alpha}$ partial regularity result for integral functionals with $p(x)$-growth condition, Adv. Calc. Var. 9 (2016), no. 4, 395--407.

\bibitem{GPRT17}
F. Giannetti, A. Passarelli di Napoli, M.A. Ragusa and A. Tachikawa:
Atsushi Partial regularity for minimizers of a class of non autonomous functionals with nonstandard growth,
Calc. Var. Partial Differential Equations 56 (2017), no. 6, Art. 153, 29 pp.

\bibitem{GPT17}
F. Giannetti, A. Passarelli di Napoli and A. Tachikawa:
Atsushi Partial regularity results for non-autonomous functionals with $\Psi$-growth conditions
Ann. Mat. Pura Appl. (4) 196 (2017), no. 6, 2147–2165.

\bibitem{H13}
J. Habermann:
Partial regularity for nonlinear elliptic systems with continuous growth exponent,
Ann. Mat. Pura Appl. (4) 192 (2013), no. 3, 475–527.

\bibitem{HO23}
P. H\"ast\"or and J. Ok: 
 Regularity theory for non-autonomous problems with a  priori assumptions,
 Calc. Var. Partial Differential Equations 62 (2023), no. 6, Article No. 251p.

\bibitem{KM18}
T. Kuusi and G. Mingione:
Vectorial nonlinear potential theory,
J. Eur. Math. Soc. (JEMS) 20 (2018), no. 4, 929--1004.

\bibitem{M52}
C. B. Morrey:
Quasi-convexity and the lower semicontinuity of multiple integrals,
Pacific J. Math. 2 (1952) 25--53.

\bibitem{M67}
C. B. Morrey:
Partial regularity results for non-linear elliptic systems,
J. Math. Mech. 17 (1967/1968) 649--670.

\bibitem{N78}
I. Ne\v cas:
An example of a nonsmooth solution of a nonlinear elliptic system with analytic coecients and a smoothness condition,
Proceedings of an AllUnion Conference on Partial Differential Equations (Moscow State Univ.,Moscow,1976), pp. 174--177, Moskov. Gos. Univ., Mekh.-Mat. Fakul’tet, Moscow, 1978.

\bibitem{O16}
J. Ok:
Partial continuity for a class of elliptic systems with non-standard growth,
Electron. J. Differential Equations 2016, Paper No. 323, 24 pp.

\bibitem{O17}
J. Ok:
Regularity of $\omega$-minimizers for a class of functionals with non-standard growth,
Calc. Var. Partial Differ. Equ.56 (2017) 48.

\bibitem{OSS25}
J. Ok, G. Scilla and B. Stroffolini:
Partial regularity for degenerate systems of double phase type,
J. Differ.Equ. 432 (2025) 113207.

\bibitem{O18}
J. Ok:
Partial Hölder regularity for elliptic systems with non-standard growth,
J. Funct. Anal. 274 (3) (2018)723--768.

\bibitem{O18B}
J. Ok:
Partial regularity for general systems of double phase type with continuous coefficients,
Nonlinear Anal. 177 (2018), part B, 673–698.


\bibitem{S09}
T. Schmidt:
A simple partial regularity proof for minimizers of variational integrals,
NoDEA Nonlinear Differ. Eqns. Appl. 16(1). 109--129 (2009).

\bibitem{S83}
L. Simon:
Lectures on Geometric Measure Theory,
Proc. CMA, vol. 3, ANU Canberra, 1983.

\bibitem{S96}
L. Simon:
Theorems on Regularity and Singularity of Energy Minimizing Maps,
Birkhäuser-Verlag, Basel, 1996.

\bibitem{SY02}
V. Sver\'ak and X. Yan:
Non-Lipschitz minimizers of smooth uniformly convex functionals,
Proc. Natl. Acad. Sci. USA 99 (2002), no. 24, 15269–15276.

\bibitem{SS23}
G. Scilla and  B. Stroffolini:
Partial regularity for steady double phase fluids,
Math. Eng. 5 (5) (2023) 1--47.

\bibitem{U77}
K. Uhlenbeck:
Regularity for a class of non-linear elliptic systems,
Acta Math. 138 (3--4) (1977) 219--240.

\bibitem{Z86}
V. V. Zhikov:
Averaging of functionals of the calculus of variations and elasticity theory,
Izv. Akad. Nauk SSSR, Ser.Mat. 50 (4) (1986) 675--710, 877.

\bibitem{Z95}
V. V. Zhikov:
On Lavrentiev’s phenomenon,
Russ. J. Math. Phys. 3 (2) (1995) 249--269.

\bibitem{Z97}
V. V. Zhikov:
On some variational problems,
Russ. J. Math. Phys. 5 (1) (1997) 105--116 (1998).


\end{thebibliography}
\end{document}